\pgfplotsset{compat=newest}
\numberwithin{equation}{section}
\newcommand{\R}{\mathbb{R}}
\newcommand{\dd}{\mathrm{d}}
\newcommand{\DD}{\mathrm{d}}
\newcommand{\W}{\mathscr{W}} 
\newcommand{\U}{\mathscr{U}} 
\newcommand{\T}{\mathscr{T}}
\newcommand{\n}{\mathrm{n}}
\renewcommand{\r}{\mathrm{r}}
\newcommand{\p}{\mathrm{p}}
\newcommand{\g}{\mathrm{g}_\kappa}
\newcommand{\h}{\mathrm{h}}
\newcommand{\f}{\mathrm{f}}
\newcommand{\y}{\mathrm{y}}
\newcommand{\z}{\mathrm{z}}
\newcommand{\ud}{\, {\mathrm{d}}}
\newcommand{\eps}{\epsilon}
\newtheorem{theorem}{Theorem}[section]
\newtheorem{lemma}[theorem]{Lemma}
\newtheorem{definition}[theorem]{Definition}
\newtheorem{rmk}[theorem]{Remark}
\newtheorem{prop}[theorem]{Proposition}
\newtheorem{exo}[theorem]{Example}
\title{
Shock profiles for hydrodynamic models for fluid-particles flows in the flowing regime
}
\author[1]{Thierry Goudon}\address[1]{Universit\'e C\^ote d'Azur, Inria,  CNRS,\\
	LJAD, Parc Valrose, F-06108 Nice, France}
\author[2]{Pauline Lafitte}\address[2]{CentraleSup\'elec, F\'ed\'eration de Math\'ematiques FR CNRS 3487\\ 
	\& Labo.~MICS, F-91192 Gif-sur-Yvette, France}
\author[3]{Corrado Mascia}
\address[3]{Dipartimento di Matematica Guido Castelnuovo,\\
	Sapienza, University of Rome, Italy}
\begin{document}
\baselineskip16pt

\begin{abstract}
Starting from coupled fluid-kinetic equations for  the  modeling of laden flows, we derive relevant viscous corrections to be added to
asymptotic hydrodynamic systems, by means of Chapman-Enskog expansions and analyse the shock profile structure for such 
limiting systems.
Our main findings can be summarized as follows.
Firstly, we consider  simplified  models, which are intended to reproduce
the main difficulties and features of more intricate systems.
However, while they are more easily accessible to analysis, such toy-models should be considered with caution
since they might lose many important structural properties of the more realistic systems.
Secondly, shock profiles can be identified also in such a case, which can be proven to be stable at least in the
regime of small amplitude shocks.
Last, but not least, regarding at the temperature of the mixture flow as a parameter of the problem,
we show that the zero-temperature model admits viscous shock profiles.
Numerical results indicate that a similar conclusion should apply in the regime of small positive temperatures.
\end{abstract}

\maketitle 


\thispagestyle{empty}

\vspace*{.2cm}
{\small
\noindent{\bf Keywords.}
Fluid-particles interactions; two-phase flow; hydrodynamic limit; shock profiles \\
\noindent{\bf Math.~Subject Classification.} 
35C07 
35L65 
35Q35 
76L05 
}

\tableofcontents

\section{Introduction}

A {\it particle-laden flow} is a class of two-phase fluid flow composed of a {\it carrier phase},
the surrounding
   continuous medium, and a {\it disperse phase}, constituted of small, immiscible
and dilute particles.
Such flows occur in many natural phenomena and industrial processes: snow and rock avalanches \cite{Man,Man2},
desert sandstorms, dispersions of pollutants, pollen and allergens in air \cite{Mora}, aerosols in respiratory flows \cite{BBJM, Moussa},
fluidised beds \cite{HC}, fuel injector, chemical reactors, internal combustion engines \cite{Kiva,Hyk,williams}, just to name a few.

The broad variety of applications, and the wide range of scales involved in these situations,  make it difficult to develop a unified framework.
Two main viewpoints have been adopted to model such flows.
The so-called {\it Eulerian approach} considers all phases  as a continuum so that one is led to hydrodynamic systems
for the densities and velocities (at least) of the disperse phase and the carrier phase \cite{Ba, Gida, Ish}.
In contrast, the {\it Lagrangian approach} describes the particles by means of
their distribution function in phase space, 
the evolution of which is coupled  to a hydrodynamic model, based on either Euler or Navier-Stokes equations, for the carrier fluid.
This defines a fluid-kinetic framework for describing the laden flow under consideration \cite{Orou,PJ1}.
In both cases, the coupling is mainly achieved  through the drag forces exerted by a phase on the other,
which induces momentum exchanges between the two phases.
A valuable approach consists in bringing out connections between these different settings,
following the derivation of fluid equations from the  kinetic equations of gas dynamics \cite{LSR}:
several asymptotic regimes have been identified and investigated, both on theoretical and numerical grounds
 \cite{DV,GJV1,GJV2, Ham,Hof,PEJ,pej1}. The present work is a contribution in this direction.
 
As stated above, an alternative to the continuum approach describes the disperse phase by means
of a Fokker-Planck equation for the dimensionless particle distribution function $f_\eps:(t,x,v)\mapsto f_\eps(t,x,v)$, that is
\begin{equation*}
	\dfrac{1}{T_{\textrm{\tiny ref}}}	\partial_ t f_\eps+\dfrac{V_{\textrm{\tiny ref}}\,v}{L_{\textrm{\tiny ref}}}\partial_x f_\eps
 	= \dfrac{1}{T_S V_{\textrm{\tiny ref}}}\partial_v \left\{V_{\textrm{\tiny ref}}(v-u_\eps)f_\eps+\dfrac{V_{\textrm{\tiny th}}^2}{V_{\textrm{\tiny ref}}}\partial_v f_\eps\right\},\,
\end{equation*}
where
\begin{itemize}
\item $t>0$, $x\in\R$ and $v\in\R$ are the dimensionless time, position and velocity variables, respectively;
\item $T_{\textrm{\tiny ref}}$, $L_{\textrm{\tiny ref}}$ and $V_{\textrm{\tiny ref}}:=L_{\textrm{\tiny ref}}/T_{\textrm{\tiny ref}}$
	are the time, position and velocity dimensions, respectively;
\item $u_\eps$ is the velocity of the surrounding medium  (dimensionless with respect to $V_{\textrm{\tiny ref}}$);
\item the Stokes settling time $T_S$ and the thermal speed $V_{\textrm{\tiny th}}$ are defined by
	\begin{equation*}
        		T_S:=\frac{m}{6\pi \mu a}\qquad\textrm{and}\qquad V_{\textrm{\tiny th}}:=\sqrt{\frac{\kappa_B \Theta}{m}}\,,
        \end{equation*}
where $a$ and $m$ are the radius and mass of the particles,
$\mu$ and $\Theta$ are the dynamic viscosity and temperature of the surrounding fluid,
and $\kappa_B$ is the Boltzmann constant.
\end{itemize}
In the following, we concentrate on the flowing regime where $T_S=\eps\,T_{\textrm{\tiny ref}}$ and ${V_{\textrm{\tiny ref}}= V_{\textrm{\tiny th}}\sqrt{\theta}}$.
The parameters $\eps$ and $\theta$ are the reminders of the process of making the equation dimensionless.
Moreover, we focus on the regime $\eps$ small, viz. $0<\eps\ll 1$.
We refer the reader to \cite{CG, CGL} for further details on this scaling.

The resulting equation for the unknown $f_\eps$, describing the particle distribution in the phase space, is
\begin{equation}\label{eq:kin}
  	\partial_ t f_\eps+v\partial_x f_\eps =\dfrac{1}{\eps} L_{u_\eps}(f_\eps)\,,
\end{equation}    
with the {\it Fokker--Planck operator} $L_u$ defined by
\begin{equation}\label{eq:fp}
	L_{u} (f):= \partial_v \bigl\{(v-u)f+\theta \partial_v f\bigr\}
\end{equation}
Since $u_\eps$ represents the velocity of the surrounding medium, the term $\partial_v \left\{(v-u_\eps)f_\eps\right\}$
describes the drag force exerted on the particles by the fluid, assumed to be proportional to the relative velocity between the two species.
Taking the zero-th and first order moments  over the velocity variable gives the apparent mass density of particles $\rho_\eps$
and momentum of the disperse phase $J_\eps$, where 
\begin{equation*}
	\rho_\eps(t,x):=\int f_\eps(t,x,v)\ud v,\qquad J_\eps(t,x):=\int v  f_\eps(t,x,v)\ud v.
\end{equation*}
Equation \eqref{eq:kin} is coupled to a balance law for the momentum of the carrier phase
\begin{equation}\label{eq:fl1}
	\partial_ t (n_\eps u_\eps) +\partial_x \left\{n_\eps u_\eps^2+p(n_\eps)\right\}
		=\dfrac{1}{\eps} (J_\eps-\rho_\eps u_\eps),
\end{equation}
where $n_\eps$ and $u_\eps$ are, respectively, the mass density and the velocity
field of the fluid.
We assume $n_\eps$ is already  dimensionless with respect to a reference density
$n_{\textrm{\tiny ref}}$ and also make $\rho_\eps$ dimensionless with respect to $n_{\textrm{\tiny ref}}$. In
the same way, $p$, that describes the pressure of the carrier phase, is
already supposed dimensionless being defined by
\begin{equation*}
	p(n):=\frac{\tilde{p}(n_{\textrm{\tiny ref}}n)}{n_{\textrm{\tiny ref}}\,V_{\textrm{\tiny ref}}^2}\,,
\end{equation*}
where $\tilde{p}$ is the dimensionalized pressure.
The right-hand side in \eqref{eq:fl1} accounts for the back-friction force exerted by the particles on the fluid.

 Some hypotheses are required on the function $n\mapsto p(n)$.
Precisely, we assume that $p\in C^2$ is a strictly increasing, convex and coercive function, i.e.
\begin{equation}\label{eq:hyppressure}
	p',\,p''>0\quad\textrm{in $(0,\infty)$}\quad\textrm{and}\quad
	\lim_{n\to+\infty}  \displaystyle\frac{p(n)}{n}=+\infty.
\end{equation}
Since the pressure is determined up to an additive constant, we assume the additional condition $p(0)=0$.
Moreover, we focus on the case $p'(0)=0$, a relevant case being the standard pure power form,
usually referred to as  {\it $\gamma$-law},
\begin{equation}\label{eq:gammalaw}
	p(n):=Cn^\gamma\quad\textrm{with}\quad C>0,\quad \gamma>1.
\end{equation}
As $\eps\to 0$ in \eqref{eq:kin}, we guess that 
\begin{equation}\label{eq:ansatz}
	f_\eps(t,x,v)\simeq \rho_\eps (t,x)M_{u_\eps(t,x)}(v)\,,
 \end{equation}
where $M_u$ is the standard {\it Maxwellian distribution},   defined by
\begin{equation}\label{eq:maxwell}
	M_u(v):=\frac{1}{\sqrt{2\pi\theta}}\exp\left(-\frac{|v-u|^2}{2\theta}\right).
\end{equation}      
Since $\theta\partial_v M_u=-(v-u)M_u$, the Fokker--Planck operator $L_u$ can be rewritten as
\begin{equation}\label{eq:fp_form}
	L_u(f)=\theta \partial_v\left\{M_u\,\partial_v(M_{u}^{-1}f)\right\},
\end{equation}
showing, in particular, that $L_u$ vanishes when computed at  $v\mapsto f(v)=\rho M_{u}(v)$.
As a consequence, we expect that the dynamics can be described by means of macroscopic quantities in such a regime.
Indeed, integrating \eqref{eq:kin} with respect to velocity yields
\begin{equation*}
	\partial_t \rho_\eps+\partial_x J_\eps=0.
\end{equation*}
Next, we add the equation for the first order moment to \eqref{eq:fl1} in order to get rid of the singular term by using the identity
\begin{equation*}
	\int v\,\partial_v L_{u_\eps}(f_\eps)\ud v= - \int \bigl\{(v-u_\eps)f_\eps+\theta \partial_v f_\eps\bigr\}\ud v= - J_\eps + \rho_\eps u_\eps.
\end{equation*}
Hence, we end up with
\begin{equation*}
	\partial_ t ( J _\eps+ n_\eps u_\eps) +\partial_x \biggl\{\int v^2 f\ud v + n_\eps u_\eps^2+p(n_\eps)\biggr\} =0.
\end{equation*}
Going back to the ansatz \eqref{eq:ansatz}, we infer
\begin{equation}\label{eq:flux_secondmoment}
	J_\eps\simeq \rho_\eps u_\eps, \qquad \int v^2 f_\eps\ud v\simeq \rho_\eps u _\eps^2+\theta \rho_\eps,
\end{equation}
and, dropping the dependence with  respect to $\eps$, we get the first order system 
\begin{equation}\label{eq:sys}
	\left\{\begin{array}{l}
	\partial_t \rho+\partial_x(\rho u ) =0,\\
	\partial_ t (ru) +\partial_x \left\{ru^2+p(n)+\theta \rho \right\}=0.
	\end{array}\right.
\end{equation}
where $r:=\rho+n$ is called {\it hybrid density}, being the sum of the densities of the disperse
and the carrier phases, denoted by $\rho$ and $n$, respectively.

From the modeling viewpoint, in some circumstances,
it might be questionable to consider the diffusion with respect to the velocity variable as a stiff term in equation \eqref{eq:kin}.
Thus, it  is  equally relevant to consider the situation where $\theta=0$, which means that the Brownian velocity 
fluctuations are negligible.
This situation is much more difficult for the analysis, since the formal ansatz becomes singular.
Namely, as $\eps\to 0$, denoting by  $\delta_{v=u}$ the Dirac delta centered at $u$, we formally infer
 \begin{equation*}
	f_\eps(t,x,v)\simeq \rho_\eps(t,x)\,\delta_{v=u_\eps(t,x)}
\end{equation*}
which leads to \eqref{eq:flux_secondmoment} with $\theta=0$.
This approximation is often used in the modeling of laden flows, but depending on the considered coupling or asymptotic regime,
this pressureless regime might lead to difficulties, both for the analysis \cite{Hof, PEJ,pej1} and for numerics, and possibly
to physically  irrelevant results \cite{saurel}.
Nevertheless, in this paper, we also consider the system \eqref{eq:sys} with $\theta=0$, regarded as a (formal) limiting regime.

Still inspired by the kinetic theory of gases, our objectives are the following.
First, we formally derive \emph{diffusive corrections} to system \eqref{eq:sys} coupled with \eqref{eq:fl1},
in the same spirit as the Chapman-Enskog procedure  leads to the Navier-Stokes equations, keeping track of the $\mathscr O(\eps)$-viscosity terms.
Second, we investigate the structure of viscous shock profiles for the obtained systems.
Namely, following the pioneering work \cite{gilbarg},  we wish to identify solutions of the diffusive equations
with the form
\begin{equation*}
	(\rho,n,u)(t,x)=\mathrm W(y)
	\quad\textrm{where}\quad y:=x-ct,
\end{equation*}
for some given profile $\mathrm W$ with prescribed far-end states, that correspond to ``admissible'' discontinuous 
solutions of the diffusion-less system.

As a warm-up, we start with the case where \eqref{eq:fl1} reduces to the mere Burgers equation: namely in \eqref{eq:fl1},
we  (brutally) set $n_\eps=1$. Hence, we firstly  approach system \eqref{eq:kin}-\eqref{eq:fl1}
with the {\it inviscid Burgers fluid-particle system}, given by 
\begin{equation}\label{eq:red}\tag{\texttt{iB}}
	\partial_t\begin{pmatrix}\rho \\ ru \end{pmatrix}+ \partial_x\begin{pmatrix} \rho u \\ ru^2+\theta \rho \end{pmatrix} = 0,
\end{equation}
recalling that $r=1+\rho$, and its corresponding viscous correction, referred to as the {\it viscous Burgers fluid-particle system}, whose explicit form is
\begin{equation}\label{eq:bfp}\tag{\texttt{vB}}
	\partial_t\begin{pmatrix}\rho \\ ru \end{pmatrix}
	+ \partial_x\begin{pmatrix} \rho u \\ ru^2+\theta \rho \end{pmatrix}
	=\eps\partial_x\left(\mathbf{D}(\rho,ru)\partial_x \begin{pmatrix} \rho \\ ru \end{pmatrix}\right)\,,
\end{equation}
where
\begin{equation}\label{eq:diffBfp}
	\mathbf{D}(\rho,ru)
	=\frac{\rho u}{r^3}\begin{pmatrix} u & -1 \\ 0  & 0 \end{pmatrix}
	+\frac{\theta}{r}\begin{pmatrix} 1/r & 0 \\ -\rho u  & \rho \end{pmatrix}
\end{equation}
(the formal derivation of the correction terms of order $\eps$ will be detailed later on).
Even if both \eqref{eq:red} and \eqref{eq:bfp} possess an entropy $\zeta$, defined by
\begin{equation*}
 \zeta(\rho,ru):=\tfrac12ru^2 +\theta\rho\ln\rho,
\end{equation*}
such toy models are not fully physically meaningful, the main criticism being that they are not invariant under Galilean transformations.
Nevertheless, they are considered here because they are amenable to detailed computations,
which we consider illuminating.

Next, we move to the  coupling with the Euler equations, where the density of the carrier fluid is driven
by the additional conservation law 
\begin{equation*}
	\partial_t n_\eps+\partial_x(n_\eps u_\eps)=0.
\end{equation*}
The corresponding {\it inviscid Euler fluid-particle system} is
\begin{equation}\label{eq:ext}\tag{\texttt{iE}}
	\partial_t\begin{pmatrix} r \\ \rho \\ r u \end{pmatrix}
	+ \partial_x\begin{pmatrix} r u \\ \rho u \\ ru^2+p(n)+\theta \rho \end{pmatrix}=0\,,
\end{equation}
and the higher-order correction, named {\it viscous Euler fluid-particle system} is
\begin{equation}\label{eq:efp}\tag{\texttt{vE}}
	\partial_t\begin{pmatrix} r \\ \rho \\ r u \end{pmatrix}
	+ \partial_x\begin{pmatrix} r u \\ \rho u \\ ru^2+p(n)+\theta \rho \end{pmatrix}
	=\eps\partial_x\left(\mathbf{D}(r,\rho,ru)\,\partial_x \begin{pmatrix} r \\ \rho \\ ru \end{pmatrix} \right),
\end{equation}
where 
\begin{equation}\label{eq:diff_Efp}
	\mathbf{D}(r,\rho,ru)
	=\frac{\rho\,np'(n)}{r^2}\begin{pmatrix} 0 & 0 & 0 \\ -1  & 1 & 0 \\ 0 & 0 & 0 \end{pmatrix}
	+\theta \begin{pmatrix} 0 & 0 & 0 \\ 0  & n^2/r^2 & 0 \\ -{\rho u}/{r} & 0 & {\rho}/{r} \end{pmatrix}
\end{equation}
(again, the formal derivation will be detailed later on).
Differently from the previous case, systems \eqref{eq:ext} and  \eqref{eq:efp} are {\it invariant under Galilean transformations}.
In addition, \eqref{eq:efp} also possesses an entropy, defined by
\begin{equation*} 
	 \zeta(r,\rho,ru):=\tfrac12ru^2 +\Pi(n)+\theta\rho\ln \rho
\end{equation*}
where
\begin{equation*}
	\Pi(n):=\int_0^n \int_0^s \frac{1}{\varsigma}\frac{\ud p}{\ud\varsigma}(\varsigma)\,\ud \varsigma \ud s.
\end{equation*}
In general, for both \eqref{eq:bfp} and \eqref{eq:efp}, the  existence of an entropy $\zeta$ plays a pivotal role;
specifically, it will be crucial to establish existence (and stability) of viscous shock profiles.

The paper is organized as follows.
Section~\ref{sec:Gal} collects some useful notions and basic facts on general hyperbolic-parabolic systems.
It can be safely skipped by the reader familiar with these topics.
In Section~\ref{sec:Bur}, we consider the model \eqref{eq:bfp}, establishing the existence of viscous profile
for weak shocks with positive temperatures.
Subsequently, in Section~\ref{sec:Euler}, we turn to analyze  system \eqref{eq:efp} where the diffusion correction term is degenerate.
Nevertheless, we are still able to provide a rigorous proof for the existence of weak shock profiles, 
whose stability can be established by appealing to  general results for small-amplitude profiles of hyperbolic-parabolic systems.
We also  investigate the case where $\theta=0$, which induces new degeneracies;
in particular, the entropy of the system is not strictly convex.
Finally, Section~\ref{num} is devoted to further studying the model \eqref{eq:efp} starting from the basic observation  
that a more complete result can be obtained for the temperature-less system, proceeding by direct inspection of the
corresponding ODE.
Expressing the ODE in reduced variables allows  us to show that there are in fact two parameters of interest.
This leads to showing the existence of a shock profile, which is illustrated numerically.
In the temperature case, the differential system is also expressed in these reduced variables and solved
numerically for small temperatures.
Finally, the numerical profile is compared to its temperature-less counterpart.
 
\section{General properties of conservation laws}\label{sec:Gal}

Let us collect here a series of definitions and basic statements that will be used throughout the paper.
For further details, we refer the reader to the classical textbooks \cite{Daf,Smo}.
Let $\mathscr M_m(\mathbb R)$ be the space of $m\times m$ matrices with real entries.
Then, given functions $F:\mathbb R^m\rightarrow \mathbb R^m$ and $\mathbf D: \mathbb R^m\rightarrow \mathscr M_m$,
we consider the system of conservation laws for the unknown function $\W:[0,\infty)\times\mathbb R\rightarrow \mathbb R^m$
\begin{equation}\label{eq:hyppar_cons}
	\partial_t \W+\partial_x F(\W)
	=\eps \partial_{x}\left\{\mathbf{D}(\W) \partial_x \W\right\}
	\qquad t\geq 0,\quad x\in \mathbb R,
\end{equation}
for some $\eps>0$ under the assumption that the formal limiting system $\eps\to 0^+$
\begin{equation}\label{eq:hyp_cons}
	\partial_t \W+\partial_x F(\W)=0
	\qquad t\geq 0,\quad x\in \mathbb R,
\end{equation}
is strictly hyperbolic, i.e. the Jacobian $\dd F$ has real distinct eigenvalues for any $\W$ under consideration.

\begin{definition}
Let $\mathbf{A},\mathbf{B}\in\mathscr M_n$ two matrices with $\mathbf{B}$ invertible.
A (column) vector  $\mathbf{r}\neq 0$ is said to be a \emph{right
  eigenvector} of $\mathbf{A}$ with respect to $\mathbf{B}$ 
relative to the eigenvalue $\lambda$ if there holds $\left(\mathbf{A}-\lambda\mathbf{B}\right)\mathbf{r}=0$.
A \emph{left (row) eigenvector} $\boldsymbol{\ell}\neq 0$ of $\mathbf{A}$
with respect to $\mathbf{B}$ relative to the eigenvalue $\lambda$ is
 defined as $\boldsymbol{\ell}\left(\mathbf{A}-\lambda\mathbf{B}\right)=0$.
\end{definition}

For shortness, we use the shortened names {\it right/left eigenvector of $\mathbf{A}$ with respect to $\mathbf{B}$} whenever
the eigenvalue $\lambda$ is clear from the context.

To start with, we state and prove a straightforward Lemma showing that the directional derivatives of the eigenvalues
of $\dd F$ with respect to the corresponding right eigenvectors are invariant under diffeomorphisms.

\begin{lemma}\label{lem:lemm_gal}
Let $F$, $G$, $H\,:\,\mathbb R^m\rightarrow \mathbb R^m$ be three differentiable functions
such that $\dd G$ is invertible and $F=H\circ G^{-1}$.
Let  $\lambda$  be an eigenvalue of $\mathrm dF(\W)$,  or, equivalently, an eigenvalue of $\mathrm dH(\U)$
with respect to $\mathrm dG(\U)$, where $\W=G(\U)$.
Let $\mathbf{r}$ be a right eigenvector of $\dd F$ with respect to
$\mathbf{I}$. Then $\mathbf{s}=\dd G(\U)^{-1}\mathbf{r}$ is a right
eigenvector of $\dd H$ with respect to $\dd G$, also for the eigenvalue $\lambda$.
Moreover, the scalar products $\nabla_{\W} \lambda\cdot \mathbf{r}$ and
$\nabla_{\U} \mu\cdot \mathbf{s}$, where $\mu(\U)=\lambda(G(\U))$, coincide.
\end{lemma}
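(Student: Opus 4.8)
The plan is to verify the two claims by straightforward linear algebra and the chain rule. First I would establish that $\mathbf{s}=\dd G(\U)^{-1}\mathbf r$ is a right eigenvector of $\dd H$ with respect to $\dd G$. Since $F=H\circ G^{-1}$, differentiating and evaluating at $\W=G(\U)$ gives $\dd F(\W)=\dd H(\U)\,\dd G(\U)^{-1}$. Multiplying the eigenvector relation $\left(\dd F(\W)-\lambda\mathbf I\right)\mathbf r=0$ on the left by $\dd G(\U)$ and inserting $\mathbf I=\dd G(\U)^{-1}\dd G(\U)$ in the appropriate place yields $\left(\dd H(\U)-\lambda\,\dd G(\U)\right)\dd G(\U)^{-1}\mathbf r=0$, i.e. $\left(\dd H(\U)-\lambda\,\dd G(\U)\right)\mathbf s=0$, which is the desired generalized eigenvector identity; this also shows the two notions of ``eigenvalue'' in the statement coincide.

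Next I would handle the equality of directional derivatives. Here $\mu=\lambda\circ G$, so the chain rule gives $\nabla_{\U}\mu(\U)=\dd G(\U)^{\mathsf T}\,\nabla_{\W}\lambda(\W)$, understanding $\nabla$ as a column vector. Then
\begin{equation*}
\nabla_{\U}\mu(\U)\cdot\mathbf s
=\bigl(\dd G(\U)^{\mathsf T}\nabla_{\W}\lambda(\W)\bigr)\cdot\bigl(\dd G(\U)^{-1}\mathbf r\bigr)
=\nabla_{\W}\lambda(\W)\cdot\bigl(\dd G(\U)\,\dd G(\U)^{-1}\mathbf r\bigr)
=\nabla_{\W}\lambda(\W)\cdot\mathbf r,
\end{equation*}
using the adjoint relation $(\mathbf M^{\mathsf T}a)\cdot b=a\cdot(\mathbf M b)$. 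This closes the argument.

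There is essentially no serious obstacle here; the only points requiring a little care are bookkeeping ones. First, $\lambda$ must be regarded as a well-defined differentiable function of $\W$ (not merely of $\U$) near the state under consideration — this is legitimate because $\dd F$ has real distinct eigenvalues, so each eigenvalue depends smoothly on $\W$, and $G$ is a local diffeomorphism by the invertibility of $\dd G$. Second, one must be consistent about whether $\nabla\lambda$ denotes a row or a column object; writing everything in terms of scalar products $a\cdot b$ as above avoids any transpose ambiguity. With these conventions fixed, both assertions follow immediately from $\dd F=\dd H\,\dd G^{-1}$ and the chain rule, so the proof is short.
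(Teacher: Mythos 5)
Your argument is correct and follows essentially the same route as the paper: the identity $\dd F(\W)=\dd H(\U)\,\dd G(\U)^{-1}$ from the chain rule, insertion of $\dd G\,\dd G^{-1}$ to pass to the generalized eigenvector relation, and the chain rule $\nabla_{\U}\mu=\dd G(\U)^{\intercal}\nabla_{\W}\lambda$ combined with the adjoint identity to equate the two scalar products. Your added remarks on the smooth dependence of the eigenvalues and on row/column conventions are sensible bookkeeping but do not change the substance.
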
 
\begin{proof}
Let $H(\U):=F\circ G(\U)=F(\W)$.
The statement is a consequence of the chain rule which leads to the identities
\begin{equation*}
	\dd F(\W)=\dd H(G^{-1}(\W))\ \dd (G^{-1})(\W),\qquad
	\dd (G^{-1})(\W)=\big(\dd G(\U)\big)^{-1}\,,
\end{equation*}
with the former recast simply as $\dd F(\W)=\dd H(\U)\ \dd G(\U)^{-1}$.
For $(\lambda, \mathbf{r})$ a left eigenpair of the matrix $\dd F$, we obtain
\begin{equation*}
	0=\bigl(\dd F(\W)-\lambda\mathbf{I}\bigr)\mathbf{r}
	=\bigl(\dd H(\U) \dd G(\U)^{-1}-\lambda\mathbf{I}\bigr)\mathbf{r}
	=\bigl(\dd H(\U)-\lambda \dd G(\U)\bigr)\mathbf{s}
\end{equation*}
with $\mathbf{s}:=\dd G(\U)^{-1}\mathbf{r}.$
Similarly, if $\boldsymbol{\ell}$ is a left eigenvector of $\dd F(\W)$, we get
\begin{equation*}
	0=\boldsymbol{\ell}\bigl(\dd F(\W)-\lambda\mathbf{I}\bigr)
	=\boldsymbol{\ell}\bigl(\dd H(\U)-\lambda\dd G(\U)\bigr)\dd G(\U)^{-1}.
\end{equation*}
Thus, we infer that  $\boldsymbol{\ell}$ is a left eigenvector of $\dd H$ with respect to $\dd G$. 
Next, we compute the gradient of the eigenvalue $\lambda(\W)=\lambda(G(\U))=\mu(\U)$ with respect to the 
variables $\W$  (conservative) and  $\U$ (non conservative) obtaining
\begin{equation*}
	\nabla_{\U} \mu (\U)=\dd G(\U)^\intercal \, \nabla_{\W} \lambda (G(\U))\,.
\end{equation*}
Hence, there holds
\begin{equation*}
	\nabla_{\W} \lambda(\W)\cdot \mathbf{r}
		=\nabla_{\U} \mu(\U) \cdot \dd G(\U)^{-1}\mathbf{r}
		=\nabla_{\U} \mu(\U) \cdot \mathbf{s},
\end{equation*}
which concludes the proof.
\end{proof}

The condition $\nabla_{\W} \lambda \cdot \mathbf{r}\neq 0$ characterizes \emph{genuinely nonlinear} fields.
It plays the same role as strict convexity for scalar conservation laws, see \cite[Section~17-B]{Smo}.
Oppositely, {\it linearly degenerate} fields, defined as the ones for which $\nabla_{\W} \lambda\cdot \mathbf{r}= 0$ holds,
correspond to linear transport equations with a  pure motion of the initial datum without gain and loss of regularity.
In particular, asymptotically stable shock solutions cannot be expected to appear into play.

\subsection{Shock wave solutions}

In the limiting regime $\eps=0$, we are specifically interested in discontinuous solutions that reach a specific state $\W_\ast$,
which are required to satisfy the classical {\it Rankine-Hugoniot conditions} \cite{Hug2,Hug3, Rank}
\begin{equation}\label{eq:gen_rh}
	c\, [\![ \W ]\!]=[\![ F(\W) ]\!]\,,
\end{equation}
where $c$ is the jump speed and $ [\![ \W ]\!]:=\W-\W_\ast$.
Such solutions can be  parameterized by the scalar quantity $s\geq 0$  and they are described by curves $s\mapsto \W(s)$
with speed function $s\mapsto c(s)$ associated to the eigenpairs of $\dd F$ such that
\begin{equation*}
	\left\{\begin{aligned}
		\W(0)&=\W_\ast\\
		\dot {\W}(0)&=\mathbf r(\W_\ast)
	\end{aligned}\right.
	\quad\textrm{and}\quad
	\left\{\begin{aligned}
		c(0)&=\lambda(\W_\ast)\\
		\dot c(0)&=\tfrac12 \lambda(\W_\ast)\mathbf r(\W_\ast)
	\end{aligned}\right.
\end{equation*}
(see e.g. \cite[Section~8.2]{Daf} or \cite[Section~17-B]{Smo}).

These pure jump solutions are said to satisfy {\it Liu's entropy criterion} when
\begin{equation}\label{eq:gen_liu}
	c(s)\leq c(\sigma) \textrm{ holds for any $0\leq \sigma\leq s$}.
\end{equation}
The above criterion is crucial because it can be used to select relevant
solutions among all weak discontinuous solutions of the equation.
We refer the reader to \cite{Daf} for motivations and technical details about the conditions, which date back to \cite{Liu}.

\subsection{Stability concepts}

Next, let us switch on the diffusive term in system \eqref{eq:hyppar_cons} by considering the case  $\epsilon>0$.
As a starting point, we consider the initial value problem for the linearized system at the state $\W_\ast$,
namely
\begin{equation}\label{eq:sys_lin}
	\partial_t \W_\eps +\mathbf{A} \partial_x \W_\eps=\eps \mathbf{D}\partial_x^2 \W_\eps\,,
	\qquad \W_\eps(0,\cdot)=\W_{\eps,0}(\cdot)\,,
\end{equation}
where $\mathbf{A}:=\dd F(\W_\ast)$ and $\mathbf{D}:={\mathbf D}(\W_\ast)$.

System \eqref{eq:sys_lin} has constant coefficients and, consequently, it can be scrutinized by means of standard Fourier analysis,
analysing the corresponding symbol $P^\eps_\ast(\xi):=i\xi\mathbf{A}+\eps\,\xi^2{\mathbf D}$.
As it is well-known, the Fourier transform $\hat{\W}_\eps$ of ${\W}_\eps$ solves
$\partial_t \hat{\W}_\eps = -P^\eps_\ast(\xi)\hat{\W}_\eps$ with initial condition $\hat{\W}_\eps(0)=\hat{\W}_{\eps,0}$,
whose solution $\hat{\W}_\eps=\hat{\W}_\eps(t;\xi)$ is formally given by the operator $t\mapsto \exp\{-t P^\eps_\ast(\xi)\}\hat{\W}_{\eps,0}$.

%
%

In \cite{MaPe,Pego84} different stability notions have been introduced, which turn out to be crucial for the existence of shock profiles.

\begin{definition}\label{def:def2}
The linear system \eqref{eq:sys_lin} is \emph{uniformly stable at $\W_\ast$ with respect to $\eps$},
or simply \emph{stable at $\W_\ast$}, if for any $T>0$
there exists $C_T>0$, independent of $\eps$, 
such that 
\begin{equation*}
	\sup\left\{\frac{\|\W_\eps(t,\cdot)\|_{L^2}}{\|\W_{\eps,0}\|_{L^2}}\,:\, 0<\eps<1,\,t\in[0,T]\right\}\leq C_T\,.
\end{equation*}
for some initial datum $\W_{\eps,0}$ with non-zero $L^2$-norm.
The set of stable linear systems \eqref{eq:sys_lin} is denoted by $\mathcal{S}$.
The interior of such set is composed by \emph{strictly stable systems}.
\end{definition}

Stability of \eqref{eq:sys_lin} can be rephrased by means of a property on the matrices ${\mathbf A}$ and  ${\mathbf D}$.  
Namely, according to \cite{Pego84}, one has to check that the matrix ${\mathbf D}$ is {\it uniformly stable} with respect to ${\mathbf A}$,
meaning that for each $T>0$ there exists a constant $C_T$ such that
\begin{equation}\label{eq:stablematrix}
\sup\left\{\left\|\exp\{-t P^\eps_\ast(\xi)\}\right\|_{\mathcal{M}_m}\,:\,0<\eps<1,\,t\in[0,T],\,\xi\in\mathbb{R}\right\}\leq C_T\,, 
\end{equation} 
where $\|\cdot\|_{\mathcal{L}(L^2)}$ denotes the operator norm from $L^2$ to $L^2$.
The latter is also equivalent to the existence of a universal constant $C>0$ such that
\begin{equation*}
\sup\limits_{t\geq 0,\,\zeta\in\R} \left\|\exp\{-t P^1_\ast(\zeta)\}\right\|_{\mathcal{M}_m}\leq C\,.
\end{equation*}
In \cite[Theorem 2.1]{MaPe} a list of properties equivalent to strict stability is given.
Among them, we recall the following one for readers' convenience.

\begin{theorem}\label{th:majdapego21}
The linear system  \eqref{eq:sys_lin} is strictly stable if and only if there exists $\delta>0$ such that
the eigenvalues  $\lambda_j(\xi)$ of the symbol $P^\eps_\ast(\xi)$ satisfy the condition
 \begin{equation*}
 	\textrm{\rm Re}\,\lambda_j(\xi)\leq -\delta|\xi|^2
	\qquad\textrm{for any}\quad \xi\in\R.
\end{equation*}
\end{theorem}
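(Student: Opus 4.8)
The statement to prove is Theorem~\ref{th:majdapego21}, characterizing strict stability of the constant-coefficient linear system \eqref{eq:sys_lin} via a uniform parabolic bound on the real parts of the eigenvalues of the symbol $P^\eps_\ast(\xi) = i\xi\mathbf A + \eps\xi^2\mathbf D$. The plan is to exploit the scaling structure of the symbol together with the characterizations of stability recalled just above, and to reduce everything to a spectral estimate for $P^1_\ast(\zeta)$ on the real line.

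\textbf{Step 1: reduce to $\eps = 1$ by scaling.} First I would observe that $P^\eps_\ast(\xi) = i\xi\mathbf A + \eps\xi^2\mathbf D = \eps\bigl(i(\xi/\eps)\mathbf A \cdot \eps + \dots\bigr)$ — more precisely, setting $\zeta = \eps\xi$ one checks $P^\eps_\ast(\xi) = \tfrac1\eps P^1_\ast(\eps\xi)$ is \emph{not} quite right dimensionally, so the correct substitution is $P^\eps_\ast(\xi) = \eps\xi^2\mathbf D + i\xi\mathbf A$ and with $\zeta := \eps\xi$ one gets $\eps P^\eps_\ast(\xi) = i\zeta\mathbf A + \zeta^2\mathbf D = P^1_\ast(\zeta)$, hence $\exp\{-tP^\eps_\ast(\xi)\} = \exp\{-(t/\eps)P^1_\ast(\eps\xi)\}$. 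This is exactly the equivalence already stated in the excerpt between the $\eps$-uniform bound \eqref{eq:stablematrix} and the bound $\sup_{t\ge0,\zeta\in\R}\|\exp\{-tP^1_\ast(\zeta)\}\|\le C$; so strict stability (an interior point of $\mathcal S$, stable under perturbations of $(\mathbf A,\mathbf D)$) is equivalent to a \emph{robust} version of this uniform-in-$(t,\zeta)$ bound for the single symbol $P^1_\ast$. I would therefore restate the goal as: the semigroup generated by $-P^1_\ast(\zeta)$ is uniformly bounded for $(t,\zeta)\in[0,\infty)\times\R$, robustly, if and only if $\mathrm{Re}\,\lambda_j(\zeta)\le -\delta|\zeta|^2$ (after rescaling back, the $|\xi|^2$ form reappears).

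\textbf{Step 2: the easy direction ($\Leftarrow$).} Assuming the spectral gap $\mathrm{Re}\,\lambda_j(\zeta)\le-\delta|\zeta|^2$, I would split the real line into a compact piece $|\zeta|\le R$ and the far region $|\zeta|\ge R$. On $|\zeta|\ge R$, all eigenvalues have real part bounded away from $0$ with a quantitative gap; combined with a bound on the condition number of the eigenbasis (which for a symbol that is genuinely of "parabolic type" at infinity behaves like $P^1_\ast(\zeta)\approx\zeta^2\mathbf D$, hence the transformation to eigenbasis is controlled), one gets exponential decay $\|\exp\{-tP^1_\ast(\zeta)\}\|\le Ce^{-\delta t|\zeta|^2}$, uniformly bounded. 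On the compact set $|\zeta|\le R$ the eigenvalues still satisfy $\mathrm{Re}\,\lambda_j\le 0$, with $\mathrm{Re}\,\lambda_j<0$ for $\zeta\ne0$ and $\lambda_j(0)=0$ semisimple (since $P^1_\ast(0)=0$); a continuity/compactness argument, treating the behaviour near $\zeta=0$ via a perturbation expansion of the eigenprojections, yields a uniform bound there too. Stitching the two regions gives the uniform bound, and robustness under perturbations follows because the spectral gap condition is open.

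\textbf{Step 3: the hard direction ($\Rightarrow$), the main obstacle.} This is where the real work lies. From uniform boundedness of $\exp\{-tP^1_\ast(\zeta)\}$ one must extract the quadratic spectral gap. The necessity of $\mathrm{Re}\,\lambda_j(\zeta)\le 0$ for \emph{each} $\zeta$ is immediate (otherwise $\|\exp\{-tP^1_\ast(\zeta)\}\|$ blows up in $t$). The delicate point is upgrading $\mathrm{Re}\,\lambda_j(\zeta)<0$ (for $\zeta\ne0$) to the \emph{uniform} rate $-\delta|\zeta|^2$, and this is exactly where \emph{strict} stability (robustness) rather than mere stability is needed: one argues by contradiction, supposing a sequence $\zeta_n$ along which $\mathrm{Re}\,\lambda_{j_n}(\zeta_n)/|\zeta_n|^2\to0$. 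Either $\zeta_n$ stays in a compact set — then passing to a limit produces a purely imaginary non-zero eigenvalue, or a Jordan block forming at $\zeta=0$, either of which can be shown (using the interior property: perturb $(\mathbf A,\mathbf D)$ slightly to destroy stability) to contradict strict stability; or $|\zeta_n|\to\infty$ — then one rescales $P^1_\ast(\zeta_n)/\zeta_n^2 = \mathbf D + i\mathbf A/\zeta_n \to \mathbf D$, so the offending eigenvalue forces $\mathbf D$ to have an eigenvalue with non-positive real part that is not strictly positive, i.e. $\mathbf D$ fails to be positive-definite on the relevant subspace — and again a small perturbation breaks stability. I expect assembling these cases cleanly — in particular handling the coalescence of eigenvalues and the possible loss of diagonalizability at the transitional $\zeta$'s, and making the "perturb to break stability" step rigorous — to be the crux; the cleanest route is to quote the equivalence list of \cite[Theorem~2.1]{MaPe} and verify that our spectral condition sits in that list, reducing the proof to matching definitions and the scaling argument of Step~1. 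I would present the argument in that streamlined form, citing \cite{MaPe,Pego84} for the parts that are genuinely their theorem, and giving the scaling reduction and the two contradiction cases in detail.
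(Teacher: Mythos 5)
The paper does not actually prove this statement: Theorem~\ref{th:majdapego21} is recalled verbatim, as one item of an equivalence list, from \cite[Theorem~2.1]{MaPe}, so there is no in-paper argument to compare yours against. Your Step~1 is correct and coincides with the remark the paper makes just before the theorem: $\eps P^\eps_\ast(\xi)=P^1_\ast(\eps\xi)$, hence $\exp\{-tP^\eps_\ast(\xi)\}=\exp\{-(t/\eps)P^1_\ast(\eps\xi)\}$, which converts the $\eps$-uniform finite-time bound \eqref{eq:stablematrix} into a bound on $\exp\{-tP^1_\ast(\zeta)\}$ uniform over all $t\geq 0$ and $\zeta\in\R$. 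And your final fallback --- quote \cite[Theorem~2.1]{MaPe} and match definitions --- is precisely what the authors do.

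If the sketch is meant to stand on its own, however, the sufficiency direction (your Step~2) has a genuine gap. A bound $\mathrm{Re}\,\lambda_j(\zeta)\leq-\delta|\zeta|^2$ on the eigenvalues of a matrix family does not by itself yield $\sup_{t,\zeta}\bigl\|\exp\{-tP^1_\ast(\zeta)\}\bigr\|<\infty$: the constant in $\|e^{tM}\|\leq C(M)\,e^{t\max\mathrm{Re}\,\sigma(M)}$ degenerates when eigenvalues of $M(\zeta)$ collide or the eigenbasis becomes ill-conditioned, and such collisions can occur at intermediate values of $\zeta$ even though $\mathbf{A}$ is strictly hyperbolic, since the $\zeta^2\mathbf{D}$ perturbation can force crossings. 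Your phrase ``combined with a bound on the condition number of the eigenbasis'' asserts exactly the estimate that needs proof; this uniform resolvent/eigenprojection control (Kreiss-matrix-theorem territory, handled in \cite{MaPe} via symmetrizers and separate resolvent estimates near $\zeta=0$, on compacta, and at $\zeta=\infty$) is the actual content of the theorem. The necessity direction identifies the right contradiction cases, but the ``perturb $(\mathbf{A},\mathbf{D})$ to leave $\mathcal{S}$'' steps are only named, not carried out. Finally, a sign-convention wrinkle worth fixing in any written version: since the propagator is $\exp\{-tP^\eps_\ast(\xi)\}$ with $P^\eps_\ast(\xi)=i\xi\mathbf{A}+\eps\xi^2\mathbf{D}$, decay corresponds to the eigenvalues of $P^\eps_\ast$ having \emph{positive} real part of order $\eps|\xi|^2$; the displayed inequality pertains to the eigenvalues of $-P^\eps_\ast$ (equivalently of $-P^1_\ast$ after your rescaling), so you should fix one convention and keep it consistent throughout.
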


The above result induces a necessary and sufficient condition for strict stability which is
more manageable with respect to the original (and more abstract) definition.

\subsection{Entropy in the general setting}

A pivotal role is played by the notion of {\it entropy}, which provides very strong structural consequences on the underlying PDE system.

\begin{definition} 
Let $\mathcal{U}\subset\mathbb R^m$ be a neighborhood of some reference point $\W_\ast$.
The $C^2$ functions $\zeta: \mathcal{U} \to \mathbb R$ and $q:  \mathcal{U} \to \mathbb R$ with $\nabla q^\intercal=\nabla \zeta^\intercal\,\ud F$
form an \emph{entropy/entropy flux pair} for system \eqref{eq:hyppar_cons} if, for any $\W\in \mathcal{U}$,
\begin{itemize}
\item[\bf i.] {\sl (entropy convexity)} $\ud^2\zeta $ is positive definite;
\item[\bf ii.] {\sl (dissipativity)} $\ud^2\zeta\,\mathbf{D}$ has a positive definite symmetric part.
\end{itemize}
\end{definition}

Incidentally, let us note that a necessary condition for the existence of a function $q$ such that $\nabla q^\intercal=\nabla \zeta^\intercal\,\ud F$
is that the derivative of $\nabla \zeta^\intercal\,\ud F$ is symmetric.
In coordinates, this amounts to require 
\begin{equation*}
	(\ud^2 q)_{ij} = \partial_{j}\Bigl(\sum_{k} \partial_k \zeta_k\,\partial_i F_k \Bigr)
		=\sum_{k} \partial_k \zeta_k\,\partial_{ji}^2 F_k + \sum_{k} \partial_{jk}^2 \zeta_k\,\partial_{i} F_k.
\end{equation*}
Hence, $\ud^2 F_k$ being  symmetric, this is equivalent to requesting that $\ud^2\zeta \ud F$ is symmetric.

\begin{prop}\label{prop:ent_form}  
Assume system  \eqref{eq:hyppar_cons} admits a strictly convex entropy $\zeta$.
Then, the \emph{entropy variable} $\U := \nabla\zeta(\W)$ satisfies 
 \begin{equation}\label{eq:entr_sys}
	\partial_t G(\U) +\partial_x H(\U)=\eps\partial_x \left\{\mathbf B(\U)\partial_x\U\right\}
\end{equation}
where  $\W=G(\U)$, $\ud G$ is symmetric positive definite, $\ud H$  is symmetric, $\mathbf B$ is symmetric.
\end{prop}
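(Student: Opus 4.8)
The plan is to carry Godunov's symmetrization trick over to the present viscous setting; strict convexity of $\zeta$ is used only to make the change of unknowns $\W\mapsto\U=\nabla\zeta(\W)$ admissible. Since $\dd^2\zeta(\W)$ is symmetric positive definite on $\mathcal U$, the Jacobian of $\W\mapsto\nabla\zeta(\W)$, namely $\dd^2\zeta$, is everywhere invertible; after shrinking $\mathcal U$ around $\W_\ast$ if needed, the inverse function theorem makes it a $C^1$-diffeomorphism onto its image. I then set $G:=(\nabla\zeta)^{-1}$, so that $\W=G(\U)$ and, differentiating $\nabla\zeta(G(\U))=\U$, one gets $\dd G(\U)=\bigl(\dd^2\zeta(G(\U))\bigr)^{-1}$. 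Being the inverse of a symmetric positive definite matrix, $\dd G$ is itself symmetric positive definite, which already settles the first claim.

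Next I would substitute $\W=G(\U)$ into \eqref{eq:hyppar_cons}. Using $\partial_t\W=\partial_t\bigl(G(\U)\bigr)$, $\partial_xF(\W)=\partial_x\bigl((F\circ G)(\U)\bigr)$ and $\partial_x\W=\dd G(\U)\,\partial_x\U$, the system takes exactly the form \eqref{eq:entr_sys} with
\begin{equation*}
	H:=F\circ G,\qquad \mathbf B(\U):=\mathbf D\bigl(G(\U)\bigr)\,\dd G(\U),
\end{equation*}
so that, with $\W=G(\U)$, the chain rule yields $\dd H(\U)=\dd F(\W)\,\dd G(\U)=\dd F(\W)\bigl(\dd^2\zeta(\W)\bigr)^{-1}$ and $\mathbf B(\U)=\mathbf D(\W)\bigl(\dd^2\zeta(\W)\bigr)^{-1}$. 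It remains to verify the two symmetry statements, and this is where the entropy structure genuinely enters. For $\dd H$, the remark preceding the proposition shows that the compatibility relation $\nabla q^\intercal=\nabla\zeta^\intercal\,\dd F$ forces $\dd^2\zeta\,\dd F$ to be symmetric; combined with the symmetry of $\dd^2\zeta$ this reads $\dd^2\zeta\,\dd F=\dd F^\intercal\,\dd^2\zeta$, hence $\dd F\,(\dd^2\zeta)^{-1}=(\dd^2\zeta)^{-1}\dd F^\intercal=\bigl(\dd F\,(\dd^2\zeta)^{-1}\bigr)^\intercal$, i.e. $\dd H=\dd H^\intercal$. The identical manipulation applied to $\dd^2\zeta\,\mathbf D$ shows that $\mathbf B=\mathbf D\,(\dd^2\zeta)^{-1}$ is symmetric, and for $v\neq0$, putting $w:=(\dd^2\zeta)^{-1}v$ so that $v^\intercal=w^\intercal\dd^2\zeta$,
\begin{equation*}
	v^\intercal\mathbf B\,v=w^\intercal\,\dd^2\zeta\,\mathbf D\,w>0,
\end{equation*}
so $\mathbf B$ inherits the positive definiteness of the symmetric part of $\dd^2\zeta\,\mathbf D$.

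Modulo the change of variables, the computation is routine; the one step deserving care is the symmetry of $\mathbf B$. Indeed $\mathbf B=\mathbf D\,(\dd^2\zeta)^{-1}$ is symmetric \emph{if and only if} $\dd^2\zeta\,\mathbf D$ is symmetric, so the dissipativity condition has to be read in its usual hyperbolic--parabolic form, namely that $\dd^2\zeta\,\mathbf D$ is symmetric (hence, by the positivity assumption, positive definite); assuming only that the symmetric part of $\dd^2\zeta\,\mathbf D$ is positive definite would yield a $\mathbf B$ that is symmetrizable but not symmetric. Apart from this point, no global hypothesis on $F$ or $\mathbf D$ is needed, since every assertion is local around $\W_\ast$, in accordance with the local nature of $\zeta$ in the definition.
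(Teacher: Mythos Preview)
Your argument is essentially the same as the paper's: define $G=(\nabla\zeta)^{-1}$, obtain $\dd G=(\dd^2\zeta)^{-1}$, set $H=F\circ G$ and $\mathbf B=\mathbf D\,(\dd^2\zeta)^{-1}$, and deduce the symmetry of $\dd H$ and $\mathbf B$ from the symmetry of $\dd^2\zeta\,\dd F$ and $\dd^2\zeta\,\mathbf D$ respectively. The paper's proof is simply terser.

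Your final paragraph is a genuine addition and is well taken: the dissipativity condition {\bf ii.} in the paper's definition only requires the \emph{symmetric part} of $\dd^2\zeta\,\mathbf D$ to be positive definite, whereas the symmetry of $\mathbf B$ actually needs $\dd^2\zeta\,\mathbf D$ itself to be symmetric. The paper's own proof silently invokes ``the symmetry of $\dd^2\zeta\,\mathbf D$'' without this being stated in the hypotheses, so you have correctly identified a small gap between the proposition as stated and its proof. In the concrete systems treated later in the paper (Sections~\ref{sec:Bur} and~\ref{sec:Euler}) the product $\dd^2\zeta\,\mathbf D$ is explicitly checked to be symmetric, so the issue is ultimately harmless for the applications, but your caveat is the right one to flag.
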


\begin{proof} The change of coordinates $\W\to \U = \nabla \zeta (\W)$  is globally invertible, since
its Jacobian $\ud^2 \zeta$ is symmetric and positive definite.
In turn,  system \eqref{eq:hyppar_cons} can be cast under the form \eqref{eq:entr_sys}, with 
$\mathrm dG(\U)=(d^2\zeta(\W))^{-1}$  symmetric positive definite, since the entropy is strictly convex,
where $H(\U)=\left(F\circ G\right)(\U)$ and $\mathbf B(\U)=\left(\mathbf D\circ G\right)(\U)\,\mathrm d G(\U)=
 \mathbf D(\W)(\mathrm d^2 \zeta(\W))^{-1}$.
The symmetry of $\ud H = \ud F (\textrm{d}^2\zeta)^{-1}$, and ${ \mathbf B}$
follow from the symmetry of $\ud^2\zeta \ud H$ and $\ud^2\zeta\,\mathbf D$. 
\end{proof}

In addition, following \cite[Corollary 2.2]{MaPe}, it can be proved that {\it a sufficient condition for strict stability at $\W_\ast$
is the existence of a positive definite symmetric matrix $\mathbf{X}$ so that $\mathbf{X}\mathbf{A}$ is symmetric and $\mathbf{X}\mathbf{D}$ is
positive definite (not necessarily symmetric)}.
Later on, the matrix $\mathbf{X}$ will be chosen equal to the hessian $\ud^2\zeta$ of the entropy $\zeta$, i.e. $\mathbf{X}=\ud^2\zeta$.

\subsection{Energy estimates and viscous dissipation}
\label{sec:sub_enest}

The existence of an entropy is crucial to develop some basic energy estimates holding for \eqref{eq:hyppar_cons}.
For the sake of simplicity, let us  explain the  role of 
entropy by considering  the linearized equations in \eqref{eq:sys_lin}.

Preliminarily, let us recall a standard property.
Decomposing a (constant) matrix $\mathbf{A}$ as the sum of its symmetric and skew-symmetric parts
$\mathbf{A}=\mathbf{A}_{\textrm{sym}}+\mathbf{A}_{\textrm{skew}}$ where 
$\mathbf{A}_{\textrm{sym}}:=\tfrac12\left(\mathbf{A}+\mathbf{A}^\intercal\right)$ and
$\mathbf{A}_{\textrm{skew}}:=\tfrac12\left(\mathbf{A}-\mathbf{A}^\intercal\right)$, there holds
\begin{equation}\label{eq:integralidentity}
	\int_{\R} \W \cdot \left(\mathbf{A} \partial_x \W\right) \,\ud x
	= \int_{\R} \W \cdot \left(\mathbf{A}_{\textrm{skew}} \partial_x \W\right) \,\ud x.
\end{equation}
for any real-valued smooth function $\W$ such that $\W(\pm\infty)=0$,
Indeed for symmetric matrices $\mathbf{S}$, there holds
\begin{equation*}
	\int_{\R} \W \cdot \left(\mathbf{S} \partial_x \W\right) \,\ud x
	= \int_{\R} \left(\mathbf{S}^\intercal \W\right) \cdot \partial_x \W \,\ud x
	= \int_{\R} \left(\mathbf{S} \W\right) \cdot \partial_x \W \,\ud x
	= - \int_{\R} \left(\mathbf{S} \partial_x \W\right) \cdot \W \,\ud x
\end{equation*}
so that \eqref{eq:integralidentity} is zero for $\mathbf{A}$ symmetric, i.e. if $\mathbf{A}=\mathbf{A}_{\textrm{sym}}$.

Such property suggests the following preliminary definition.

\begin{definition}\label{def:parab}
System \eqref{eq:hyppar_cons} is said to be \emph{parabolic at  $\W_\ast$} if the (real) eigenvalues of the symmetric
matrix $\mathbf{D}_{\textrm{\rm sym}}:=\tfrac12\left(\mathbf{D}+\mathbf{D}^\intercal\right)$ lie in $(0,+\infty)$.
\end{definition}

In such a case, assuming appropriate boundary conditions at $\infty$ on $\W_\eps$,
it is possible to deduce an energy estimate for \eqref{eq:sys_lin}.
Precisely,  multiplying by $\W_\eps$ and integrating with respect to the space variable $x$, we end up with (after an additional integration by parts)
\begin{equation*}
	\frac{\ud}{\ud t}\left( \frac12 \|\W_\eps(t,\cdot)\|_{L^2}^2\right) +\eps \int_{\R} \partial_x \W_\eps\cdot \mathbf{D}\,\partial_x \W_\eps\,\ud x
		= -  \int_{\R} \W_\eps\cdot \left(\mathbf{A} \partial_x \W_\eps\right)\,\ud x 
\end{equation*}
which, taking into account \eqref{eq:integralidentity}, reduces to
\begin{equation*}
	\frac{\ud}{\ud t}\left( \frac12 \|\W_\eps(t,\cdot)\|_{L^2}^2\right) +\eps \int_{\R} \partial_x \W_\eps\cdot \mathbf{D}_{\textrm{sym}}\,\partial_x \W_\eps\,\ud x
		= -  \int_{\R} \W_\eps\cdot \mathbf{A}_{\textrm{skew}} \partial_x \W_\eps\,\ud x. 
\end{equation*}
For any $M>0$, the above equality provides the estimate
\begin{equation*}
	\begin{aligned}
	\frac{\ud}{\ud t}\left( \tfrac12 \|\W_\eps(t,\cdot)\|_{L^2}^2\right) &+\eps \int_{\R} \partial_x \W_\eps\cdot \mathbf{D}_{\textrm{sym}}\,\partial_x \W_\eps\,\ud x
		\leq C_{\mathbf{A}} \|\W_\eps(t,\cdot)\|_{L^2}\| \partial_x \W_\eps(t,\cdot)\|_{L^2}\\
		&\hskip2.65cm \leq \tfrac{1}{2}C_{\mathbf{A}} M^2\|\W_\eps(t,\cdot)\|_{L^2}^2+\tfrac{1}{2} C_{\mathbf{A}}M^{-2}\|\partial_x \W_\eps(t,\cdot)\|_{L^2}^2\,,
	\end{aligned}
\end{equation*}
with $C_{\mathbf{A}}$ depending only on $\mathbf{A}_{\textrm{skew}}$.
In particular, {\it if $\mathbf{A}$ is symmetric}, then $C_{\mathbf{A}}=0$
and parabolicity implies uniform stability.

In the general case, if system \eqref{eq:hyppar_cons} is parabolic, denoting by $\lambda_1>0$ the minimal eigenvalue of $\mathbf{D}_{\textrm{sym}}$,
we have ${\partial_x \W_\eps\cdot \mathbf{D}_{\textrm{sym}}\,\partial_x \W_\eps\geq \lambda_1\|\partial_x \W_\eps\|^2}$,
such that 
    \begin{equation*}
	\frac{\ud}{\ud t} \|\W_\eps(t,\cdot)\|_{L^2}^2
	+2\left(\eps \lambda_1-\tfrac{1}{2}C_{\mathbf{A}}M^{-2}\right)\|\partial_x \W_\eps(t,\cdot)\|_{L^2}^2
	\leq C_{\mathbf{A}} M^2\|\W_\eps(t,\cdot)\|_{L^2}^2\,.
      \end{equation*}
Then, choosing $M^2=C_{\mathbf{A}}/(2\eps \lambda_1)$, we infer the estimate
    \begin{equation*}
	\frac{\ud}{\ud t}\|\W_\eps(t,\cdot)\|_{L^2}^2 \leq  \frac{C_{\mathbf{A}}^2}{2\eps \lambda_1}\|\W_\eps(t,\cdot)\|_{L^2}^2\,.
      \end{equation*}
Hence, by a straightforward application of Gr\"onwall's Lemma, we infer the bound
\begin{equation*}
	\|\W_\eps(t,\cdot)\|_{L^2}\leq C_{\eps,T}\|\W_\eps(0,\cdot)\|_{L^2}
\end{equation*}
where $C_{\eps,T}=\exp\left\{C_{\mathbf{A}}^2T/(4\eps \lambda_1)\right\}$ tends to $+\infty$ as $\eps\to 0^+$
if $C_{\mathbf{A}}>0$. 
Hence, it is transparent that such bounds do not provide any information
relative to the (eventual) \emph{uniform} stability of system \eqref{eq:sys_lin}. In
fact, some choices of (non-symmetric) $\mathbf{A}$ lead to the non uniform
stability of \eqref{eq:sys_lin}.

Differently, let us explore the case in which there exists a symmetric positive definite matrix $\mathbf{X}$
such that $\mathbf{X}\mathbf{A}$ is symmetric and $(\mathbf{X}\mathbf{D})_{\textrm{sym}}$ is positive definite.
Then, multiplying the linear system in \eqref{eq:sys_lin} by $\mathbf{X}$, we obtain the modified system
\begin{equation}
	\mathbf{X}\,\partial_t \W_\eps + \mathbf{X}\mathbf{A} \partial_x \W_\eps
		=\eps \mathbf{X}\mathbf{D}\partial_x^2 \W_\eps\,.
\end{equation}
Next, let us proceed as before: multiplying by $\W_\eps$ and integrating over $\R$,
\begin{equation*}
	\frac{\ud}{\ud t}\|{\mathbf X}^{1/2}\W_\eps(t,\cdot)\|_{L^2}^2
	+2\eps \int_{\R} \partial_x \W_\eps\cdot (\mathbf{X}\mathbf{D})_{\textrm{sym}}\,\partial_x \W_\eps\,\ud x\leq 0
\end{equation*}
having used the identity \eqref{eq:integralidentity} to the symmetric matrix $\mathbf{X}\mathbf{A}$ which provides a corresponding starting
energy estimates for $\|{\mathbf X}^{1/2}\W_\eps\|_{L^2}$ which is also uniform with respect to $\eps$.
Uniform stability is thus guaranteed under the assumption of the existence of a symmetrizer $\mathbf{X}$ with the properties described above.

When the system of conservation laws \eqref{eq:hyppar_cons} possesses an entropy $\zeta$, it can be proved that  $\mathrm d^2\zeta$
is indeed a symmetrizer for \eqref{eq:hyppar_cons} and, thus, plays the role of $\mathbf{X}$ previously used to deduce
an energy estimate uniform in $\eps$.
Entropy and its compatibility with the diffusion matrix thus allow us to derive  stability estimates that are stronger 
than the ones obtained by using the parabolicity of the matrix $\mathbf D$.
This issue will be further illustrated later on.

If the matrix $(\mathbf{X}\mathbf{D})_{\textrm{sym}}$ is only positive semi-definite, additional assumptions are required.
Among others, a well-established approach posits that the celebrated {\it Kawashima--Shizuta condition} holds,
consisting in the request that the linear equation in \eqref{eq:sys_lin} is such that {\it no eigenvectors of $\mathbf{A}$ are in the kernel of
$\mathbf{D}$}  (see \cite{Kaw, KS}). 
Difficulties relative to the case in which the above condition is not satisfied are explored in details in \cite{BeauZuaz11, MascNata10}.

\section{Flowing regime for the Burgers fluid-particle system}
\label{sec:Bur}

Let us assume that the  carrier  fluid  is incompressible in the sense that $n_\eps\equiv 1$ in $(0,\infty)\times\mathbb{R}$,
so that the dimensionless hybrid density of the mixture becomes  $r=1+\rho$.
Incidentally, let us observe that this is not the standard incompressibility assumption required in fluid-dynamics
giving rise to Euler and Navier-Stokes equations for incompressible media.
Indeed, assuming that the carrier fluid keeps a constant homogeneous density  is a quite crude assumption.
Even if controversial in principle, it makes some computations easier and more explicit, allowing to
bring out interesting structural properties of the model.
It is worth pointing out the analysis of traveling wave solutions and their stability has been already performed
in \cite{DR} for a variant of this toy-model with temperature $\theta=0$ and non-zero fluid viscosity.

\subsection{Derivation and hyperbolicity}
Given $\theta, \eps>0$, let us consider the coupled fluid-kinetic system
\begin{equation}\label{eq:cp2s}
	\left\{\begin{aligned}
	\partial_t f _\eps+v\partial_x f_\eps&=\eps^{-1}\partial_v\left\{(v- u_\eps)f_\eps+\theta\partial_v f_\eps\right\},\\
	\partial_t u_\eps + \partial_x u_\eps^2 &=\eps^{-1}(J_\eps-\rho_\eps u_\eps),
	\end{aligned}\right.
\end{equation}
where
\begin{equation*}
	\rho_\eps(t,x)=\int f_\eps(t,x,v)\ud v
		\quad\textrm{and}\quad
	J_\eps(t,x)=\int v  f_\eps(t,x,v)\ud v,
\end{equation*}
As explained in the Introduction, the expected limit as $\eps\to 0$ is  system \eqref{eq:red}.

\begin{rmk}\label{rmk:invariance}\rm 
As stated before, system \eqref{eq:red} is \emph{not} invariant under Galilean transformations.
Indeed, let us consider the change of variables $(s,y)=(t,x-u_0t)$, with $u_0\in \mathbb R$ a constant velocity, corresponding to
$(\partial_t, \partial_x)=(\partial_s-u_0\partial_y,\partial_y)$ and set $v:=u-u_0$.
Applying the transformation to the first equation in \eqref{eq:red}, we infer
\begin{equation*}
	\partial_t \rho +\partial_x(\rho u) = \partial_s \rho -u_0\partial_y \rho + \partial_y\bigl\{\rho(v+u_0)\bigr\}
		= \partial_s \rho +\partial_y(\rho v).
\end{equation*}
Concerning the second equation, we deduce upon computation
\begin{equation*}
	\begin{aligned}
	\partial_t (ru) +\partial_x (ru^2+\theta \rho)
		&= \partial_s (rv)+\partial_y (rv^2+\theta\rho)+u_0\partial_y v.
	\end{aligned}
\end{equation*}
In particular, in the new reference frame $(s,y)$, system \eqref{eq:red} becomes
\begin{equation*}
	\left\{\begin{aligned}
	&\partial_s \rho +\partial_y(\rho v) = 0,\\
	&\partial_s (rv)+\partial_y (rv^2+\theta \rho)+u_0 \partial_y v= 0,
	\end{aligned}\right.
\end{equation*}
with $v:=u-u_0$, coinciding with the previous system if and only if $u_0=0$.

Differently, system \eqref{eq:red} is invariant under space reversal:
indeed, applying the transformation $(s,y)=(t,-x)$ and $v=-u$, we obtain
\begin{equation*}
	\left\{\begin{aligned}
	&\partial_t \rho +\partial_x(\rho u) = \partial_s \rho - \partial_y(-\rho v) = \partial_s \rho +\partial_y(\rho v) = 0\,,\\
	&\partial_t (ru)  +\partial_x (ru^2+\theta \rho)= -\partial_s (rv) -\partial_y(rv^2+\theta \rho)= 0.
	\end{aligned}\right.
\end{equation*}
\end{rmk}

System \eqref{eq:red} can be cast in a conservative vector form \eqref{eq:hyp_cons} where
\begin{equation}\label{eq:WandF_Bfp}
	\W=(\rho,w)^\intercal\qquad\textrm{and}\qquad F( \W)=(\rho w/r,w^2/r +\theta\rho)^\intercal\,,
\end{equation}
where $w=r u$.
Examining hyperbolicity amounts to focus on the linearization
\begin{equation*}
	\partial_t\W+\dd F(\W_\ast)\partial_x \W=0,
\end{equation*}
where
\begin{equation*}
	\dd F(\W):=\begin{pmatrix} w/r^2 & \rho/r \\ -w^2/r^2+\theta & 2w/r \end{pmatrix}
		=\begin{pmatrix} u/r & \rho/r \\ -u^2+\theta & 2u \end{pmatrix}.
\end{equation*}
In the following computations, let us drop the subscript $\ast$ for the sake of shortness.            
By definition, system \eqref{eq:hyp_cons}  is {\it strictly hyperbolic} at $\W$ if and only if the polynomial
\begin{equation*}
	p(\lambda):=\det\bigl(\dd F(\W)- \lambda\mathbf{I}\bigr)=0
\end{equation*}
has distinct real roots.
Upon substitution, we obtain
\begin{equation*}
\lambda^2-2\left(1+\frac{1}{2r}\right)u\lambda+\frac{2+\rho}{r}\,u^2-\frac{\theta\rho}{r}=0
\end{equation*}
whose solutions are
\begin{equation}\label{eq:evalues}
 	\lambda_\pm(\W):=u\pm\frac{{\sqrt{u^2+\theta\delta^2}\pm u}}{2r}
	\qquad\textrm{with}\quad \delta(\rho):=2\sqrt{\rho\,r}\,.
\end{equation}
Given $\theta>0$, the function $\rho\mapsto \delta(\rho)$ is invertible for $\rho\in [0,+\infty)$.
Indeed, the relation defining $\chi:=\delta^2=4\rho\,r=4\rho(1+\rho)$ can be rewritten
as a second order polynomial in $\rho$, viz. $4\rho^2+4\rho-\chi=0$.
Taking the positive root in the standard formula for the roots of second order polynomials, we infer
\begin{equation*}
	\rho=\varphi(\chi):=\frac{\sqrt{1+\chi}-1}{2}
		=\frac{1}{2}\ \frac{\chi}{\sqrt{1+\chi}+1}.
\end{equation*}
If $\rho$ is strictly positive, so are $\delta$ and $\chi$, thus the system is strictly hyperbolic for $\theta>0$.

To classify the type of hyperbolic system we are dealing with, we analyse
the scalar product $\nabla_{\W}\lambda_\pm \cdot \mathbf{r}_\pm$ where $\mathbf{r}_\pm$
are right eigenvectors of the matrix $\dd F- \lambda\mathbf{I}$ relative to $\lambda_\pm$.  

\begin{prop}\label{prop:redfields}
For $\theta>0$, system \eqref{eq:red} is strictly hyperbolic with two genuinely
nonlinear fields for $(\rho,ru)\in(0,\infty)\times\mathbb{R}$.
\end{prop}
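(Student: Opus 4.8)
The plan is to verify strict hyperbolicity and then compute the genuine-nonlinearity scalar products $\nabla_{\W}\lambda_\pm\cdot\mathbf r_\pm$ directly, showing they never vanish on $(0,\infty)\times\mathbb R$. Strict hyperbolicity has essentially been established already in the text: from \eqref{eq:evalues}, $\lambda_+-\lambda_- = \sqrt{u^2+\theta\delta^2}/r$, which is strictly positive whenever $\theta>0$ and $\rho>0$ (so that $\delta(\rho)=2\sqrt{\rho r}>0$), even at $u=0$. So the two eigenvalues are real and distinct, and it remains only to check genuine nonlinearity of both fields.

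For the genuine-nonlinearity computation, first I would write down explicit right eigenvectors. From $\dd F(\W)=\begin{pmatrix} u/r & \rho/r \\ -u^2+\theta & 2u\end{pmatrix}$, a right eigenvector for $\lambda_\pm$ can be taken as $\mathbf r_\pm = \bigl(\rho/r,\ \lambda_\pm - u/r\bigr)^\intercal$, or after clearing denominators $\mathbf r_\pm = \bigl(\rho,\ r\lambda_\pm - u\bigr)^\intercal$; using \eqref{eq:evalues} one has $r\lambda_\pm - u = ru - u \pm \tfrac12(\sqrt{u^2+\theta\delta^2}\pm u) = \rho u \pm \tfrac12(\sqrt{u^2+\theta\delta^2}\pm u)$, which simplifies the bookkeeping. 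Then I would compute $\nabla_{\W}\lambda_\pm$ in the conservative variables $\W=(\rho,w)$ with $w=ru$; it is cleaner to use the chain rule through the non-conservative variables $(\rho,u)$, since $\lambda_\pm$ is given explicitly in terms of $\rho$ and $u$ in \eqref{eq:evalues}, and Lemma~\ref{lem:lemm_gal} guarantees that the scalar product $\nabla\lambda\cdot\mathbf r$ is invariant under this change of variables. So I would instead compute $\nabla_{(\rho,u)}\mu_\pm\cdot\mathbf s_\pm$, where $\mu_\pm(\rho,u)=\lambda_\pm$ as written and $\mathbf s_\pm = \dd G^{-1}\mathbf r_\pm$ is the corresponding eigenvector in the $(\rho,u)$ chart, with $G(\rho,u)=(\rho,ru)$.

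Carrying this out: $\partial_u\lambda_\pm = 1 \pm \tfrac{1}{2r}\bigl(\tfrac{u}{\sqrt{u^2+\theta\delta^2}}\pm 1\bigr)$ and $\partial_\rho\lambda_\pm = \pm\tfrac{1}{2}\partial_\rho\bigl(r^{-1}(\sqrt{u^2+\theta\delta^2}\pm u)\bigr)$, where $\delta^2 = 4\rho r = 4\rho+4\rho^2$ so $\partial_\rho(\delta^2)=4+8\rho=4(1+2\rho)$ and $\partial_\rho r = 1$. The eigenvector $\mathbf s_\pm$ in the $(\rho,u)$ variables is read off from $(\dd F_{(\rho,u)} - \lambda_\pm)\mathbf s_\pm=0$ where $\dd F_{(\rho,u)}$ is the Jacobian of the flux written in those variables (equivalently, the transport matrix of the quasilinear form); its first component can be normalized to $\rho$ and the structure $\mathbf s_\pm\propto(\rho,\,\text{something}\,)$ will make the pairing a rational function of $\rho,u$ and $\sqrt{u^2+\theta\delta^2}$. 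I would then show the resulting expression is, up to a nonvanishing factor, of the form (positive terms) $\pm$ (terms), and use that $\theta>0$, $\rho>0$ force $\sqrt{u^2+\theta\delta^2}>|u|$ strictly, so no cancellation can occur; an even simpler argument is available at $u=0$, where $\lambda_\pm = \pm\sqrt{\theta}\,\delta/(2r)$ reduces the check to $\partial_\rho\bigl(\delta/r\bigr) = \partial_\rho\bigl(2\sqrt{\rho/r}\,\bigr)\neq 0$, which holds since $\rho\mapsto\rho/r=\rho/(1+\rho)$ is strictly increasing, and then extend by continuity/monotonicity arguments, though a direct global computation is cleaner.

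The main obstacle I expect is purely computational bookkeeping: the eigenvalues in \eqref{eq:evalues} have a nested $\pm$ structure, so one must be careful that the ``outer'' and ``inner'' signs are correlated correctly when differentiating and when forming $\mathbf r_\pm$, and the resulting rational expression for $\nabla\lambda_\pm\cdot\mathbf r_\pm$ is somewhat unwieldy. The conceptual content is minor — once simplified, non-vanishing follows because every term that could cause a cancellation involves $\sqrt{u^2+\theta\delta^2}$, which strictly dominates $|u|$ when $\theta\delta^2>0$, i.e. precisely on the region $(\rho,ru)\in(0,\infty)\times\mathbb R$ under consideration. I would also remark that the degeneracy at $\rho=0$ (where $\delta=0$, $\lambda_+=\lambda_-=u$) is exactly why the statement is restricted to $\rho>0$, and that the $\theta=0$ case is genuinely different and treated separately later in the paper.
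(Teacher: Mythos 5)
Your proposal is correct and follows essentially the same route as the paper: strict hyperbolicity from $\lambda_+-\lambda_-=\sqrt{u^2+\theta\delta^2}/r>0$, then genuine nonlinearity computed in the non-conservative variables $(\rho,u)$ via Lemma~\ref{lem:lemm_gal} with eigenvectors normalized as $\mathbf s_\pm=(\rho,\mu_\pm-u)^\intercal$, concluding from the strict domination $\sqrt{u^2+\theta\delta^2}>|u|$ for $\rho,\theta>0$. Only your secondary aside would not stand on its own (checking at $u=0$ and ``extending by continuity'' gives non-vanishing only in a neighborhood of $u=0$, and the reduction there is to the full pairing $\nabla\mu_\pm\cdot\mathbf s_\pm$, not merely $\partial_\rho(\delta/r)\neq0$), but since your primary plan is the direct global computation, the argument is sound.
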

  
\begin{proof}
System \eqref{eq:hyp_cons} can be also written as a system in $\U:=(\rho,u)$:
\begin{equation}\label{eq:hyp_noncons}
	\partial_t G(\U)+\partial_x H(\U)=0
\end{equation}
where the functions $G(\U)=(\rho,ru)^\intercal$ and $H(\U)=(\rho
u,ru^2+\theta\rho)^\intercal$ are such that
\begin{equation*}
	\dd G(\U):=\begin{pmatrix} 1 & 0 \\  u & r \end{pmatrix},\qquad 
	\dd H(\U):=\begin{pmatrix}  u & \rho \\ u^2+\theta & 2ru \end{pmatrix}.
\end{equation*}
Let us set $\mu_{\pm}(\U)=\lambda_{\pm}(G(\U))$. 
In particular, $\mu_\pm\bigr|_{u=0}=\pm \sqrt{\theta\rho/r}$.
By Lemma~\ref{lem:lemm_gal}, it is equivalent to compute $\nabla_{\U}\mu_\pm \cdot \mathbf{s}_\pm$
where  $(\dd H -\mu_\pm \dd G)\mathbf{s}_\pm=0$.
In turn, this reduces to finding $\mathbf{s}_\pm$ such that $(u-\mu_\pm,\rho)\cdot\mathbf{s}_\pm=0$.
Let us choose $\mathbf{s}_\pm=\left(\rho,\mu_\pm-u\right)^{\intercal}$,
so that the functions $\U\mapsto \mathbf{s}_\pm(\U)$ are smooth on $(0+\infty)\times\R$.

The auxiliary function $\sigma\,:\,\mathbb{R}\to(-1,1)$, defined by $\sigma(x):={x}/{\sqrt{1+x^2}}$, see Fig.~\ref{fig:estimate},
is continuous, odd and such that
\begin{equation}\label{eq:sigmaprop}
	0\leq |\sigma(x)|\leq \min\left\{1, |x|\right\},\qquad \sigma'(x)=(1+x^2)^{-3/2}\,.
\end{equation}
\begin{figure}[!htb]
\includegraphics[width=10cm]{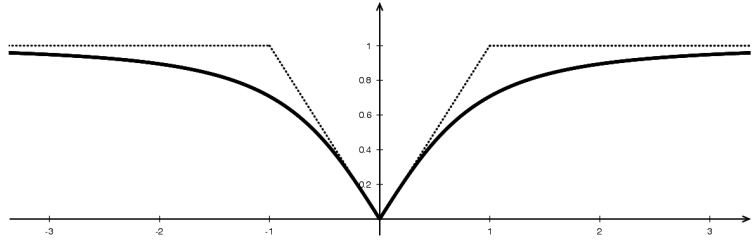}
\caption{\footnotesize    Graph of the function $x\mapsto |\sigma(x)|$ (continuous line)
compared to the one of $x\mapsto \min\left\{1, |x|\right\}$ (dotted line) for $x\in\mathbb{R}$.}
\label{fig:estimate}
\end{figure}

Moreover, $\sigma$ is invertible with inverse $\psi\,:\,(-1,1)\to\mathbb{R}$ given by $x=\psi(y):={y}/{\sqrt{1-y^2}}$.
In term of $\sigma$, the eigenvalues $\mu_\pm$ can be represented as
\begin{equation*}
	\mu_\pm(\U)=u\pm\frac{1}{2r}(1\pm \sigma)\sqrt{u^2+\theta\delta^2}
\end{equation*}
with $\sigma$ computed at $u/\sqrt{\theta\delta^2}$.

Since the gradient $\nabla_{\U} \mu_\pm=\left(\partial_\rho \mu_\pm, \partial_ u \mu_\pm\right)$ is given by
\begin{equation*}
	\partial_\rho \mu_\pm  (\U)=-\frac{(1\pm \sigma)u}{2r^2}\pm \frac{\theta}{r\sqrt{u^2+\theta\delta^2}}\,,\qquad
	\partial_ u \mu_\pm (\U)=1+\frac{1\pm \sigma}{2r}\,,
\end{equation*}
there holds
\begin{equation*} 
	\begin{aligned}
	\nabla_{\U} \mu_\pm (\U)&\cdot \mathbf{s}_\pm
	=\left(-\frac{(1\pm \sigma)u}{2r^2}\pm \frac{\theta}{r\sqrt{u^2+\theta\delta^2}},1+\frac{1\pm \sigma}{2r}\right)
		\cdot\left(\rho,\pm\frac{1\pm \sigma}{2r}\sqrt{u^2+\theta\delta^2}\right)\\
	&=-\frac{(1\pm \sigma)\rho u}{2r^2}\pm \frac{\theta\rho }{r\sqrt{u^2+\theta\delta^2}}
		\pm \frac{1\pm \sigma}{2r}\sqrt{u^2+\theta\delta^2}\pm\frac{(1\pm\sigma)^2}{4r^2}\sqrt{u^2+\theta\delta^2}\\
	&=\pm \frac{1\pm \sigma}{2r}\left\{\sqrt{u^2+\theta \delta^2}+ \frac{(1\pm \sigma)\sqrt{u^2+\theta \delta^2}}{2r}
		\mp \frac{\rho u }{r}\right\}\pm \frac{\theta\rho}{r\sqrt{u^2+\theta \delta^2}}.
	\end{aligned}
\end{equation*}
Since $r=1+\rho$ and $\sigma=u/\sqrt{u^2+\theta \delta^2}$, the three terms in braces can be recast as
\begin{equation*}
	\begin{aligned}
	\sqrt{u^2+\theta\delta^2}&+\frac{(1\pm \sigma)\sqrt{u^2+\theta \delta^2}}{2r}\mp\frac{\rho u}{r}
		=\left(1+\frac{1\pm \sigma}{2r}\mp\frac{\rho \sigma}{r}\right)\sqrt{u^2+\theta\delta^2}\\
	&=\left\{1+\frac{1\pm \sigma}{2}+\rho(1\mp\sigma)\right\}\frac{\sqrt{u^2+\theta\delta^2}}{r}
		\geq \frac{\sqrt{u^2+\theta\delta^2}}{r}\geq 0\,,
	\end{aligned}
\end{equation*}
with the equality holding only for $\U=\mathbf{0}$ in the case $\theta>0$.
Hence, for $\rho>0$, there hold
\begin{equation*}
	\nabla_{\W} \lambda_-\cdot \mathbf{r}_-=\nabla_{\U} \mu_-\cdot \mathbf{s}_-<
	0<\nabla_{\U} \mu_+\cdot \mathbf{s}_+=\nabla_{\W} \lambda_+\cdot \mathbf{r}_+\,,
\end{equation*}
 where we make use of Lemma~\ref{lem:lemm_gal}.
\end{proof}

\subsection{Shock solutions}
Shock waves of system \eqref{eq:hyp_cons} are special solutions $\W(x,t)=\mathrm W(y)$
depending only on the variable $y: =x-ct$ with the form of a pure jump
\begin{equation*}
	\W(x,t)=\mathrm W(y):=\left\{\begin{aligned}
		&\W_\ast	\quad &\textrm{if}\quad 	&y<0,\\
		&{\W}	\quad &\textrm{if}\quad	&y\geq 0.
		\end{aligned}\right. 
\end{equation*}
where $\W_\ast:=(\rho_\ast, r_\ast u_\ast)$ and $\W:=(\rho, ru)$.
In presence of Galilean invariance, we could focus without loss of generality on stationary solutions $\W$, i.e. $c=0$ and $y=x$.
Unfortunately, as observed in Remark \ref{rmk:invariance}, system \eqref{eq:red} does not possess such a symmetry
and the corresponding reduction cannot be considered.

In order to be weak solutions, such functions are forced to satisfy the Rankine--Hugoniot conditions \eqref{eq:gen_rh}.
For system \eqref{eq:red}, they 
 take the specific form 
\begin{equation}\label{eq:RH0}
	\left\{\begin{aligned}
	&-c \llbracket \rho \rrbracket + \llbracket \rho u \rrbracket=0,\\
	&-c \llbracket ru \rrbracket + \llbracket ru^2 + \theta \rho \rrbracket=0,
	\end{aligned}\right.
    \end{equation}
where  $\llbracket g \rrbracket=g-g_\ast$.

Given $\rho_\ast$ and $u_\ast$, let us show that these relations lead to $u$ being a function of $\rho$.
 If $\llbracket \rho\rrbracket =0$, then from the first equation in \eqref{eq:RH0},
we infer $\rho_\ast\llbracket u\rrbracket =0$. 
Hence, assuming $\rho_\ast>0$, we are forced to have $\llbracket u\rrbracket =0$, so that the solution is actually a constant state.
Being interested in non constant profiles, we assume $\llbracket \rho\rrbracket \neq 0$.
Then, the propagation speed can be expressed as
\begin{equation}\label{eq:explicitvelocity0}
	c= \frac{\llbracket \rho  u \rrbracket}{\llbracket \rho \rrbracket}.
\end{equation}
Next, we are going to use the two following relations, valid for any functions $f$ and $g$,
\begin{equation}\label{eq:jumpuseful}
 	\llbracket f g \rrbracket = \llbracket f  \rrbracket  g_\ast + f \llbracket   g \rrbracket\qquad\textrm{and}\qquad
	\llbracket f g^2\rrbracket =  \llbracket f \rrbracket g_\ast^2 + 2  f g_\ast \llbracket  g \rrbracket + f \llbracket  g \rrbracket^2\,.
\end{equation}
Substituting \eqref{eq:explicitvelocity0} in the identity \eqref{eq:RH0}, we obtain
\begin{equation*}
	 \llbracket \rho  u \rrbracket \llbracket  u \rrbracket + \llbracket \rho  u \rrbracket^2
		= \llbracket \rho  u \rrbracket \llbracket r  u \rrbracket 
		= \llbracket \rho \rrbracket \llbracket r u^2 \rrbracket  + \theta \llbracket \rho \rrbracket^2\,,
\end{equation*}
and, taking advantage of \eqref{eq:jumpuseful}, we infer
\begin{equation*}
	\rho_\ast r \llbracket  u \rrbracket^2 - \llbracket \rho \rrbracket  u_\ast  \llbracket  u \rrbracket - \theta \llbracket \rho \rrbracket^2=0.
\end{equation*}
Considering the form \eqref{eq:hyp_noncons} of the original system \eqref{eq:hyp_cons}, the set of admissible shocks
$\mathcal{H}_{\W_\ast}$ of a given state $\W_\ast=(\rho_\ast,r_\ast u_\ast)$, usually called {\it Hugoniot locus}, is  given by the union
of two distinct branches, here denoted by $\mathcal{H}_{\W_\ast,+}$ and $\mathcal{H}_{\W_\ast,-}$ (see Figure~\ref{f1}) 
\begin{equation}\label{eq:defus}
	\mathcal{H}_{\W_\ast,\pm}  = \left\{(\rho,ru_\pm)\,:\,\rho>0,u_\pm(\rho)
		=u_\ast\pm \dfrac{\llbracket \rho \rrbracket}{\rho_\ast}\cdot\dfrac{\sqrt{u_\ast^2+\theta \Delta^2}\pm u_\ast}{2r}\right\},
\end{equation}  
with $\Delta(\rho,\rho_\ast):=2\sqrt{\rho_\ast r}$.
\begin{figure}[!htb]
\includegraphics[height=8cm]{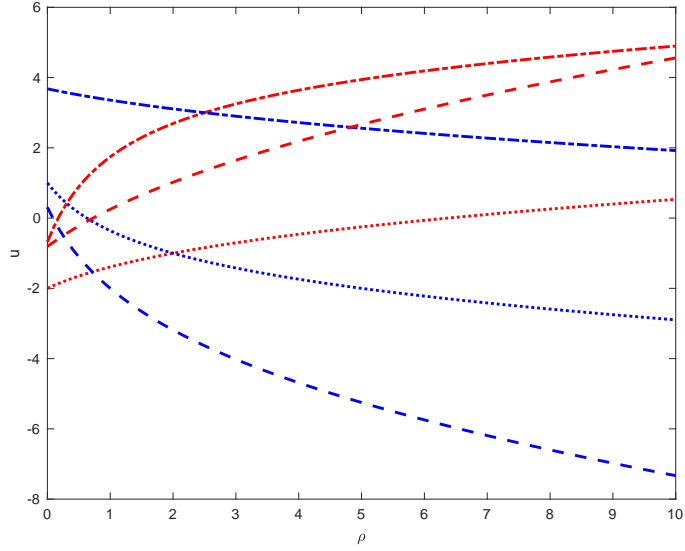}
\caption{\footnotesize  Hugoniot locus for several states $\U_\ast=(\rho_\ast, u_\ast)$.
Plots are given of several curves $\rho\mapsto u=u(\rho)$ defined by \eqref{eq:defus},  $\U_\ast$ being the intersection
point of the two curves drawn with the same line-specification (dotted, dashed or dot-dashed).}
\label{f1}
\end{figure}
Accordingly, along each branch, the shock speed is given by \eqref{eq:explicitvelocity0}, that becomes, using again \eqref{eq:jumpuseful},
\begin{equation}\label{eq:defcs}
	\begin{aligned}
	c_\pm(\rho;\W_\ast)=  u_\ast+\rho\frac{\llbracket  u \rrbracket}{\llbracket \rho \rrbracket} 
	= u_\ast \pm \frac{\rho}{\rho_\ast}\cdot\frac{\sqrt{ u_\ast^2 +\theta\Delta^2}\pm  u_\ast}{2r}.
	\end{aligned}
\end{equation}
Note that we can equally write 
 \begin{equation*}
	\begin{aligned}
	c_\pm(\rho;\W_\ast)= u+\rho_\ast\frac{ \llbracket  u \rrbracket}{\llbracket \rho \rrbracket}
	= u \pm \frac{\sqrt{u_\ast^2 +\theta\Delta^2}\pm u_\ast}{2r}.
	\end{aligned}
\end{equation*}
With the sign $(+)$, respectively $(-)$, $c_\pm$ is larger, resp. smaller, than both the left velocity $u_\ast$ and the right velocity $u$.

Specifically, we regard at the curves defined by \eqref{eq:defus} and \eqref{eq:defcs} as parametrizations of the states $\W$ that
can be connected to $\W_\ast$ by a pure discontinuity providing a weak solution to system \eqref{eq:hyp_cons} with corresponding
parameter given by the density  $\rho\in (0,+\infty)$.
As a matter of fact, we observe that \eqref{eq:defcs} satisfies
\begin{equation*}
	c_\pm(\rho_\ast;\W_\ast):=\lim_{\rho\to \rho_\ast} c_\pm(\rho;\W_\ast)=\lambda_\pm(\W_\ast)\quad\textrm{for}\;\;\rho_\ast>0
	\qquad\textrm{and}\qquad c_\pm(0;\W_\ast)= u_\ast\,.
\end{equation*}
Moreover, since
\begin{equation*}
	\begin{aligned}
 	\partial_\rho c_\pm (\rho;\W_\ast)
        &=\pm\frac{1}{2\rho_\ast r}\left\{\frac{\sqrt{u_\ast^2 +\theta\Delta^2}\pm u_\ast}{r}
            +\frac{2\theta \rho_\ast\rho}{\sqrt{u_\ast^2 +\theta\Delta^2} }\right\}.
	\end{aligned}
\end{equation*}
there hold, for $\rho_\ast>0$,
\begin{equation*}
	\partial_\rho c_-(\rho;\W_\ast)<0<\partial_\rho c_+(\rho;\W_\ast).
\end{equation*}
As a consequence, we infer the equivalences valid for any $\rho$ between $\rho_\ast$ and $\bar \rho$
\begin{equation}\label{eq:liu}
	\begin{aligned}
	&c_+(\bar \rho;\W_\ast)-c_+(\rho;\W_\ast)=\int_\rho^{\bar \rho} \partial_\rho c_+(\xi;\W_\ast)\,\dd \xi<0
		\quad\iff\quad 0\leq \bar \rho<\rho\,,\\
	&c_-(\bar \rho;\W_\ast)-c_-(\rho;\W_\ast)=\int_\rho^{\bar \rho} \partial_\rho c_-(\xi;\W_\ast)\,\dd \xi<0
		\quad\iff\quad 0\leq \rho<\bar \rho\,.
	\end{aligned}
\end{equation}
In particular, the (strict) Liu's entropy criterion is satisfied for $\bar\rho<\rho_\ast$
in the case of sign $+$ and for $\rho_\ast<\bar\rho$ in the case of sign $-$ (see \cite{Liu}).
Since the system is genuinely nonlinear, this is equivalent to Lax's entropy condition for weak shocks \cite[Theorem~8.4.2]{Daf}.
 
\subsection{Entropy for the inviscid Burgers fluid-particle system}\label{sec:ent:fkm}

The quantity
\begin{equation}\label{eq:entropyB}
	\mathscr{H}(f,u):=\underbrace{\dfrac{1}{2}u^2}_{fluid}+\underbrace{\int H(f,v)\ud v}_{particles}
	\quad\textrm{where}\;\; H(f,v):=f\bigl(\tfrac{1}{2}v^2 + \theta \ln f\bigr)
\end{equation}
defines an entropy for the fluid-kinetic model \eqref{eq:cp2s}.
Indeed, as previously observed, the kinetic equation in \eqref{eq:cp2s} can be rephrased as
\begin{equation*}
	\partial_t f _\eps+v\partial_x f_\eps=\dfrac{\theta}{\eps}\partial_v\left\{M_{u_\eps}\partial_v(M_{u_\eps}^{-1}f_\eps)\right\}\,,
\end{equation*}
which involves the maxwellian $M_u$ defined in \eqref{eq:maxwell}, to be considered coupled with
\begin{equation*}
	\begin{aligned}
	\partial_t u_\eps + \partial_x u_\eps^2
	=\dfrac{\theta}{\eps} \int \left\{M_{u_\eps}\partial_v(M_{u_\eps}^{-1}f_\eps)-\partial_vf_\eps\right\}\ud v
	=\dfrac{\theta}{\eps} \int M_{u_\eps}\partial_v(M_{u_\eps}^{-1}f_\eps)\ud v.
 	\end{aligned}
\end{equation*}
since $f_\eps$ is assumed to vanish at $\infty$.
Next, setting 
\begin{equation*}
	\mathscr{G}(f,u):=\tfrac{2}{3}u^3+\int vf\left(\tfrac{1}{2}v^2+\theta\ln f\right)\ud v\,,
\end{equation*}
we infer, integrating by parts,
\begin{equation*}
	\begin{aligned}
  	\partial_t \mathscr{H}&
	=\int \left\{\tfrac{1}{2}v^2+\theta(\ln f_\eps+1)\right\}\partial_t f_\eps \ud v+u_\eps\partial_t u_\eps\\
	&=-\partial_x\mathscr{G}+\dfrac{\theta}{\eps} \int u_\eps M_{u_\eps}\partial_v(M_{u_\eps}^{-1}f_\eps)\ud v\\
	&\quad +\dfrac{\theta}{\eps}\int \left\{\tfrac{1}{2}v^2+\theta(\ln f_\eps+1)\right\}\partial_v\left[M_{u_\eps}\partial_v(M_{u_\eps}^{-1}f_\eps)\right]\ud v\\
  	&=-\partial_x\mathscr{G}
		+\dfrac{\theta}{\eps}\int \partial_v(M_{u_\eps}^{-1}f_\eps)\left\{M_{u_\eps}(u_\eps-v)-\theta M_{u_\eps} f_\eps^{-1} \partial_v f_\eps\right\}\ud v\\
  	&=-\partial_x\mathscr{G}
		-\dfrac{\theta^2}{\eps} \int f_\eps^{-1}\partial_v(M_{u_\eps}^{-1}f_\eps)\left\{M_{u_\eps}(\partial_v f_\eps)-(\partial_v M_{u_\eps})f_\eps\right\}\ud v\,,
	\end{aligned}
\end{equation*}
since, as previously seen, $M_u(u-v)=\theta \partial_v M_u$.
Therefore, we deduce the estimate 
\begin{equation*}
	\partial_t \mathscr{H}+\partial_x \mathscr{G}
		=-\dfrac{\theta^2}{\eps}\int M_{u_\eps}^2f_\eps^{-1}\left\{\partial_v(M_{u_\eps}^{-1}f_\eps)\right\}^2\ud v\leq 0\,.
\end{equation*}
Next, let us focus on the regime $\eps\to 0^+$ for which the formal ansatz \eqref{eq:ansatz} is assumed to hold.
As a consequence, inspired by the kinetic representation of conservation laws \cite{Per}, we guess an entropy
for the limit system by evaluating the functional $\mathscr{H}$ at the equilibrium $\rho_\eps M_{u_\eps}$.

Preliminarly, let us observe that, knowing that
\begin{equation}\label{eq:moments}
	\int M_u\ud v=1,\qquad   \int (v-u) M_u\ud v=0,\qquad \int |v-u|^2 M_u\ud v =\theta\,,
\end{equation}
there holds
\begin{equation*}
	\int  v^2 M_{u} \ud v = \int  \left\{u^2+2u(v-u)+|v-u|^2\right\} M_{u} \ud v = u^2+\theta
\end{equation*}
Hence, inspired by the kinetic representation of conservation laws \cite{Per}, the formal identity
\begin{equation*}	
	\begin{aligned}
	\mathscr{H}(f_\eps,u_\eps)&\simeq \mathscr{H}(\rho_\eps M_{u_\eps},u_\eps)
		=\tfrac{1}{2}u_\eps^2+\int \rho_\eps M_{u_\eps} \left\{\tfrac{1}{2}v^2 + \theta \ln(\rho_\eps M_{u_\eps})\right\} \ud v\\
	&=\tfrac{1}{2}r_\eps u_\eps^2+\tfrac12\theta\rho_\eps +\rho_\eps \int M_{u_\eps} \left\{\theta \ln \rho_\eps
		-\tfrac12\theta \ln(2\pi\theta)-\tfrac{1}{2}(v-u)^2\right\} \ud v\\
	&=\tfrac12r_\eps u_\eps^2 +\theta\rho\left\{\ln\rho_\eps-\tfrac12\ln(2\pi\theta)\right\}
	\end{aligned}
\end{equation*}
suggests the (simplified) choice $\eta(\U)=\tfrac12ru^2 +\theta\rho\ln\rho$, obtained by disregarding the linear term in $\rho$
(since we already know that $\rho$ satisfies a convection equation), with corresponding entropy flux given
by $q(\U)=\bigl(\tfrac 23+\tfrac12\rho\bigr)u^3+\theta\rho(\ln\rho+1)u$.
The pair $(\eta,q)$ is an entropy/entropy flux pair for \eqref{eq:hyp_noncons}.
Indeed, let us set 
\begin{equation*}
	Q:=\partial_t (\rho M_u)+v\partial_x( \rho M_u).
\end{equation*}
Using again \eqref{eq:moments},  we infer for any (smooth) solution $(\rho, u)$ of \eqref{eq:hyp_noncons}
\begin{equation*}
	\int \begin{pmatrix} 1\\ v \end{pmatrix} Q\ud v= -
	\begin{pmatrix} 0 \\ \partial_ t u+\partial_x u^2 \end{pmatrix}
\end{equation*}
since integration with respect to $v$ yields the system of conservation laws.
It follows that
\begin{equation*}
	\begin{aligned}
	\partial_t \eta+\partial_x q
	&= \partial_t \left(\tfrac{1}{2}u^2\right)+\partial_x \left(\tfrac{2}{3}u^3\right)
		+\int Q\left\{\tfrac{1}{2}\,v^2+\theta\ln(\rho M_u)-\tfrac12\theta\ln(2\pi\theta)+1\right\}\ud v\\
	&=\partial_t \left(\tfrac{1}{2}u^2\right)+\partial_x \left(\tfrac{2}{3}u^3\right)+\tfrac12\int Q\left(v^2-|v-u|^2\right)\ud v\\
	&=\partial_t \left(\tfrac{1}{2}u^2\right)+\partial_x \left(\tfrac{2}{3}u^3\right)+u\int v\,Q\ud v 
		=0.
\end{aligned}
\end{equation*}
In terms of the  variables $\W=(\rho,w)$, the entropy $\zeta$ is given by
\begin{equation}\label{eq:entropyBfp}
	\zeta(\W)=\frac{w^2}{2r}+\theta\rho\ln \rho.
\end{equation}
Upon differentiation, denoting by the same symbols $\nabla_{\W}\zeta$ and $\mathrm d^2_{\W}\zeta$
the corresponding vector/matrix computed both at $\W$, we obtain
 the following expressions that will be useful later on
\begin{equation}\label{eq:ent2_2}
	\begin{aligned}
	\nabla_{\W}\zeta (\W) ^\intercal &=\left(-{w^2}/(2r^2)+\theta(1+\ln\rho),{w}/{r}\right)
		=\left(-u^2/2+\theta(1+\ln\rho),u\right)\,,\\
	\DD^2_{\W}\zeta (\W)
	&=\begin{pmatrix} 	{w^2}/{r^3}+{\theta}/{\rho} 	& -{w}/{r^2}	\\
					-{w}/{r^2}		 		& {1}/{r}		\end{pmatrix}
	=\begin{pmatrix}	{u^2}/{r}+{\theta}/{\rho} 	& -{u}/{r}		\\
					-{u}/{r}			 	& {1}/{r}		\end{pmatrix}.
	\end{aligned}
\end{equation}
In addition, $\zeta$ is a convex function, since the hessian $\DD^2_{\W}\zeta$ is positive definite. 

The function $\zeta$ defined in \eqref{eq:entropyBfp} furnishes an entropy for system \eqref{eq:hyp_cons}.
Hence, the matrix $\mathbf{X}:=\DD^2_{\W}\zeta$ is a symmetrizer for the flux $F$ as can be directly checked
(in fact, such property holds true for general hyperbolic systems, see \cite{Daf, Mock80}).

\subsection{Viscous corrections leading to \eqref{eq:bfp}}

We now use the Chapman-Enskog expansion to get the diffusive correction associated to system \eqref{eq:red}.
Specifically, we search for a hydrodynamic model with an appropriate modification, namely $(\rho_\eps,u_\eps)$
(where the dependence on $\eps$ is explicitly stated) satisfies
\begin{equation*}
	\partial_t \begin{pmatrix} \rho_\eps \\ r_\eps u_\eps\end{pmatrix}
	+\partial_x\begin{pmatrix} \rho_\eps u_\eps \\ r_\eps u_\eps^2 +\theta \rho_\eps \end{pmatrix}
	=\mathscr O(\eps).
\end{equation*}
In order to define the correction term, we expand the solution of the kinetic equation as
\begin{equation*}
	f_\eps=\rho_\eps M_{u_\eps} +\eps g_\eps,
	\qquad \int f_\eps\ud v=\rho_\eps,
	\qquad \int g_\eps\ud v=0,
\end{equation*}
where $M_u$ is the Maxwellian distribution defined in \eqref{eq:maxwell}.
Recalling the identity \eqref{eq:fp_form}, the system can be rewritten as
\begin{equation*}
	( \partial_t+v\partial_x)(\rho_\eps M_{u_\eps}+\eps g_\eps) = L_{u_\eps}(g_\eps),
\end{equation*}
coupled with the equation for $u_\eps$
\begin{equation*}
	\partial_t u_\eps + \partial_x u_\eps^2 =  \int (v-u_\eps) g_\eps\ud v.
\end{equation*}
Note that the integration of the kinetic equation yields
\begin{equation}\label{eq:hyd1_1}
	\partial_t \rho_\eps + \partial_x (\rho _\eps u_\eps) +\eps \partial_x\int (v-u_\eps)g_\eps\ud v=0,
\end{equation}
and
\begin{equation}\label{eq:hyd1_2}
	\begin{aligned}
	\partial_t \left(\rho_\eps u_\eps+ \eps \int vg_\eps\ud v\right) + \partial_x \left(\rho_\eps u_\eps^2+\theta\rho_\eps+ \eps \int v^2 g\ud v \right)
	 	&=-\int (v-u_\eps) g_\eps\ud v.\\
 		&= -\partial_t u_\eps - \partial_x u_\eps^2.
 	\end{aligned}
\end{equation}
We compute 
\begin{equation*}
	\begin{aligned}
 	(\partial_t+v\partial_x)(\rho_\eps M_{u_\eps})
	&= M_{u_\eps}\bigl\{\partial_t\rho_\eps +\partial_x (\rho_\eps u_\eps)\bigr\}
		+ (v-u_\eps)M_{u_\eps}\partial_x \rho_\eps - M_{u_\eps}\rho_\eps\partial_x u_\eps \\
 	&\quad +\dfrac{(v-u_\eps)}{\theta} \rho_\eps  M_{u_\eps}(\partial_t u _\eps +u_\eps\partial_x  u_\eps)
 		+ \rho_\eps  M_{u_\eps} \dfrac{ |v-u_\eps|^2}{\theta}\partial_x  u_\eps\\
 	&=\dfrac{(v-u_\eps)}{\theta} \rho_\eps M_{u_\eps}\left\{\int (v-u_\eps) g_\eps\ud v
		+ \theta\dfrac{1}{\rho_\eps} \partial_x \rho_\eps -   u_\eps\partial_x u_\eps\right\}\\
 	&\quad +\rho_\eps  M_{u_\eps}\left(\dfrac{|v-u_\eps|^2}{\theta}-1\right)\partial_x  u_\eps \\
	& = L_{u_\eps}(g_\eps) - \eps\left(\partial_t+v\partial_x\right)g_\eps.
 	\end{aligned}
\end{equation*}
From now on, we neglect the last $\mathscr
O(\eps)$ terms and thus obtain a relation that defines $g_\eps$ by inverting
$L_{u_\eps}$ as we are going to detail now.
Multiplying and integrating over $v$, we find
\begin{equation}\label{eq:intvg}
  	\int v g_\eps\ud v=\int (v-u_\eps) g_\eps\ud v
	= \dfrac{1}{r_\eps}\left(\rho_\eps u_\eps\partial_x u_\eps-\theta\partial_x \rho_\eps\right).
\end{equation}
 Hence, we are led to 
\begin{equation*}
 	L_{u_\eps}(g_\eps)=\dfrac{\theta(v-u_\eps)M_{u_\eps}}{r_\eps}\left(\theta \partial_x\rho_\eps- \rho_\eps u_\eps \partial_x u_\eps\right)
 		+ \rho_\eps  M_{u_\eps}\left(dfrac{|v-u_\eps|^2}{\theta}-1\right)\partial_x  u_\eps.
\end{equation*}
Observe that the integral with respect to $v$ of all terms in the right-hand side vanishes.   
Bearing in mind that 
\begin{equation*}
	L_0(vM_0)=-vM_0 \qquad\textrm{and}\qquad
	L_0\left(\left(\dfrac{v^2}{\theta} -1\right)M_0\right)=-2\left(\dfrac{v^2}{\theta} -1\right)M_0,
\end{equation*}
we obtain 
\begin{equation*}
  	g_\eps=-\dfrac12\left(\dfrac{|v-u_\eps|^2}{\theta}-1\right) \rho_\eps M_{u_\eps} \partial_x  u_\eps
  		-\dfrac{1}{\theta}\dfrac{(v-u_\eps) M_{u_\eps}}{ r_\eps}\left(\theta \partial_x\rho_\eps - \rho_\eps u_\eps\partial_x u_\eps\right).
\end{equation*}
For further purposes, observe that 
\begin{equation}\label{eq:intv2g}
	\begin{aligned}
 	\int v^2 g_\eps\ud v &=\int (v-u_\eps)^2 g_\eps \ud v + 2u_\eps \int v g_\eps\ud v \\
 	&= \dfrac{2 u_\eps}{r_\eps} \left(\rho_\eps u_\eps\partial_x u_\eps-\theta \partial_x\rho_\eps+ \right)
		-\theta \rho_\eps\partial_x u_\eps \,.
    	\end{aligned}
\end{equation}
 We are now going back to the hydrodynamic 
 system \eqref{eq:hyd1_1}-\eqref{eq:hyd1_2}, where we similarly get rid of terms of order higher than $\mathscr O(\eps)$.
 To this end, we introduce a convenient change of variables  by setting  
\begin{equation*}
	w_\eps := r_\eps u_\eps + \eps \int vg_\eps\ud v.
\end{equation*}
Moreover, we shall replace the quantities arising in the previous expression by their first order approximations:
\begin{equation*}
	\begin{aligned}
 	&\partial_x u_\eps		&\rightsquigarrow\quad&		- \frac{w_\eps}{r_\eps^2}\,\partial_x \rho_\eps+\frac{1}{r_\eps}\,\partial_ x w_\eps\,,\\
  	&u_\eps \partial_x u_\eps	&\rightsquigarrow\quad& 		\frac{w_\eps }{r_\eps^2}\partial_ x w_\eps- \frac{w^2_\eps}{r_\eps^3}\partial_x \rho_\eps\,,
	\end{aligned}
\end{equation*}
and
\begin{equation*}
	\begin{aligned} 
   	&\eps \int v g_\eps\ud v		&\rightsquigarrow\quad& 		I_{1,\eps}= \frac\eps{r_\eps}
		\left(\frac{ \rho_\eps w_\eps }{r_\eps^2}\partial_ x w_\eps
		- \frac{ \rho_\eps w^2_\eps}{r_\eps^3}\partial_x \rho_\eps-\theta\partial_x \rho_\eps\right),\\
 	&\eps \int v^2 g_\eps\ud v		&\rightsquigarrow\quad&		I_{2,\eps}=-\eps \theta\rho_\eps\left(\frac{1}{r_\eps}\partial_ x w_\eps
		-\frac{w_\eps}{r_\eps^2}\partial_x \rho_\eps\right)+\frac{2w_\eps}{r_\eps} I_{1,\eps},
	\end{aligned}
\end{equation*}
where the last two expressions should be compared to \eqref{eq:intvg} and \eqref{eq:intv2g}, respectively.
Therefore, based on these approximations, equality \eqref{eq:hyd1_1} leads to 
 \begin{equation}\label{eq:diff1}
 	\begin{aligned}
	 \partial_t\rho_\eps + \partial_x \left(\frac{\rho_\eps w_\eps}{r_\eps}\right)
	 &=\eps \partial_x\left\{\left(\frac{\rho_\eps}{r_\eps}-1\right)I_{1,\eps} \right\}\\
 	&=\eps \partial_x\left\{\left(\frac{\rho_\eps w_\eps^2}{r_\eps^5}+\frac{\theta}{r_\eps^2}\right)
		 \partial_ x\rho_\eps-\frac{\rho_\eps w_\eps }{r_\eps^4} \partial_ x w_\eps\right\}.
	 \end{aligned}
\end{equation}  
 Next, for relation \eqref{eq:hyd1_2}, approximating $u_\eps^2 $ by $\dfrac{w_\eps^2}{r_\eps^2} -  \dfrac{2\eps\,w_\eps^2}{r_\eps^2}I_{1,\eps}$, we get
 \begin{equation}\label{eq:diff2}
 	\begin{aligned}
	\partial_t w_\eps + \partial_x \left(\frac{w_\eps ^2}{r_\eps} +\theta \rho _\eps\right)
	&= -\eps \left(I_{2,\eps} - \frac{2w_\eps}{r_\eps}I_{1,\eps}\right)\\
	&=\eps \partial_x \left(-\frac{\theta \rho_\eps w_\eps} {r_\eps^2}\partial_x \rho_\eps
		+\frac{\theta \rho_\eps}{r_\eps} \partial_x w_\eps\right).
	\end{aligned}
\end{equation}  
Dropping  for shortness the dependence with respect to $\eps$, we end up with the second-order system 
in the  variable $\W=(\rho,w)$ which is
\begin{equation}\label{eq:sys_fin}
	\partial_t \W +\partial_x F(\W)=\eps\partial_x\bigl\{\mathbf{D}(\W)\partial_x  \W \bigr\}
\end{equation}
with the flux $F$ given in \eqref{eq:WandF_Bfp} and the diffusion matrix $\mathbf{D}$ defined as
\begin{equation}\label{eq:diffusionBfp}
	\mathbf{D}(\W):=\mathbf{D}_0(\W)+\theta\,\mathbf{D}_1(\W),
\end{equation}
where
\begin{equation*}
	\mathbf{D}_0(\W):=\frac{\rho w}{r^5}	\begin{pmatrix}	w & -r \\ 0 & 0 \end{pmatrix}
						=\frac{\rho u}{r^3}	\begin{pmatrix}	u & -1 \\ 0	& 0 	\end{pmatrix}
\end{equation*}
and
\begin{equation*}
	\mathbf{D}_1(\W):=\frac{1}{r^2}	\begin{pmatrix} 1 & 0 \\ -\rho w & \rho\,r \end{pmatrix}
						=\frac1r\begin{pmatrix}	1/r	& 0	\\ -\rho u			&  \rho	\end{pmatrix}
\end{equation*}

\begin{rmk}\rm 
Since system \eqref{eq:hyp_noncons} is not invariant under Galilean transformations, the same curse occurs for the extended model \eqref{eq:sys_fin}. 
Moreover, it can be easily checked that invariance with respect to space reversal also holds for such a higher order system.
\end{rmk}

Once more, recalling \cite[Corollary 2.2]{MaPe} and having already verified that $\ud^2 \eta\ud F$ is symmetric,
it is enough to show that, choosing $\mathbf{X}:=\ud^2 \eta$, the modified diffusion term $\mathbf{X}\mathbf{D}$ is positive definite.
Indeed, using the shorthand notation $\chi=1+\rho r$, we compute the composition
 \begin{equation*}
	 \mathbf{X}\mathbf{D}=\frac{1}{\rho\,r^4}\begin{pmatrix}
	\rho^2 u^4+\theta(1+\chi)\rho\,r u^2+\theta^2r^2 &-\rho u(\rho u^2+\theta\chi r)\\
 	-\rho u(\rho u^2+\theta\chi r) &\rho^2(u^2+\theta\,r^2)
 		\end{pmatrix}
\end{equation*}
which we observe to be symmetric too.
Moreover, the trace $\textrm{tr}(\mathbf{X}\mathbf{D})$ is clearly strictlypositive for $\rho>0$ and $\theta>0$. 
By the Binet Theorem for determinants, there holds
\begin{equation*}
	\det\bigl(\mathbf{X}\mathbf{D}\bigr)
		=\det\mathbf{X}\cdot\det\mathbf{D}\\
		=\frac{\theta}{\rho r}\left(\dfrac{\theta\,\rho^2 u^2}{r^3}+\dfrac{\theta^2\rho}{r^2}-\dfrac{\theta\,\rho^2\,u^2}{r^3}\right)
		=\dfrac{\theta^3}{r^3},
\end{equation*}
having used the explicit form of $\mathbf{D}$ given in \eqref{eq:diffusionBfp}.
Therefore, we infer that $\mathbf{X}\mathbf{D}$ is symmetric and positive-definite for $\theta>0$.
 
We summarize our findings in a concise statement.

\begin{prop}\label{prop:stab_bfp}
For any $\theta>0$, then system \eqref{eq:sys_fin} with the flux $F$ given in \eqref{eq:WandF_Bfp} and
the diffusion matrix $\mathbf{D}$ as in \eqref{eq:diffusionBfp} is strictly stable in the sense of  Definition~\ref{def:def2}.
\end{prop}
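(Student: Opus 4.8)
The plan is to exhibit the entropy Hessian as a symmetrizer and then invoke the sufficient condition for strict stability recalled right after Proposition~\ref{prop:ent_form}: it suffices to produce, at the reference state, a symmetric positive definite matrix $\mathbf{X}$ with $\mathbf{X}\mathbf{A}$ symmetric and $\mathbf{X}\mathbf{D}$ positive definite (not necessarily symmetric), this being a purely pointwise algebraic condition. Concretely, I would fix a state $\W_\ast=(\rho_\ast,r_\ast u_\ast)$ with $\rho_\ast>0$, set $\mathbf{A}:=\dd F(\W_\ast)$, $\mathbf{D}:=\mathbf{D}(\W_\ast)$ with $\mathbf{D}$ as in \eqref{eq:diffusionBfp}, and choose $\mathbf{X}:=\DD^2_{\W}\zeta(\W_\ast)$ with $\zeta$ the entropy \eqref{eq:entropyBfp}.

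The first two requirements are immediate from the material already assembled. The matrix $\mathbf{X}$ is symmetric and positive definite because $\zeta$ is convex, as recorded right after \eqref{eq:ent2_2}; and $\mathbf{X}\mathbf{A}=\DD^2_{\W}\zeta(\W_\ast)\,\dd F(\W_\ast)$ is symmetric because the Hessian of an entropy always symmetrizes the flux Jacobian (cf.\ the remarks preceding Proposition~\ref{prop:ent_form} and \cite{Daf,Mock80}), a fact which can in any case be checked directly from \eqref{eq:ent2_2} and the explicit form of $\dd F$ given in Section~\ref{sec:Bur}.

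The crux is the positive definiteness of $\mathbf{X}\mathbf{D}$. I would compute the $2\times 2$ product from the explicit expressions \eqref{eq:ent2_2} and \eqref{eq:diffusionBfp}, using the shorthand $\chi=1+\rho r$, and observe two things: the product turns out to be symmetric, and its trace $\mathrm{tr}(\mathbf{X}\mathbf{D})$ is manifestly strictly positive for $\rho_\ast,\theta>0$. For the determinant I would use multiplicativity (Binet), $\det(\mathbf{X}\mathbf{D})=\det\mathbf{X}\,\det\mathbf{D}$, together with $\det\DD^2_{\W}\zeta=\theta/(\rho_\ast r_\ast)$ read off from \eqref{eq:ent2_2} and $\det\mathbf{D}=\theta^2\rho_\ast/r_\ast^2$ read off from \eqref{eq:diffusionBfp}; a short computation exhibits the cancellation of the two $u^2$-terms, leaving $\det(\mathbf{X}\mathbf{D})=\theta^3/r_\ast^3>0$. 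For a symmetric $2\times2$ matrix, positive trace together with positive determinant forces both eigenvalues to be real and strictly positive, hence $\mathbf{X}\mathbf{D}$ is positive definite. This determinant identity is the one genuinely computational step; I expect it to be the main (and essentially only) obstacle, everything else reducing to bookkeeping of facts already available in Sections~\ref{sec:Gal} and~\ref{sec:Bur}.

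Finally, with $\mathbf{X}=\DD^2_{\W}\zeta(\W_\ast)$ satisfying all three properties, \cite[Corollary 2.2]{MaPe} (equivalently, the criterion stated after Proposition~\ref{prop:ent_form}) yields strict stability of the linearization \eqref{eq:sys_lin} at $\W_\ast$, equivalently the spectral estimate $\mathrm{Re}\,\lambda_j(\xi)\le -\delta|\xi|^2$ of Theorem~\ref{th:majdapego21}. Since $\W_\ast$ is an arbitrary admissible reference state (any $\rho_\ast>0$), this establishes that system \eqref{eq:sys_fin} with the flux \eqref{eq:WandF_Bfp} and the diffusion matrix \eqref{eq:diffusionBfp} is strictly stable in the sense of Definition~\ref{def:def2}, which is the assertion of the proposition.
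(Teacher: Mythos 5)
Your proposal is correct and follows essentially the same route as the paper: choose $\mathbf{X}=\DD^2_{\W}\zeta$, use that it symmetrizes $\dd F$, and verify positive definiteness of $\mathbf{X}\mathbf{D}$ via its symmetry, positive trace, and the Binet factorization $\det(\mathbf{X}\mathbf{D})=\det\mathbf{X}\cdot\det\mathbf{D}$ in which the $u^2$-terms cancel, before invoking \cite[Corollary 2.2]{MaPe}. No gaps.
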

 
Next, having already verified the validity of Liu's entropy conditions,  we directly apply \cite[Corollary~2]{MaPe}
to establish the existence of weak viscous shocks, i.e. a solution to the two-dimensional ODE system
\begin{equation}\label{eq:profile}
	\eps\mathbf{D}(\mathrm W)\frac{\dd \mathrm W}{\dd\,y}=F(\mathrm W)-F(\W_\ast)- c(\mathrm W-\W_\ast),
\end{equation}
satisfying the asymptotic conditions
\begin{equation}\label{eq:asymptotics}
	\lim_{y\to -\infty} \mathrm W(y)=\W_\ast,\qquad \lim_{y\to +\infty} \mathrm W(y)=\W_\times
\end{equation}
with the propagation speed $c$ given by the Rankine--Hugoniot conditions \eqref{eq:RH0} and $\W_\times$
sufficiently close to $\W_\ast$.

We summarize our result in a synthetic statement whose proof follows from the results taken from \cite{MaPe}
together with the discussion relative to the validity of Liu's entropy condition \eqref{eq:liu}.

\begin{theorem}\label{th:th1}
Let the triple $(\W_\ast,\W_\times,c)$ be such that the Rankine--Hugoniot conditions \eqref{eq:RH0} is satisfied.
The strictly stable system \eqref{eq:efp} supports weak shock profiles -- i.e. there exists $\delta>0$ such that  if $|\W_\times- \W_\ast|\leq \delta$
there exists a function $y\mapsto \mathrm W^\eps(y)$ with ${\sup_{y\in\R}|\mathrm W^\eps(y)-\W_\ast|\leq \delta}$,
solution to \eqref{eq:profile} with asymptotics \eqref{eq:asymptotics}-- if and only Liu's criterion \eqref{eq:liu} is satisfied,
that is, $\bar{\rho}<\rho_\ast$ for the sign $+$ in the choice of $c$, and $\rho_\ast<\bar{\rho}$ for the sign $-$.
\end{theorem}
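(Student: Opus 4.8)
The plan is to deduce the statement directly from the general existence theory for small-amplitude viscous shock profiles of hyperbolic–parabolic systems developed in \cite{MaPe}, whose hypotheses have in fact all been established in the preceding subsections. Concretely, I would first recall that system \eqref{eq:sys_fin} is strictly hyperbolic with two genuinely nonlinear characteristic fields on $(0,\infty)\times\R$ (Proposition~\ref{prop:redfields}); that it carries the strictly convex entropy $\zeta$ of \eqref{eq:entropyBfp}, whose Hessian $\mathbf{X}=\DD^2_{\W}\zeta$ symmetrizes the flux $F$ and makes $\mathbf{X}\mathbf{D}$ symmetric with $\det(\mathbf{X}\mathbf{D})=\theta^3/r^3>0$, hence positive definite for $\theta>0$; and that, consequently, the linearization of \eqref{eq:sys_fin} at any state $\W_\ast$ with $\rho_\ast>0$ is strictly stable (Proposition~\ref{prop:stab_bfp}). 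Since $\det\mathbf D>0$ for $\theta>0$, the profile equation \eqref{eq:profile} can moreover be recast as a genuine planar vector field $\tfrac{\dd \mathrm W}{\dd y}=\tfrac1\eps\mathbf D(\mathrm W)^{-1}\bigl(F(\mathrm W)-F(\W_\ast)-c(\mathrm W-\W_\ast)\bigr)$. These are precisely the structural assumptions under which \cite[Corollary~2]{MaPe} operates.

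For the ``if'' direction, given a Rankine–Hugoniot triple $(\W_\ast,\W_\times,c)$ with $\W_\times$ on one of the branches $\mathcal H_{\W_\ast,\pm}$ of \eqref{eq:defus} and $c=c_\pm(\bar\rho;\W_\ast)$ as in \eqref{eq:defcs}, I would invoke the monotonicity $\partial_\rho c_-<0<\partial_\rho c_+$ recorded just before \eqref{eq:liu}: this shows that Liu's criterion \eqref{eq:gen_liu}–\eqref{eq:liu} is equivalent to $\bar\rho<\rho_\ast$ on the $+$ branch and to $\rho_\ast<\bar\rho$ on the $-$ branch, and — both fields being genuinely nonlinear — to Lax's condition for weak shocks \cite[Theorem~8.4.2]{Daf}. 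Then \cite[Corollary~2]{MaPe} yields, for $|\W_\times-\W_\ast|$ below some threshold $\delta$, a heteroclinic orbit $y\mapsto\mathrm W^\eps(y)$ of \eqref{eq:profile} with the prescribed asymptotics \eqref{eq:asymptotics} and with $\sup_{y}|\mathrm W^\eps(y)-\W_\ast|\leq\delta$; rescaling $y$ absorbs the factor $\eps$.

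For the ``only if'' direction I would argue by contraposition, using the center-manifold/Lyapunov–Schmidt reduction underlying \cite{MaPe}: in the weak-shock regime the connecting orbit lies on a one-dimensional invariant manifold tangent to the relevant right eigenvector $\mathbf r_\pm(\W_\ast)$, on which the planar ODE reduces to a scalar equation whose two equilibria are ordered according to the sign of $\nabla\lambda_\pm\cdot\mathbf r_\pm\neq0$; a monotone heteroclinic between them exists exactly for the admissible orientation, i.e. exactly when $\bar\rho<\rho_\ast$ (sign $+$) or $\rho_\ast<\bar\rho$ (sign $-$), so a Lax/Liu-violating weak shock admits no profile. The main obstacle — the only place where something beyond bookkeeping is hidden — is checking that one is squarely inside the hypotheses and the small-amplitude range of \cite{MaPe}: this is precisely where strict stability (Proposition~\ref{prop:stab_bfp}), the entropy/symmetrizer structure of Section~\ref{sec:ent:fkm}, and genuine nonlinearity (Proposition~\ref{prop:redfields}) are used simultaneously, and where the explicit monotonicity of $c_\pm$ in $\rho$ converts the abstract admissibility condition into the stated inequalities on $\bar\rho$.
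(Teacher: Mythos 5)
Your proposal is correct and follows essentially the same route as the paper: the authors also obtain Theorem~\ref{th:th1} by verifying the hypotheses of \cite[Corollary~2]{MaPe} -- strict stability via the entropy symmetrizer $\mathbf{X}=\DD^2_{\W}\zeta$ (Proposition~\ref{prop:stab_bfp}), genuine nonlinearity (Proposition~\ref{prop:redfields}), and the equivalence of Liu's criterion \eqref{eq:liu} with the stated inequalities on $\bar\rho$ coming from the monotonicity of $c_\pm$ -- and then citing that result for the ``if and only if''. Your additional remarks (invertibility of $\mathbf D$ for $\theta>0$ turning \eqref{eq:profile} into a genuine planar ODE, and the center-manifold sketch of the ``only if'' direction) merely unpack what the cited corollary already delivers, so they do not constitute a different argument.
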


Using the appropriate unknowns (specifically, the entropy variables) is crucial to obtain the existence result stated in Theorem~\ref{th:th1}.
Different coordinates could support incorrect conclusions. 
Among others, a detailed discussion on stability properties of weak propagation fronts proved in Theorem~\ref{th:th1} can be found in \cite{ZumbHowa98}.

\subsection{A  few remarks on the  stability estimate}\label{sec:comm_par}

Let us go back to the discussion in subsection \ref{sec:sub_enest} to further illustrate some relevant implications in the case
of the viscous Burgers fluid-particle problem \eqref{eq:bfp}.
At first sight, even if tempting, requiring $\mathbf{D}$ in \eqref{eq:diffBfp}  to be  parabolic in the sense of Definition \ref{def:parab}
involves (unphysical) limitations on the temperature, as shown in the following claim.

\begin{prop}\label{lem:DsymPosDef}
Let $\mathbf{D}$ be defined in \eqref{eq:diffusionBfp} and set $\Lambda:=\sqrt{\rho\,r u^2}$.
The symmetric part  $\mathbf{D}_{\textrm{\rm sym}}$ of $\mathbf{D}$ is strictly positive definite if and only if
\begin{equation*}
	\theta\in\left\{\begin{aligned}
		&(\theta_1,+\infty) 	&\quad	&\textrm{if}\quad 0\leq \Lambda\leq 2,\\
		&(\theta_1,\theta_2)	&\quad	&\textrm{if}\quad \Lambda >2,\\
		\end{aligned}\right.
\end{equation*}
with $\theta_1=\theta_1(\U):=r^{-2}\Lambda/(\Lambda+2)$ and $\theta_2=\theta_2(\U):=r^{-2}\Lambda/(\Lambda-2)$.
\end{prop}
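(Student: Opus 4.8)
The plan is to compute the symmetric part $\mathbf{D}_{\textrm{sym}}=\tfrac12(\mathbf{D}+\mathbf{D}^\intercal)$ explicitly from \eqref{eq:diffusionBfp}, then apply the standard $2\times 2$ criterion: a symmetric matrix is strictly positive definite if and only if both its trace and its determinant are strictly positive (equivalently, the $(1,1)$-entry and the determinant are positive). First I would write, using $\mathbf{D}=\mathbf{D}_0+\theta\mathbf{D}_1$ with the explicit forms given,
\begin{equation*}
	\mathbf{D}_{\textrm{sym}}=\begin{pmatrix} \rho u^2/r^3+\theta/r^2 & -\rho u/(2r^3)-\theta\rho u/(2r) \\ -\rho u/(2r^3)-\theta\rho u/(2r) & \theta\rho/r \end{pmatrix},
\end{equation*}
so that the $(1,1)$-entry is manifestly strictly positive for $\rho>0$, $\theta>0$, and similarly the $(2,2)$-entry and hence the trace are strictly positive. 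Thus the whole condition reduces to the sign of $\det\mathbf{D}_{\textrm{sym}}$.

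Next I would compute the determinant. Writing $\Lambda^2=\rho r u^2$, a direct expansion gives
\begin{equation*}
	\det\mathbf{D}_{\textrm{sym}}=\Bigl(\frac{\rho u^2}{r^3}+\frac{\theta}{r^2}\Bigr)\frac{\theta\rho}{r}-\frac{\rho^2 u^2}{4r^4}\Bigl(\frac{1}{r^2}+\theta\Bigr)^2.
\end{equation*}
The key step is to collect this as a quadratic polynomial in $\theta$ and to recognize that the two solvability conditions $\theta_{1},\theta_{2}$ given in the statement are exactly its roots (with $\theta_1<\theta_2$ when both are positive). Concretely, after clearing the common positive factor one gets an expression of the form $a(\U)\,\theta^2+b(\U)\,\theta+c(\U)$ whose discriminant factors nicely; I expect the roots to simplify to $\theta_{1,2}=r^{-2}\Lambda/(\Lambda\pm 2)$ after substituting $\Lambda^2=\rho r u^2$ and using $r=1+\rho$. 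One then reads off: when the leading coefficient is positive the parabola is negative between the roots; careful bookkeeping of signs of $a$, $b$, $c$ shows that $\det\mathbf{D}_{\textrm{sym}}>0$ precisely on $(\theta_1,\theta_2)$ when $\theta_2>0$, and on $(\theta_1,+\infty)$ when the second root is absent or non-positive, i.e. when $\Lambda\le 2$ (so that $\Lambda-2\le 0$ and $\theta_2$ is not a positive threshold).

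The main obstacle is the algebra of identifying the roots and, more delicately, the case analysis on the sign of $\Lambda-2$: one must check that for $\Lambda\le 2$ the factor $\Lambda-2$ makes the "upper" root spurious (negative or infinite), so that positivity of the determinant holds on the whole half-line $(\theta_1,+\infty)$, whereas for $\Lambda>2$ both $\theta_1$ and $\theta_2$ are genuine positive thresholds with $\theta_1<\theta_2$. I would also verify that $\theta_1>0$ always (which is immediate since $\Lambda,\Lambda+2>0$) and that $\theta_1<\theta_2$ when $\Lambda>2$, so the stated interval is nonempty. A convenient way to organize the sign analysis, avoiding a messy discriminant, is to treat $\det\mathbf{D}_{\textrm{sym}}$ directly as a function of $\theta$ for fixed $\U$: it vanishes exactly at $\theta_1$ and $\theta_2$, so it suffices to determine its sign on one test value (e.g. large $\theta$, where the dominant term $\theta^2(\tfrac{\rho}{r^3}-\tfrac{\rho^2 u^2}{4r^4})$ governs the sign, i.e. the sign of $4r-\Lambda^2\cdot$ something) to pin down the correct intervals. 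This reduces the whole proof to one clean determinant computation plus a short sign discussion.
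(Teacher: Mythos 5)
Your proposal follows essentially the same route as the paper: positivity of the trace (equivalently of the diagonal entries) is immediate, and the whole question reduces to the sign of $\det\mathbf{D}_{\textrm{sym}}$ viewed as a quadratic $Q(\theta)$ whose roots are $\theta_1,\theta_2$ and whose leading coefficient $4-\Lambda^2$ drives the case split on $\Lambda\lessgtr 2$; the paper organizes the sign discussion by factoring $Q(\theta)=\{(2-\Lambda)\theta+\Lambda/r^{2}\}\{(2+\Lambda)\theta-\Lambda/r^{2}\}$, which sidesteps the discriminant bookkeeping you anticipate. One slip to correct before the roots come out as stated: the off-diagonal entry is $-\tfrac{\rho u}{2r}\bigl(\tfrac{1}{r^{2}}+\theta\bigr)$, so its square carries the prefactor $\rho^{2}u^{2}/(4r^{2})$, not $\rho^{2}u^{2}/(4r^{4})$; with that fix one finds $\det\mathbf{D}_{\textrm{sym}}=\tfrac14\rho r^{-3}\bigl[(4-\Lambda^{2})\theta^{2}+2r^{-2}\Lambda^{2}\theta-r^{-4}\Lambda^{2}\bigr]$, whose roots are exactly $\theta_1$ and $\theta_2$.
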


\begin{figure}[!htb]
\includegraphics[height=5cm]{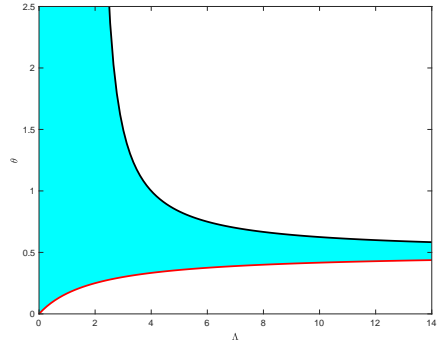}
\caption{\footnotesize  Admissible region in the $(\Lambda,\theta)-$plane where $\mathbf{D}_{\textrm{sym}}$
 is strictly positive definite for the choice $\rho=1$.}
\label{fig:Lemma34}
\end{figure}

This has to be compared to Proposition~\ref{prop:stab_bfp}, concluding that the notion of parabolicity 
provided in Definition \ref{def:parab} is not the appropriate notion to investigate the stability of viscous
perturbations of hyperbolic problems.
On the one hand, as explained in Section~\ref{sec:sub_enest} it is not enough to obtain stability 
estimates which are uniform with respect to $\epsilon$. On the other hand, 
it might involve irrelevant restrictions on the parameters of the problem.

\begin{proof}
It is readily seen that the trace of the matrix  $\mathbf{D}_{\textrm{sym}}$, that is
\begin{equation*}
 	\textrm{tr}\left( \mathbf{D}_{\textrm{sym}}\right)=\textrm{tr}\left( \mathbf{D}\right)
		=r^{-3}\left\{\rho u^2+\theta r(1+\rho\,r)\right\}\,,
\end{equation*}
is positive for any $\rho\geq 0$ and $\theta>0$.
The determinant of the symmetric part $\mathbf{D}_{\textrm{sym}}$ can be regarded
as a second-order polynomial with respect to $\theta$:
\begin{equation*}
	P(\theta)= \tfrac{1}{4}\rho r^{-3}Q(\theta)
	\qquad\textrm{where}\quad 
	Q(\theta):=(4-\Lambda^2)\theta^2+2r^{-2}\Lambda^2\theta-r^{-4}\Lambda^2\,.
\end{equation*}
Since the reduced discriminant of $Q$ is ${\Delta}/{4}={4\Lambda^2}/r^4$, we infer the factorization
\begin{equation*}
	Q(\theta)=\left\{(2-\Lambda)\theta+{\Lambda}/{r^{2}}\right\}
			\left\{(2+\Lambda)\theta-{\Lambda}/{r^{2}}\right\}.
\end{equation*}
In particular, the symmetric matrix $\mathbf{D}_{\textrm{sym}}$ is strictly definite positive if and only if $Q(\theta)>0$
providing the above restrictions on the parameter $\theta$.
\end{proof}

\section{Flowing regime for  the Euler fluid-particle system}
\label{sec:Euler} 

A more realistic model couples the evolution of the particles, with the Euler equation for the carrier fluid.
Namely, we consider 
\begin{equation}\label{eq:kinEu}
	\partial_t f_\eps+ v\partial_x f_\eps =\dfrac{1}{\eps} L_{u_\eps}(f_\eps)\,,
\end{equation}
coupled to 
\begin{equation}\label{eq:Eul}
	\left\{\begin{aligned}
	&\partial_t n_\eps+ \partial_x (n_\eps u _\eps) =0\,,\\
	&\partial_t (n_\eps u _\eps)+ \partial_x \left\{n_\eps u _\eps^ 2 + p(n_\eps)\right\}
		=-\dfrac{1}{\eps} \int vL_{u_\eps}(f_\eps)\ud v =\dfrac{1}{\eps} (J_\eps - \rho_\eps u_\eps)\,,
	\end{aligned}\right.
\end{equation}
still with the notation 
\begin{equation*}
	\rho_\eps=\int f_\eps \ud v
	\quad\textrm{and}\quad
	J_\eps=\int vf_\eps \ud v\,.
\end{equation*}
Here the unknown $n_\eps$ stands for the density of the carrier fluid, and $u_\eps$ for its velocity field.
The pressure function $p=p(n)$ obeys the standard principles of thermodynamics:
it is increasing and strictly convex, 
a typical example being the $\gamma$-law given in \eqref{eq:gammalaw}.

\subsection{Derivation and hyperbolicity}

Again, as $\eps$ goes to 0, we infer heuristically that
\begin{equation*}
	f_\eps(t,x)\simeq \rho_\eps M_{u_\eps(t,x)}(v)\,,
\end{equation*}
where $M_u$ is  the Maxwellian distribution introduced in \eqref{eq:maxwell}.
Hence, setting $r:=n+\rho$ and $w:=ru$, the limiting quantity $\W=(r,\rho,w)$ satisfies at leading order
the extended  nonlinear system \eqref{eq:ext}, which has the form \eqref{eq:hyp_cons} where the flux $F$ is given by
\begin{equation}\label{eq:eul_flux}
	F(\W):=\left(w,\rho w/r,w^2/r+p(n)+\theta \rho\right).
\end{equation}
We refer the reader to \cite{CG} for the introduction of this model; further numerical investigation can be found in \cite{CGL}.

Following again the standard approach, we verify that the extended system \eqref{eq:ext} is hyperbolic,
i.e. the Jacobian $\ud F=\ud F(\W)$, explicitly given by
\begin{equation*}
	\ud F=\begin{pmatrix} 0 & 0 & 1 \\ -\rho w/r^2 & w/r & \rho/r \\ -w^2/r^2 + p' & -p'+\theta & 2w/r \end{pmatrix}
		=\begin{pmatrix} 0 & 0 & 1 \\ -\rho u/r & u & \rho/r \\ -u^2+p' & -p'+\theta & 2u \end{pmatrix}
\end{equation*}
is such that
\begin{equation*}
	\det(\ud F - \lambda\mathbf{I})
	=-(\lambda-u)\left\{(\lambda-u)^2-(np'+\theta\rho)/r\right\}
\end{equation*}
so that the eigenvalues are real, being explicitly given by
\begin{equation}\label{eq:eigenvEuler}
	\lambda=\lambda_0=u\qquad\textrm{and}\qquad
	\lambda_\pm=u\pm\sqrt{(np'+\theta\rho)/r}\,.
\end{equation}

\begin{rmk}\label{rmk:galinv1}\rm
Set $(y,s)=(x-u_0 t,t)$, corresponding to $(\partial_x,\partial_t)=(\partial_y,\partial_s-u_0 \partial_y)$ and set $v:=u-u_0$.
The first two equations in \eqref{eq:ext} are invariant with respect to Galilean transformations. 
Indeed, there holds
\begin{equation*}
	\partial_t r +\partial_x(r u) = \partial_s r -u_0\partial_y r + \partial_y\bigl\{r(v+u_0)\bigr\}
		= \partial_s r +\partial_y(r v),
\end{equation*}
with an analogous computations for the unknown $\rho$.
Concerning the third equation, introducing the {\it total pressure} $P:= p + \theta \rho$, there holds
\begin{equation*}
	\begin{aligned}
	\partial_t (ru) 
	& +\partial_x(ru^2+P)
		= \partial_s \bigl\{r(v+u_0)\bigr\} -u_0\partial_y\bigl\{r(v+u_0)\bigr\} + \partial_y \bigl\{r(v+u_0)^2+P\bigr\}\\
	&\hskip1.1cm = \partial_s (rv)+u_0\partial_s r -u_0\partial_y(rv)-u_0^2 \partial_y r + \partial_y \bigl\{r(v^2+2u_0 v+u_0^2)+P\bigr\}\\
	&\hskip1.1cm = \partial_s (rv) + \partial_y (rv^2+P) -2u_0\partial_y(rv)-u_0^2 \partial_y r + 2u_0\partial_y(rv) + u_0^2 \partial_y r \\
	&\hskip1.1cm = \partial_s (rv) + \partial_y (rv^2+P),
	\end{aligned}
\end{equation*}
showing that the hyperbolic system \eqref{eq:ext} is invariant with respect to
Galilean transformations.
In addition, it can also be shown that the above system is invariant under space reversal,
the proof being very similar to the one for the reduced system \eqref{eq:red}.
\end{rmk}

In parallel with Proposition~\ref{prop:redfields}, we are now interested in a more precise classification
of the characteristic fields for the conservation law system \eqref{eq:ext}.

\begin{prop}\label{prop:Efp_fields}
Let assumption \eqref{eq:hyppressure} be satisfied.
Then, for any $\theta\geq 0$, system \eqref{eq:ext} is strictly hyperbolic with one linearly degenerate
field and two genuinely nonlinear fields whenever $n>0$ and $\rho, \theta\geq0$ or $n=0$ and $\rho, \theta>0$.
\end{prop}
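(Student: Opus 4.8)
The plan is to read everything off the eigenvalues $\lambda_0=u$ and $\lambda_\pm=u\pm\sqrt{(np'(n)+\theta\rho)/r}$ already displayed in \eqref{eq:eigenvEuler}, after passing to the non-conservative variables $\U=(n,\rho,u)$ through the diffeomorphism $\W=G(\U)=(n+\rho,\rho,(n+\rho)u)$, whose Jacobian has determinant $r=n+\rho>0$ on the whole region considered; Lemma~\ref{lem:lemm_gal} then ensures that the scalar products $\nabla_{\W}\lambda\cdot\mathbf{r}$ are preserved by this change of variables, so that genuine nonlinearity and linear degeneracy may be checked in the variables $\U$. In these variables \eqref{eq:ext} takes the quasilinear form $\partial_t\U+B(\U)\partial_x\U=0$ with
\begin{equation*}
	B(\U)=\begin{pmatrix} u & 0 & n \\ 0 & u & \rho \\ p'(n)/r & \theta/r & u \end{pmatrix},
	\qquad
	\det\bigl(B-\lambda\mathbf{I}\bigr)=(u-\lambda)\Bigl[(u-\lambda)^2-\tfrac{np'(n)+\theta\rho}{r}\Bigr].
\end{equation*}
Writing $c^2:=(np'(n)+\theta\rho)/r$, strict hyperbolicity amounts to $c^2>0$: when $n>0$ this holds for every $\rho,\theta\geq0$ since $p'(n)>0$ by \eqref{eq:hyppressure}, while when $n=0$ one has $c^2=\theta$ (using $p'(0)=0$ and $r=\rho$), which is positive exactly under the assumption $\rho,\theta>0$. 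In all these cases $\lambda_-=u-c<\lambda_0=u<u+c=\lambda_+$.

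For the $\lambda_0$-field, solving $(B-u\mathbf{I})\mathbf{s}_0=0$ gives $\mathbf{s}_0=(\theta,-p'(n),0)^\intercal$, which is nonzero because $p'(n)$ and $\theta$ cannot vanish simultaneously under the hypotheses; since $\mu_0(\U)=u$ does not depend on $(n,\rho)$ we get $\nabla_{\U}\mu_0\cdot\mathbf{s}_0=0$, so by Lemma~\ref{lem:lemm_gal} the field is linearly degenerate. (Equivalently, $u$ and the total pressure $p(n)+\theta\rho$ are Riemann invariants of this field, along which $\lambda_0=u$ is constant.)

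For the $\lambda_\pm$-fields, solving $(B-(u\pm c)\mathbf{I})\mathbf{s}_\pm=0$ gives $\mathbf{s}_\pm=(n,\rho,\pm c)^\intercal$, so that, with $\mu_\pm(\U)=u\pm c(n,\rho)$, one has $\nabla_{\U}\mu_\pm\cdot\mathbf{s}_\pm=\pm\bigl(n\,\partial_n c+\rho\,\partial_\rho c+c\bigr)$. Differentiating the identity $c^2 r=np'(n)+\theta\rho$ with respect to $n$ and to $\rho$ (using $\partial_n(np')=p'+np''$) and combining, all terms except $n^2p''(n)$ cancel, which gives $n\,\partial_n c+\rho\,\partial_\rho c=\dfrac{n^2p''(n)}{2cr}$, hence
\begin{equation*}
	\nabla_{\U}\mu_\pm\cdot\mathbf{s}_\pm=\pm\,\frac{n^2p''(n)+2np'(n)+2\theta\rho}{2cr}.
\end{equation*}
By \eqref{eq:hyppressure} ($p\in C^2$ with $p',p''>0$ on $(0,\infty)$) the numerator is strictly positive when $n>0$ for every $\rho,\theta\geq0$, and it equals $2\theta\rho>0$ when $n=0$ (using $\rho,\theta>0$ and $n^2p''(n)\to0$ as $n\to0^+$); thus $\nabla_{\U}\mu_\pm\cdot\mathbf{s}_\pm\neq0$, and Lemma~\ref{lem:lemm_gal} yields genuine nonlinearity of both fields. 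The only nonroutine point is the algebraic cancellation leaving $n^2p''(n)$ as the surviving term in $n\,\partial_n c+\rho\,\partial_\rho c$; once it is carried out, both the sign of the genuine-nonlinearity coefficient and the treatment of the degenerate endpoint $n=0$ are immediate.
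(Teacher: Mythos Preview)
Your proof is correct and lands on the same genuine-nonlinearity coefficient as the paper, namely $\pm\bigl(n^2p''(n)+2np'(n)+2\theta\rho\bigr)/(2cr)$ (the paper gets an extra factor $1/r$ from a different eigenvector normalization, which of course does not affect the sign). The two arguments differ only in how they organize the computation: the paper stays in the conservative variables $\W=(r,\rho,w)$ and exploits the Galilean invariance of \eqref{eq:ext} to reduce to $u=0$ before computing $\nabla_{\W}\lambda_\pm\cdot\mathbf{r}_\pm$ directly, whereas you pass to the physical variables $\U=(n,\rho,u)$ via Lemma~\ref{lem:lemm_gal} and work with the quasilinear matrix $B(\U)$, which keeps the computation uniform in $u$ and makes the cancellation leading to the lone $n^2p''$ term fall out of the single identity $c^2r=np'+\theta\rho$. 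Both routes are short; yours trades the symmetry reduction for the change-of-variables lemma and is marginally tidier. One small cosmetic point: in the case $n=0$ you do not actually need $p'(0)=0$ to get $c^2=\theta$, since the term $np'(n)$ already vanishes there.
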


\begin{proof}
To start with, let us compute $\nabla_{\W}\lambda$ for $\lambda\in\{\lambda_0,\lambda_\pm\}$.
Upon computations, we infer
\begin{equation*}
	\nabla_{\W}\lambda_0=\left(-\frac{w}{r^2},0,\frac{1}{r}\right)
	\quad\textrm{and}\quad  
	\nabla_{\W}\lambda_\pm=\left(-\frac{w}{r^2}\pm \frac{np'' r+\rho(p'-\theta)}{2dr^2},
		\mp\frac{p'+np''-\theta}{2dr},\frac{1}{r}\right)\,,
\end{equation*}
where $d:=\sqrt{(np'+\theta\rho)/r}$.
Relying on the Galilean invariance, we may reduce to the case $u=0$ (corresponding to $w=0$), hence
upon computations, we infer $\lambda_0=0$ and $\lambda_\pm=\pm d$ together with
\begin{equation*}
	\nabla_{\W}\lambda_0=\left(0,0,\frac{1}{r}\right)
	\quad\textrm{and}\quad
	\nabla_{\W}\lambda_\pm=\left(\pm \frac{np'' r+\rho(p'-\theta)}{2dr^2},
		\mp\frac{p'+np''-\theta}{2dr},\frac{1}{r}\right)\,.
\end{equation*}
Right eigenvectors relative to $\lambda_0$ are proportional to the vector $\mathbf{r}_0:=(p'-\theta,p',0)^\intercal$.
Since
\begin{equation*}
	\nabla_{\W}\lambda_0\cdot \mathbf{r}_0=0\cdot (p'-\theta)+0\cdot p'+\dfrac{1}{r}\cdot 0=0,
\end{equation*}
the field $\lambda_0$ is linearly degenerate.

Right eigenvectors relative to eigenvalues $\lambda_\pm$ are proportional to $\mathbf{r}_\pm:=(1,{\rho}/{r},\pm d)^\intercal$.
Therefore, there holds
\begin{equation*}
	\begin{aligned}
	\nabla_{\W}\lambda_\pm\cdot \mathbf{r}_\pm
		&=\pm \frac{np'' r+\rho(p'-\theta)}{2dr^2}\cdot 1 \mp\frac{p'+np''-\theta}{2dr}\cdot \frac{\rho}{r}\pm \frac{1}{r}\cdot d\\
		&=\pm\left\{\frac{np''\,r+\rho(p'-\theta)-(np''+p'-\theta)\rho}{2dr^2}+\frac{d}{r}\right\} 
			=\pm \frac{n^2 p''+2np'+2\theta\rho}{2dr^2}\neq 0\,,
	\end{aligned}
\end{equation*}
for any $n>0$ and $\rho,\theta\geq 0$ or $n=0$ and $\rho, \theta>0$.
In particular, the characteristic fields $\lambda_\pm$ are genuinely nonlinear in such a regime.
\end{proof}

\subsection{Shock solutions}
\label{Sh_sol}

To investigate discontinuous solutions, we again take advantage of relations \eqref{eq:jumpuseful}.
Having fixed a state $(\rho,n,u)\neq (\rho_\ast,n_\ast,u_\ast)$, 
the Rankine-Hugoniot conditions associated to system \eqref{eq:ext} read 
\begin{equation}\label{eq:rh_3d}
	c\llbracket \rho \rrbracket = \llbracket \rho u \rrbracket,\quad 
	c\llbracket n \rrbracket = \llbracket n u \rrbracket,\quad
	c \llbracket r u \rrbracket = \llbracket r u^2+ \theta \rho + p(n) \rrbracket.
\end{equation}

\begin{lemma}\label{lem:zerojump}
The following implications  hold true.
\begin{itemize}
\item[\bf i.] If one among the quantities $\llbracket \rho \rrbracket$, $\llbracket n \rrbracket$, $\llbracket r \rrbracket$ and $c-u_\ast$
is zero then $\llbracket u\rrbracket =0$.
\item[\bf ii.] If $\llbracket u\rrbracket =0$ and $\bigl(\llbracket \rho\rrbracket,\llbracket n\rrbracket,\llbracket r\rrbracket\bigr)\neq (0,0,0)$,
then $c=c_0:=u_\ast$.
\end{itemize}
\end{lemma}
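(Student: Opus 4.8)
The plan is to reduce both parts to a short list of linear relations extracted from the first two Rankine--Hugoniot conditions in \eqref{eq:rh_3d} by means of the product rule \eqref{eq:jumpuseful}; the third condition turns out to play no role in the statement. Expanding $\llbracket \rho u\rrbracket = \llbracket\rho\rrbracket\,u_\ast + \rho\,\llbracket u\rrbracket$ and $\llbracket nu\rrbracket = \llbracket n\rrbracket\,u_\ast + n\,\llbracket u\rrbracket$, the first two conditions become
\begin{equation*}
	(c-u_\ast)\,\llbracket\rho\rrbracket = \rho\,\llbracket u\rrbracket,
	\qquad
	(c-u_\ast)\,\llbracket n\rrbracket = n\,\llbracket u\rrbracket,
\end{equation*}
and, summing these and using $r=\rho+n$,
\begin{equation*}
	(c-u_\ast)\,\llbracket r\rrbracket = r\,\llbracket u\rrbracket.
\end{equation*}
I would record these three identities once and for all; everything else is case analysis.

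For part i, I would treat the four possibilities separately, using throughout that in the flowing two-phase regime the phase densities are positive, so that $\rho,n,r>0$. If $\llbracket\rho\rrbracket=0$, the first identity reads $0=\rho\,\llbracket u\rrbracket$, whence $\llbracket u\rrbracket=0$; the cases $\llbracket n\rrbracket=0$ and $\llbracket r\rrbracket=0$ are handled identically, using the second identity and the summed one respectively. If instead $c=u_\ast$, the left-hand side of each identity vanishes, and, say, $\rho\,\llbracket u\rrbracket=0$ again yields $\llbracket u\rrbracket=0$.

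For part ii, suppose $\llbracket u\rrbracket=0$. Then the right-hand sides of the three identities all vanish, so that $(c-u_\ast)\llbracket\rho\rrbracket=(c-u_\ast)\llbracket n\rrbracket=(c-u_\ast)\llbracket r\rrbracket=0$; since by hypothesis at least one of $\llbracket\rho\rrbracket,\llbracket n\rrbracket,\llbracket r\rrbracket$ is nonzero, this forces $c-u_\ast=0$, i.e. $c=c_0=u_\ast$. Note that no positivity of the densities is needed for this part.

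The argument is essentially bookkeeping, and the only genuine subtlety is the positivity of the densities invoked in part i: if the disperse phase were absent on both sides ($\rho=\rho_\ast=0$) the model would collapse to pure gas dynamics, which does admit shocks with $\llbracket\rho\rrbracket=0$ but $\llbracket u\rrbracket\neq0$, so part i must be read in the regime where both phases are genuinely present. I would therefore make this standing hypothesis explicit at the outset of the proof.
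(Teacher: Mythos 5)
Your proposal is correct and follows essentially the same route as the paper: both extract the identities $(c-u_\ast)\llbracket\rho\rrbracket=\rho\llbracket u\rrbracket$, $(c-u_\ast)\llbracket n\rrbracket=n\llbracket u\rrbracket$ and their sum for $r$ from the first two Rankine--Hugoniot conditions via \eqref{eq:jumpuseful}, and then conclude by the same case analysis. Your explicit remark that part \textbf{i.} requires strict positivity of the densities is a worthwhile clarification of a hypothesis the paper leaves implicit, but it does not change the argument.
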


\begin{proof}
{\bf i.}  There holds $c\llbracket \rho \rrbracket=\llbracket \rho u \rrbracket = \llbracket \rho \rrbracket  u_\ast + \rho \llbracket u \rrbracket$, 
hence
\begin{equation*}
	\llbracket \rho \rrbracket (c-u_\ast) = \rho \llbracket u \rrbracket,
\end{equation*}
and the conclusion follows.
A similar proof holds for $n$ and $r=\rho+n$, observing that, summing up the equations for $\rho$ and $n$, 
there holds $\partial_t r+\partial_x (ru)=0$ and $c\, \llbracket r\rrbracket = \llbracket ru\rrbracket$.

{\bf ii.} Since $c\llbracket \rho \rrbracket = \llbracket \rho u \rrbracket = \llbracket \rho \rrbracket u_\ast$,
the conclusion is trivial if $\llbracket \rho\rrbracket \neq 0$.
A similar argument can be invoked if $\llbracket n\rrbracket \neq 0$ and $\llbracket r\rrbracket \neq 0$ using
the analogous relation for $n$ and $r$.
\end{proof}

If $\llbracket \rho \rrbracket\neq 0$ and $\llbracket n \rrbracket\neq 0$, then, equations \eqref{eq:rh_3d} are equivalent to
\begin{equation}\label{eq:rh_3d_eq}
	c=\frac{\llbracket \rho u \rrbracket}{\llbracket \rho \rrbracket}
		=\frac{\llbracket n u \rrbracket}{\llbracket n \rrbracket}
		=\frac{\llbracket ru^2+\theta\rho+p(n) \rrbracket}{\llbracket ru \rrbracket}.
\end{equation}  
As proved in the following result, such shock solutions enjoy {\it Liu's entropy condition}
under appropriate standard assumptions on the pressure $p$.

\begin{prop}\label{prop:liucond}
If $\llbracket u \rrbracket\neq 0$ then the speed $c$, given in the equalities \eqref{eq:rh_3d_eq}, satisfies Liu's entropy condition.
\end{prop}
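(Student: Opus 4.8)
The plan is to follow the same route as for the Burgers model in \eqref{eq:defcs}--\eqref{eq:liu}: reduce the three Rankine--Hugoniot relations \eqref{eq:rh_3d_eq} to a single scalar identity expressing $c$ as an explicit function of the carrier density $n$ along each branch of the Hugoniot locus, and then read off Liu's criterion from the monotonicity of this function. To begin, the hypothesis $\llbracket u\rrbracket\neq 0$ together with Lemma~\ref{lem:zerojump}(i) forces $\llbracket\rho\rrbracket\neq 0$, $\llbracket n\rrbracket\neq 0$ and $c\neq u_\ast$, so that \eqref{eq:rh_3d_eq} is meaningful. Applying the first identity in \eqref{eq:jumpuseful} to the first two equalities of \eqref{eq:rh_3d_eq} gives $(c-u_\ast)\llbracket\rho\rrbracket=\rho\llbracket u\rrbracket$ and $(c-u_\ast)\llbracket n\rrbracket=n\llbracket u\rrbracket$; dividing the two yields $\llbracket\rho\rrbracket/\llbracket n\rrbracket=\rho/n$, hence $\rho/n=\rho_\ast/n_\ast$. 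Thus the mass fraction is a shock invariant, and, setting $\kappa:=\rho_\ast/n_\ast$, we have $r=(1+\kappa)n$ with the whole state parametrized by $n$ and $u$; in particular the total pressure $P:=p(n)+\theta\rho$ becomes a function of $n$ alone which, by \eqref{eq:hyppressure}, is strictly increasing and strictly convex.

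Next I would substitute these relations into the third equality of \eqref{eq:rh_3d_eq}. Using both identities in \eqref{eq:jumpuseful} to rewrite $\llbracket ru\rrbracket$ and $\llbracket ru^2\rrbracket$ in terms of $\llbracket n\rrbracket$, $\llbracket u\rrbracket$ and $c$, and then eliminating $c-u_\ast$ and $c-u$ via the two relations above, the third Rankine--Hugoniot condition collapses to
\begin{equation*}
	\llbracket u\rrbracket^2=\frac{\llbracket n\rrbracket\,\bigl(P(n)-P(n_\ast)\bigr)}{(1+\kappa)\,n\,n_\ast},
	\qquad
	(c-u_\ast)^2=\frac{n}{(1+\kappa)\,n_\ast}\cdot\frac{P(n)-P(n_\ast)}{n-n_\ast}=:g(n).
\end{equation*}
Since $P$ is increasing, $P(n)-P(n_\ast)$ and $n-n_\ast$ have the same sign, so $g>0$; a short computation (or passing to the limit $n\to n_\ast$) gives $g(n_\ast)=(n_\ast p'(n_\ast)+\theta\rho_\ast)/r_\ast$, which identifies $c=u_\ast\pm\sqrt{g(n)}$ as the two Hugoniot branches carrying a jump in $u$, the one with sign $(+)$, resp. $(-)$, emanating from the eigenvalue $\lambda_+(\W_\ast)$, resp. $\lambda_-(\W_\ast)$, in \eqref{eq:eigenvEuler}.

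The heart of the proof is then the monotonicity of $g$ on $(0,\infty)$. Writing $g(n)=\tfrac{1}{1+\kappa}\cdot\tfrac{n}{n_\ast}\cdot\phi(n)$ with $\phi(n):=\bigl(P(n)-P(n_\ast)\bigr)/(n-n_\ast)$ the secant slope of $P$ based at $n_\ast$ (extended by $\phi(n_\ast)=P'(n_\ast)$), strict convexity of $P$ makes $\phi$ strictly increasing, $\phi>0$ because $P$ is increasing, and $n\mapsto n/n_\ast$ is positive and strictly increasing; being a product of positive strictly increasing functions, $g$ is strictly increasing. Hence $c(n)=u_\ast+\sqrt{g(n)}$ is strictly increasing along the $\lambda_+$-branch and $c(n)=u_\ast-\sqrt{g(n)}$ strictly decreasing along the $\lambda_-$-branch, so $c$ is strictly monotone along each branch. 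Parametrizing the branches in the direction of the right eigenvectors $\mathbf{r}_\pm$ of Proposition~\ref{prop:Efp_fields} as in Section~\ref{sec:Gal}, the inequality $c(s)\leq c(\sigma)$ of \eqref{eq:gen_liu} for $0\leq\sigma\leq s$ holds precisely along the half-branch on which $c$ is non-increasing, i.e. for $\bar n<n_\ast$ in the case of the $\lambda_+$-shock and for $n_\ast<\bar n$ in the case of the $\lambda_-$-shock (here $\bar n$ denotes the carrier density of the state $\W_\times$), in perfect analogy with \eqref{eq:liu}. Since both fields carrying a jump in $u$ are genuinely nonlinear by Proposition~\ref{prop:Efp_fields}, this is equivalent to Lax's criterion for weak shocks, cf.~\cite[Theorem~8.4.2]{Daf}.

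The algebraic reduction in the first two steps is routine bookkeeping with \eqref{eq:jumpuseful}, once one observes that $\rho/n$ is a shock invariant; the genuine content — and the only place where the convexity assumption \eqref{eq:hyppressure} is actually used — is the strict monotonicity of $g$, the main care being to line up the direction of that monotonicity with the ordering in Liu's criterion and with the labelling of the two Hugoniot branches.
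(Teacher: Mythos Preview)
Your proof is correct and follows essentially the same route as the paper's: both extract the shock invariant $\rho/n=\rho_\ast/n_\ast$ from the first two Rankine--Hugoniot relations, parametrize the Hugoniot locus by $n$, derive $c_\pm(n)=u_\ast\pm\sqrt{g(n)}$, and deduce Liu's criterion from the strict monotonicity of $g$ via the convexity assumption \eqref{eq:hyppressure}. Your packaging of $p(n)+\theta\kappa n$ into a single convex total pressure $P$ and the ``product of positive increasing functions'' argument is a mildly cleaner variant of the paper's direct differentiation of $c_+$, but the content is the same.
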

 
\begin{proof}
From the second equality in \eqref{eq:rh_3d_eq}, we infer $\llbracket n u \rrbracket \llbracket \rho \rrbracket = \llbracket n \rrbracket \llbracket \rho u \rrbracket$,
which, after a straightforward computation, gives $nu\rho_\ast+n_\ast u_\ast\rho = n\rho_\ast u_\ast + u_\ast \rho u$.
In turn, the latter reduces to $(n\rho_\ast- n_\ast \rho ) \llbracket u \rrbracket = 0$ so that $n\rho_\ast = n_\ast \rho$.
Therefore, we obtain
\begin{equation}\label{eq:rho_n}
	\rho={n\rho_\ast}/{n_\ast}
	\qquad\textrm{and}\qquad
	\llbracket\rho\rrbracket ={\llbracket n\rrbracket \rho_\ast}/{n_\ast}.
\end{equation}
Recalling the identity $r=n+\rho$, from \eqref{eq:rh_3d_eq} it also follows
\begin{equation*}
	\llbracket r u^2+\theta\rho+p \rrbracket\llbracket \rho \rrbracket
		= \llbracket \rho u \rrbracket \llbracket ru \rrbracket
\end{equation*}
with a similar relation holding for $n$ in place of $\rho$, so that, summing up,
\begin{equation}\label{eq:post_rh_3d}
	\llbracket r u^2+\theta\rho+p \rrbracket \llbracket r \rrbracket = \llbracket r u \rrbracket^2.
\end{equation}
The first term on the lefthand side of \eqref{eq:post_rh_3d} can be rewritten as
\begin{equation*}
	\begin{aligned}
	\llbracket ru^2+\theta\rho+p \rrbracket
	& = r \llbracket  u \rrbracket^2  + 2 r u_\ast \llbracket  u \rrbracket +  \llbracket r \rrbracket u_\ast^2  + \theta\llbracket \rho\rrbracket +\llbracket p \rrbracket.
	\end{aligned}
\end{equation*}
Similarly, there holds $\llbracket r u \rrbracket^2 = \bigl(r\llbracket u\rrbracket + \llbracket r\rrbracket u_\ast \bigr)^2$.
Hence, plugging into \eqref{eq:post_rh_3d}, we infer
\begin{equation*}
	\begin{aligned}
	 r \llbracket r \rrbracket \llbracket  u \rrbracket^2  + 2r\llbracket r \rrbracket u_\ast   \llbracket  u \rrbracket+ \llbracket r \rrbracket^2 u_\ast^2 
	 & + \llbracket r \rrbracket\left\{\theta\llbracket \rho\rrbracket +\llbracket p \rrbracket\right\}\\
	&= r^2\llbracket u\rrbracket^2 +2r\llbracket r\rrbracket  u_\ast \llbracket u\rrbracket +\llbracket r\rrbracket^2 u_\ast^2, 
	\end{aligned}
\end{equation*}
that is,
\begin{equation*}
	\llbracket  u \rrbracket^2 = \frac{\llbracket r \rrbracket}{r_\ast r}\,\bigl(\theta\llbracket \rho\rrbracket +\llbracket p \rrbracket\bigr).
\end{equation*}
Taking advantage of \eqref{eq:rho_n}, we infer
\begin{equation}
	\llbracket  u \rrbracket^2 = \frac{\llbracket n \rrbracket^2}
		{r_\ast n}\left(\frac{\theta\rho_\ast}{n_\ast} +\frac{\llbracket p \rrbracket}{\llbracket n\rrbracket}\right).
\end{equation}
The  right-hand side is non-negative provided $p$ is a non-decreasing function, which makes this relation 
 consistent. 
In particular, there holds
\begin{equation*}
	\left|\frac{\llbracket  u \rrbracket}{\llbracket n \rrbracket}\right|=\left\{ \frac{1}{r_\ast n}
		\left(\frac{\theta\rho_\ast}{n_\ast} +\frac{\llbracket p \rrbracket}{\llbracket n\rrbracket}\right)\right\}^{1/2}
\end{equation*}
and, as a consequence,
\begin{equation*}
	c=u_\ast+n\frac{\llbracket  u \rrbracket}{\llbracket  n \rrbracket}
		=u_\ast\pm\left\{ \frac{n}{r_\ast}\left(\frac{\theta\rho_\ast}{n_\ast} +\frac{\llbracket p \rrbracket}{\llbracket n\rrbracket}\right)\right\}^{1/2}
		=:c_\pm(n).
\end{equation*}
Differentiating $c_+$ with respect to $n$, we deduce
\begin{equation*}
	\begin{aligned}
	\partial_n c_+&=\frac12\left\{r_\ast n\left(\frac{\theta\rho_\ast}{n_\ast} +\frac{\llbracket p \rrbracket}{\llbracket n\rrbracket}\right)\right\}^{-1/2}
		\left\{\frac{\theta\rho_\ast}{n_\ast} +\frac{\llbracket p \rrbracket}{\llbracket n\rrbracket}+n\,\frac{\ud}{\ud n}\frac{\llbracket p \rrbracket}{\llbracket n\rrbracket}\right\}.
	\end{aligned}
\end{equation*}
 Since  $p$ is strictly convex, there holds
\begin{equation*}
	\frac{\ud}{\ud n}\frac{\llbracket p \rrbracket}{\llbracket n\rrbracket}
	=\frac{\ud}{\ud n}\left\{\frac{p(n)-p(n_\ast)}{n-n_\ast}\right\}
	=\frac{p(n_\ast)-p(n)-p'(n)(n_\ast-n)}{(n_\ast-n)^2}>0.
\end{equation*}
In particular,  Liu's condition is satisfied for $c_+$ since $p''>0$.

A similar computation can be used to prove the same property for $c_-$.
\end{proof}

\subsection{Entropy for the inviscid Euler fluid-particle system}
\label{entropy}

Similarly to the \eqref{eq:bfp} case, the kinetic-fluid formulation suggests the functional
\begin{equation*}
	\zeta(\W)=\frac{w^2}{2r}+\Pi(n)+\theta\rho\ln \rho
	\quad\textrm{with}\quad
	\Pi(n):=\int_0^n \int_0^s \frac{1}{\varsigma}\frac{\ud p}{\ud\varsigma}(\varsigma)\,\ud \varsigma \ud s
\end{equation*}
as an entropy  for system \eqref{eq:ext}, see \cite{CG}.
For later use, we stress the identity $\Pi''=p'/n$.

In the special case of isentropic flows with pressure $p$ given by the standard $\gamma$-law, 
i.e. $p(n)=Cn^\gamma$ with $\gamma>1$, there holds 
\begin{equation*} 
	\Pi(n)=C\gamma \int_0^n \int_0^s \varsigma^{\gamma-2}\ud \varsigma \ud s
		=\frac{C\gamma}{\gamma-1}\int_0^n s^{\gamma-1}\ud \varsigma \ud s
		=\frac{Cn^\gamma}{\gamma-1}.
\end{equation*}
The gradient $\nabla_{\W}\zeta$ of the entropy $\zeta$ is explicitly given by
\begin{equation*}
	\begin{aligned}
	\nabla_{\W}\zeta(\W)^\intercal
		&=\left(-{w^2}/{2r^2}+\Pi',-\Pi'+\theta(1+\ln \rho),{w}/{r}\right)\\
		&=\left(-{u^2}/{2}+\Pi',-\Pi'+\theta(1+\ln \rho),u\right)\,,
	\end{aligned}
\end{equation*}
while the hessian $\DD^2_{\W}\zeta$ is 
\begin{equation*} 
	\DD^2_{\W}\zeta(\W) =\begin{pmatrix}
	{w^2}/{r^3}+\Pi''		& -\Pi''			& -{w}/{r^2}	\\
	-\Pi''				& \Pi''+\theta/\rho 	&  0			\\
	-{w}/{r^2}			& 0				& {1}/{r}		\end{pmatrix}
	=\begin{pmatrix}
	u^2/r+p'/n			& -p'/n			& -u/r	\\
	-p'/n				& p'/n+\theta/\rho 	&  0		\\
	-u/r				& 0				& {1}/{r}	\end{pmatrix}.
\end{equation*}
As before, tedious computations confirm that $\mathbf{X}:=\DD^2_{\W}\zeta$ symmetrizes the Jacobian $\ud F$
of the flux $\ud F$ of the hyperbolic system of conservation laws \eqref{eq:ext}.

\subsection{Viscous corrections leading to \eqref{eq:efp}} 

Again, we derive the second-order corrections associated to \eqref{eq:ext} by using the Chapman-Enskog expansion.
Namely, the function
\begin{equation*}
	g_\eps:=\dfrac{1}{\eps}\left(f_\eps - \rho_\eps M_{u_\eps}\right)
\end{equation*}
satisfies
\begin{equation}\label{eq:eqL1}
	\begin{aligned}
 	L_{u_\eps}(g_\eps)
	&= \eps\{\partial_t+v\partial_x\}g_\eps+M_{u_\eps}\left\{\partial_t \rho_\eps+ \partial_x (\rho_\eps u_\eps ) \right\}
		+\left(\frac{|v-u_\eps|^2}{\theta}-1\right)\rho_\eps  M_{u_\eps}\partial_x  u_\eps\\
	&\hskip5cm+ \frac{v-u_\eps}{\theta} \rho_\eps M_{u_\eps}\bigl\{\rho_\eps(\partial_t u_\eps + u_\eps\partial_x u_\eps) + \theta\partial_x \rho_\eps\bigr\}.
	\end{aligned}
\end{equation}
 Integrating the 
 kinetic equation yields
\begin{equation}\label{eq:massconsEu}
	\partial_t \rho_\eps+ \partial_x (\rho_\eps u _\eps) + \eps\partial_x \int vg_\eps \ud v=0.
\end{equation}
 Hence the first two terms in the right-hand side of \eqref{eq:eqL1}  
contributes only to the $\mathscr O(\eps)$ correction.
Next, by using  system \eqref{eq:Eul}, we get
\begin{equation*}
	\begin{aligned}
	\rho_\eps(\partial_t u_\eps + u_\eps\partial_x u_\eps)
		&=\frac{\rho_\eps}{n_\eps}\left\{\partial_t (n_\eps u _\eps)+ \partial_x (n_\eps u _\eps^ 2)\right\}
		=\frac{\rho_\eps}{n_\eps}\left\{-\partial_ xp + \int (v-u_\eps)g_\eps\ud v\right\}.
	\end{aligned}
\end{equation*}
Therefore, we arrive at 
\begin{equation*}
	\begin{aligned}
 	L_{u_\eps}(g_\eps) &= \left(\frac{|v-u_\eps|^2}{\theta}-1\right)\rho_\eps  M_{u_\eps}\partial_x  u_\eps \\
	&\quad +\frac{v-u_\eps}{\theta} \rho_\eps M_{u_\eps}\left(- \frac{\rho_\eps}{n_\eps}\partial_ xp
		+ \frac{\rho_\eps}{n_\eps} \int vg_\eps\ud v + \theta\partial_x \rho_\eps\right)
	+\mathscr O (\eps).
	\end{aligned}
\end{equation*}
Again, let us set $r_\eps:=\rho_\eps+n_\eps$.
Next, we multiply by $v$ and integrate in order to obtain a simple relation for $\int vg_\eps\ud v$, deducing
\begin{equation*}
	\int vg_\eps\ud v= \frac{n_\eps}{r_\eps} \left(\frac{\rho_\eps}{n_\eps}\partial_ xp
		-\theta\partial_x \rho_\eps\right)+\mathscr O (\eps)
\end{equation*}
and, consequently,
\begin{equation}\label{eq:eqg}
	g_\eps=-\frac1{2\theta}\left(|v-u_\eps|^2-\theta\right)\rho_\eps  M_{u_\eps}\partial_x  u_\eps
		-\frac{v-u_\eps}{\theta\,r_\eps} \rho_\eps M_{u_\eps}\left\{\theta n_\eps\partial_x \rho_\eps
		-\rho_\eps\partial_ x p\right\}+\mathscr O (\eps).
\end{equation}
 As a matter of fact, we have 
 \begin{equation}\label{eq:intv2gEul}
 \int v^2 g_\eps\ud v
 = 
 -\theta \rho_\eps\partial_x u_\eps + 2u_\eps \int vg_\eps\ud v +\mathscr O (\eps)
 .\end{equation}
 Finally, we express the conservation of the total momentum
\begin{equation}\label{eq:total_mt}
	\partial_ t w_\eps +\partial_x \left\{r_\eps u_\eps^2 +p+\theta\rho_\eps + \eps\int v^2g_\eps \ud v\right\}=0\,,
\end{equation}
where 
\begin{equation*}
	w_\eps := r_\eps u_\eps + \eps \int vg_\eps\ud v\,.
\end{equation*}
 We are now going to write  the hydrodynamic 
 system, which arises by  getting rid of the terms of order higher than $\mathscr O(\eps)$.
Thus, in the previous expressions we make use of the following approximations:
\begin{equation*}
	\begin{aligned}
 	u_\eps 				&\quad\rightsquigarrow\quad \frac{w_\eps}{r_\eps}-\frac{\eps n_\eps}{r_\eps^2}
		\left(\frac{\rho_\eps}{n_\eps}\,\partial_ xp- \theta\partial_x \rho_\eps\right)\,,\\
 	\partial_x u_\eps		&\quad\rightsquigarrow\quad \partial_x\left(\frac{w_\eps}{r_\eps}\right)
		= \frac{1}{r_\eps}\partial_ x w_\eps - \frac{w_\eps}{r_\eps^2} \partial_x r_\eps\,,
	\end{aligned}
\end{equation*}
and
\begin{equation*}
	\begin{aligned}	
   	u^2_\eps				&\quad\rightsquigarrow\quad \frac{w^2_\eps }{r_\eps^2} -\frac{2\eps n_\eps w_\eps}{r_\eps^2}
		\left(\frac{\rho_\eps}{n_\eps}\partial_ xp- \theta\partial_x \rho_\eps\right)\,,\\
  	\int v^2 g_\eps\ud v		&\quad\rightsquigarrow\quad -\theta\rho_\eps\partial_x\left(\frac{w_\eps}{r_\eps}\right)
  		- \frac{2 n_\eps w_\eps}{r_\eps^2}\left(\frac{\rho_\eps}{n_\eps}\partial_ xp- \theta\partial_x \rho_\eps\right)\,.
	\end{aligned}
\end{equation*}
Based on these approximations, we obtain a second-order system for $\W_\eps=(r_\eps,\rho_\eps,w_\eps)$
in the form \eqref{eq:hyppar_cons}, that is
\begin{equation}\label{eq:Eusysdiff}
	\partial_t \W_\eps +\partial_x F(\W_\eps)
	=\eps\partial_x\bigl\{\mathbf D(\W_\eps)\partial_x  \W_\eps\bigr\},
\end{equation}
where the flux $F$ is given in \eqref{eq:eul_flux} and the diffusion matrix $\mathbf{D}$ is given by \eqref{eq:diff_Efp},
which can also be decomposed as
\begin{equation}\label{eq:diff_Efp_munu}
	\mathbf{D}(\W)=\mathbf{D}_0(\W)+\theta\, \mathbf{D}_1(\W)\,,
\end{equation}
where, setting $\nu:=n/r \in(0,1)$, there holds
\begin{equation}\label{eq:defD0D1}
	\mathbf{D}_0:=\nu(1-\nu)p'\begin{pmatrix} 0 & 0 & 0 \\ -1  & 1 & 0 \\ 0 & 0 & 0 \end{pmatrix}
	\quad\textrm{and}\quad
	\mathbf{D}_1:= \begin{pmatrix} 0 & 0 & 0 \\ 0  & \nu^2 & 0 \\ -(1-\nu)u & 0 & 1-\nu \end{pmatrix}\,.
\end{equation}
The eigenvalues $\{\beta_0,\beta_1,\beta_2\}$ of the (triangular) diffusion matrix $\mathbf{D}$
are the element of its principal diagonal, viz.
\begin{equation*}
	\beta_0:=0\,,\quad \beta_1:=\nu(1-\nu)p'+\theta\nu^2\
	\quad\textrm{and}\quad \beta_2:=\theta(1-\nu).
\end{equation*} 
In particular, they are non-negative and,  differently from system \eqref{eq:bfp}, do not depend explicitly on the velocity $u$.

We can check the invariance with respect to the Galilean change of coordinates of system  \eqref{eq:Eusysdiff}.
Reformulating with respect to the  variable $\U=(r,\rho,u)$, we end up with
\begin{equation}\label{eq:Eusysdiff_NC}
	\partial_t G(\U) +\partial_x H(\U)
	=\eps\partial_x\left\{\mathbf E(\U)\partial_x  \U\right\}
\end{equation}
with $G(\U)=(r,\rho,ru)$, $H(\U)=(ru,\rho u,ru^2+p+\theta\rho)$ and
\begin{equation*}
	\mathbf{E}( \U):=
		\nu			\begin{pmatrix}  0	& 0	& 0	\\	-1	& 1		& 0	\\	0	& 0	& 0	\end{pmatrix}
		+\theta(1-\nu)	\begin{pmatrix}	 0	& 0	& 0	\\	0	& 1+\nu	& 0	\\	0	& 0	& r	\end{pmatrix}\,.
\end{equation*}
Introducing the variables $(y,s)$ and $u$ as in Remark~\ref{rmk:galinv1}, where we proved that
the left-hand side is invariant with respect to Galilean transformations,  we can also show that the
whole system \eqref{eq:Eusysdiff_NC} preserve the same property, as a consequence of the
independence of  $\mathbf{E}$ with respect to the velocity variable $u$. 

Since one of the eigenvalue of $\mathbf{D}$ is zero, the induced dissipation is partial and some additional stability is required.
In the present setting, the Kawashima--Shizuta condition --stating that there is no right eigenvector of $\ud F$
in the kernel of $\mathbf{D}$-- holds (see \cite{Kaw, KS}). 
Indeed, focusing without loss of generality on the case $u=0$,  the eigenvectors are proportional to
${\mathbf r=\left(1,\rho/r,\lambda\right)^\intercal}$ where $\lambda$ is a non-zero eigenvalue of $\ud F$
or to ${\mathbf r=\left(1,1-\theta/p',0\right)^\intercal}$ when $\lambda=0$.
Computing $\mathbf{D}\mathbf{r}$ for $\lambda\neq 0$ gives $(\mathbf{D}\mathbf{r})_{3}=\theta\lambda(1-\nu)\neq 0$
for $\theta>0$ and $\nu<1$.
Similarly, for $\lambda=0$, there holds $(\mathbf{D}\mathbf{r})_{2}=-\theta^2\nu^2/p'\neq 0$ for $\theta>0$ and $\nu>0$.
Summarizing, \eqref{eq:efp} satisfies the Kawashima--Shizuta stability condition for strictly positive temperature $\theta$
and $\nu$ in the open interval $(0,1)$ corresponding to $\rho$ and $n$ strictly positive.

For later use, let us also explore in more details  the temperature-less regime ${\theta=}0$.
In the case $\lambda\neq 0$, the third component  $(\mathbf{D}\mathbf{r})_{3}$ is null.
Nevertheless, the second component $(\mathbf{D}\mathbf{r})_{2}$ is equal to $-\nu^2(1-\nu)p'$
which is strictly negative if $\nu\in(0,1)$.
Hence the Kawashima-Shizuta condition holds for $\lambda\neq 0$.
Differently, for $\lambda=0$, there holds ${\mathbf{D}\mathbf{r}_0=(0,-\nu(1-\nu)p'+\nu(1-\nu)p',0)^\intercal=\mathbf{0}}$
and the condition is not satisfied.

Going further, we aim to show that the matrix $\ud^2_{\W}\zeta\,\mathbf{D}$ is symmetric. 
With this target, we rewrite the hessian $\ud^2_{\W}\zeta$ of the entropy  $\zeta$ (again with $u=0$,
thanks to the Galilean invariance) in terms of the scalar quantity $\nu=n/r$,
obtaining $\ud^2_{\W}\zeta=\mathbf{X}_0+\theta\mathbf{X}_1$ where
\begin{equation*}
	\mathbf{X}_0:=\frac{1}{n}\begin{pmatrix} p' & -p' & 0 \\ -p' & p' & 0 \\ 0 & 0 & \nu(1-\nu) \end{pmatrix}
	\qquad\textrm{and}\qquad
	\mathbf{X}_1:=\frac{\theta}{\rho}\begin{pmatrix} 0 & 0 & 0 \\ 0 & 1 & 0 \\ 0 & 0 & 0 \end{pmatrix}.
\end{equation*}
Then, we compute the matrix product
\begin{equation*}
	\mathbf{X}\mathbf{D}=(\mathbf{X}_0+\theta\mathbf{X}_1)(\mathbf{D}_0+\theta\mathbf{D}_1)
		=\mathbf{X}_0\mathbf{D}_0+\theta(\mathbf{X}_0\mathbf{D}_1+\mathbf{X}_1\mathbf{D}_0)+\theta^2\mathbf{X}_1\mathbf{D}_1.
\end{equation*}
Tedious computations bring the following final formulas
\begin{equation*}
	\mathbf{X}_0\mathbf{D}_0=\frac{\nu(1-\nu)(p')^2}{n}
		\begin{pmatrix} +1 & - 1 & 0 \\ -1 & +1 & 0 \\ 0 & 0 & 0 \end{pmatrix}
	\quad\textrm{and}\quad
	\mathbf{X}_1\mathbf{D}_1=\frac{\nu^2}{\rho}
		\begin{pmatrix} 0 & 0 & 0 \\ 0 & 1 & 0 \\ 0 & 0 & 0 \end{pmatrix}\,,
\end{equation*}
together with
\begin{equation*}
	\mathbf{X}_1\mathbf{D}_0+\mathbf{X}_0\mathbf{D}_1=\frac{1}{r}
		\begin{pmatrix} 0 & -\nu p' & 0 \\ -\nu p' & 2\nu p' & 0 \\ 0 & 0 & 1-\nu \end{pmatrix}\,,
\end{equation*}
showing the symmetry of the matrix $\mathbf{D}$.

\subsection{The temperature-less case}

As stated in the Introduction, the case where the Brownian velocity fluctuations are neglected  is  relevant in many applications.
Therefore, let us briefly discuss how the discussion adapts to handle the case $\theta=0$:
we  consider  system \eqref{eq:kinEu}-\eqref{eq:Eul}
where the Fokker-Planck operator in the right-hand side of \eqref{eq:kinEu} is replaced by $\partial_v\left\{(v-u_\eps) f_\eps\right\}$.
This does not modifiy the coupling term in \eqref{eq:Eul} which is still given by $J_\eps -\rho_\eps u_\eps$.
The ``equilibrium state'' that makes the stiff terms vanish is now a Dirac mass with respect to the velocity variable
\begin{equation*}
	f_\eps(t,x,v) \simeq \rho_\eps(t,x)\delta_{v=u_\eps(t,x)}.
\end{equation*}
This modifies the limit equation:
since $\int v^2 f_\eps\ud v \simeq \rho_\eps u_\eps^2$, there is no pressure term induced by the kinetic part of the equation and the limit 
equation becomes
\begin{equation}\label{eq:Eul_lim_th0}
	\left\{\begin{aligned}
	&\partial_t n+ \partial_x (n u) =0,\\
	&\partial_t \rho+ \partial_x (\rho u) =0, \\
	&\partial_t (r u)+ \partial_x \left\{ru ^ 2 + p(n)\right\} =0,
	\end{aligned}\right.
\end{equation}
instead of \eqref{eq:ext}.
Therefore, we can simply use the formula for the flux $F$ and the Jacobian matrix $\mathrm d F$ by setting $\theta=0$.
In particular, the  eigenvalues of $\mathrm dF$ become
\begin{equation}\label{eq:eigenvEuler_notemp}
	\lambda=\lambda_0=u,\quad \lambda_\pm=u\pm\sqrt{np'/r}.
\end{equation}
Accordingly we can set $\theta=0$ in the expressions of subsection~\ref{Sh_sol}.

We shall see that the conclusion is essentially the same for the viscous correction, but the computation
should be performed with some  caution.
The rationale consists in using the fact that $M_u$, defined in \eqref{eq:maxwell}, converges to a Dirac mass
$\delta_{v=u}$ as $\theta\to 0^+$ in the sense of distributions.
Accordingly, we also have
\begin{equation*}
	\begin{aligned}
	\lim_{\theta\to 0^+}\partial_v M_u &= - \lim_{\theta\to 0^+} \dfrac{1}{\theta}(v-u)\,M_u=\delta'_{v=u}\,,\\
	\lim_{\theta\to 0^+}\partial^2_{vv} M_u &= \lim_{\theta\to 0^+} \dfrac{1}{\theta^{2}} \left(|v-u|^2-\theta\right)M_u=\delta''_{v=u}\,,
	\end{aligned}
\end{equation*}
both being weak limits.
Thus, as $\theta\to 0^+$ in the right-hand side of \eqref{eq:eqL1} and in the remainder in \eqref{eq:eqg},
we infer that $g_\eps:=-\dfrac{\rho_\eps}{r_\eps}\partial_x p(n_\eps) \delta'_{v=u_\eps}$ is such that
\begin{equation*}
	\partial_v \left\{(v-u_\eps)g_\eps\right\}= \dfrac{\rho_\eps}{r_\eps} \partial_x p(n_\eps) \delta'_{v=u_\eps}\,,
	\qquad \int g_\eps\ud v=0,\qquad \int vg_\eps\ud v=\dfrac{\rho_\eps}{r_\eps} \partial_x p(n_\eps)\,.
\end{equation*}
Furthermore, the second order moment becomes
\begin{equation*}
	\int v^2g_\eps\ud v=\dfrac{2\rho_\eps u_\eps}{r_\eps} \partial_x p(n_\eps)\,.
\end{equation*}
By using the above formula, we obtain the closed equation \eqref{eq:Eusysdiff} with the  diffusion matrix \eqref{eq:diff_Efp_munu}
where we simply set $\theta=0$, i.e. $\mathbf{D}=\mathbf{D}_0$.

Let us stress that when $\theta=0$ the entropy $\zeta$ is convex but not strictly convex,
since we can easily check that $\mathbf X=\mathbf{X}_0$ is a singular matrix.
In particular, this precludes the possibility of applying the symmetrization method presented in Proposition~\ref{prop:ent_form}.

\subsection{Small-amplitude shock profiles analysis}

As in the previous computations, we may consider, without loss of generality,  a
co-moving frame  such that $u_\ast=0$.
To apply the result \cite[Theorem~4.1]{Pego84}, we focus on a genuinely nonlinear field $\lambda$ for system \eqref{eq:efp},
hence excluding the field $\lambda_0$ (see Proposition~\ref{prop:Efp_fields}).
For definiteness, let us concentrate on the case $\lambda=\lambda_+$, the case $\lambda=\lambda_-$ being similar.
For later convenience, let us recall the identity
\begin{equation}\label{eq:lambdapiusquared}
	\lambda_+^2=\nu p'+\theta(1-\nu)\quad\textrm{with}\quad \nu=n/r\in(0,1)\,.
\end{equation}
We are going to use  the following result, stated and proved in \cite{Pego84}, 
reported here for reader's convenience in a variation fitting the present context
(see \cite{Frei} for an alternative formulation).

\begin{theorem}[Theorem~4.1, \cite{Pego84}]\label{th:pego84}
Let $\boldsymbol{\ell}_+$ and $\mathbf r_+$ denote left and right eigenvectors of the matrix $\mathbf{A}$ relative to the eigenvalue $\lambda_+$, respectively.
In addition, let us assume
\begin{itemize}
\item[\bf i.] $\mathbf D(\W)$ has constant rank  in a neighborhood of  $\W_\ast$;
\item[\bf ii.] there holds $\boldsymbol{\ell}_+\mathbf D \mathbf r_+ (\W_\ast)\neq 0$;
\item[\bf iii.] the operator $\mathbf{B}(\xi):=i\xi(\mathbf{A}-\lambda_+\mathbf{I}) -\mathbf D$ is one-to-one on $\mathbb CZ$ for all $\xi\in\R$,
i.e. $\mathrm{Ker}\,\mathbf{B}(\xi)\bigr|_{\mathbb CZ}=\{0\}$, where 
\begin{equation}\label{eq:defZ}
	Z:=\big\{\mathbf{v}\in\R^3\,:\,(\mathbf{A}-\lambda_+\mathbf{I})\mathbf{v} \in \mathrm{Ran}\,\mathbf D\big\}\,;
\end{equation}
\end{itemize}
Then, the following are equivalent
\begin{itemize}
\item[\bf I.] there holds $\boldsymbol{\ell}_+\mathbf D \mathbf r_+(\W_\ast)>0$;
\item[\bf II.] there exists $\delta>0$ so that if $\W_\ast$ and $\W_\times$ are such that $|\W_\ast-\W_\times|<\delta$ 
and the Rankine--Hugoniot condition holds for some speed $c$, there exists a shock profile connecting $\W_\ast$ to $\W_\times$
if and only if Liu's entropy criterion \eqref{eq:gen_liu} is satisfied.
\end{itemize}
\end{theorem}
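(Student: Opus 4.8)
The plan is to follow the geometric travelling-wave argument of \cite{Pego84}. First I would pass to the variable $y:=x-ct$ and integrate the profile equation once (as in \eqref{eq:profile} with $\eps=1$): a shock profile joining $\W_\ast$ to $\W_\times$ is then a solution of the differential--algebraic system
\begin{equation*}
	\mathbf D(\mathrm W)\,\frac{\dd\mathrm W}{\dd y}=h(\mathrm W,c):=F(\mathrm W)-F(\W_\ast)-c\,(\mathrm W-\W_\ast),
\end{equation*}
with $\mathrm W(-\infty)=\W_\ast$ and $\mathrm W(+\infty)=\W_\times$. Since $h(\W_\ast,c)=0$ and, by \eqref{eq:gen_rh}, $h(\W_\times,c)=0$, the task is to produce a heteroclinic orbit joining two rest points that are $\mathscr O(|\W_\ast-\W_\times|)$ apart, with the prescribed orientation. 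The strategy has three steps: (a) reduce this DAE, near $\W_\ast$ and for $c$ near $\lambda_+(\W_\ast)$, to a genuine ODE on a smooth invariant manifold; (b) isolate a one-dimensional centre manifold on which the reduced flow is scalar; (c) read off existence and the Liu dichotomy from the resulting scalar Burgers-type equation.

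For step (a) I would use hypothesis \textbf{i.}: constant rank of $\mathbf D$ supplies smooth projectors $\Pi$ onto $\mathrm{Ran}\,\mathbf D$ and $I-\Pi$ onto a complement, splitting the DAE into an evolution part and the algebraic constraint $(I-\Pi)\,h(\mathrm W,c)=0$. At $(\W_\ast,\lambda_+(\W_\ast))$, the differential of the constraint in the directions along $\mathrm{Ker}\,\mathbf D$ is $(I-\Pi)(\mathbf A-\lambda_+\mathbf I)$ restricted to $\mathrm{Ker}\,\mathbf D$, and hypothesis \textbf{iii.} at $\xi=0$, namely $Z\cap\mathrm{Ker}\,\mathbf D=\{0\}$ with $Z$ as in \eqref{eq:defZ}, makes precisely this map injective; the implicit function theorem then solves the constraint and realizes its zero set as a smooth manifold $\mathcal M\ni\W_\ast$ carrying a well-defined reduced vector field. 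For step (b), the linearization of that field at $\W_\ast$ has exactly one eigenvalue near $0$ (because $c\approx\lambda_+(\W_\ast)$), while hypotheses \textbf{ii.}--\textbf{iii.} (the Kawashima-type coupling, now invoked for all $\xi\in\R$; cf.\ \cite{Kaw,KS}) furnish a spectral gap keeping the other directions hyperbolic. Standard centre-manifold theory then yields, for $(\mathrm W,c)$ near $(\W_\ast,\lambda_+(\W_\ast))$, a one-dimensional invariant centre manifold that contains both rest points, so any connecting orbit must lie on it.

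For step (c), parametrizing that centre manifold by a scalar $u$ aligned with $\mathbf r_+(\W_\ast)$, the reduced equation has the viscous-Burgers normal form
\begin{equation*}
	m\,\dot u=\tfrac12\,\big(\nabla_{\W}\lambda_+\!\cdot\!\mathbf r_+\big)(\W_\ast)\,u^2-\big(c-\lambda_+(\W_\ast)\big)\,u+\mathscr O\!\big(|u|^3\big),
\end{equation*}
where the effective dissipation coefficient $m$ equals $\boldsymbol{\ell}_+\mathbf D\mathbf r_+(\W_\ast)$ up to the positive normalization $\boldsymbol{\ell}_+\mathbf r_+=1$, and is \emph{nonzero} exactly by hypothesis \textbf{ii.}, while the quadratic coefficient is nonzero by genuine nonlinearity of $\lambda_+$ (Proposition~\ref{prop:Efp_fields}). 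Its two rest points are $u=0$ (the state $\W_\ast$) and $u=u_\times\simeq 2\big(c-\lambda_+(\W_\ast)\big)/\big(\nabla_{\W}\lambda_+\!\cdot\!\mathbf r_+\big)$, which, after matching with the Hugoniot parametrization $s\mapsto(\W(s),c(s))$ recalled after \eqref{eq:gen_rh}, I would identify with $\W_\times$. A scalar equation with two hyperbolic rest points always admits a heteroclinic between them, but its direction is governed by the sign of $m\cdot(c-\lambda_+(\W_\ast))$: it runs from $\W_\ast$ at $-\infty$ to $\W_\times$ at $+\infty$, as required, iff $m>0$ and $c<\lambda_+(\W_\ast)$. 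Since Liu's criterion \eqref{eq:gen_liu} for small $+$-shocks is exactly $c<\lambda_+(\W_\ast)$, the profile exists iff \eqref{eq:gen_liu} holds \emph{and} $\boldsymbol{\ell}_+\mathbf D\mathbf r_+(\W_\ast)>0$, which is the equivalence \textbf{I.}$\Leftrightarrow$\textbf{II.}.

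I expect the genuine difficulty to be steps (a)--(b): carrying out the reduction of the DAE with a truly partial (rank-deficient) $\mathbf D$ and securing the uniform spectral gap that legitimizes the centre-manifold step --- both of which rest essentially on hypotheses \textbf{i.} and \textbf{iii.} --- together with the care needed to match the scalar equilibrium $u_\times$ with the prescribed state $\W_\times$ (an implicit-function argument comparing the centre-manifold expansion with the Rankine--Hugoniot curve). By contrast, tracing the sign of $\boldsymbol{\ell}_+\mathbf D\mathbf r_+$ down to the coefficient $m$ is a short linear-algebra computation. The complete details are in \cite{Pego84}; see also \cite{Frei}.
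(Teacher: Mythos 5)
This statement is not proved in the paper at all: it is Theorem~4.1 of \cite{Pego84}, quoted verbatim ``for reader's convenience,'' so there is no internal proof to compare against. Your sketch --- constant-rank reduction of the profile DAE using hypothesis \textbf{i.} together with the $\xi=0$ instance of \textbf{iii.} (which is exactly $Z\cap\mathrm{Ker}\,\mathbf D=\{0\}$), a spectral gap from \textbf{iii.} for $\xi\neq 0$, a one-dimensional centre manifold, and the scalar Burgers normal form whose dissipation coefficient is $\boldsymbol{\ell}_+\mathbf D\,\mathbf r_+(\W_\ast)$ --- is precisely the strategy of Pego's original argument, with each hypothesis playing the role you assign to it.
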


Verification of the above assumptions leads to the proof of existence
of shock profiles in the small amplitude regime.

\begin{theorem}\label{th:th2}
Let $\theta\geq 0$ and let $\W_\ast$ and $\W_\times$ are such that the Rankine--Hugoniot condition is satisfied for some speed $c$.
Then there exists $\delta>0$ so that there exists a shock profile solution to \eqref{eq:Eusysdiff}
connecting $\W_\ast$ to $\W_\times$ with $|\W_\ast-\W_\times|<\delta$. 
\end{theorem}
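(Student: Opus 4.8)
The plan is to deduce the statement from Pego's theorem (Theorem~\ref{th:pego84}): verify its three structural hypotheses (i)--(iii) and its positivity condition (I), and then read off the profile from the equivalence (I)$\iff$(II), observing that Liu's criterion \eqref{eq:gen_liu} is automatic here by Proposition~\ref{prop:liucond}. By the Galilean invariance of \eqref{eq:Eusysdiff} established above we normalise $u_\ast=0$. The field $\lambda_0$ is linearly degenerate (Proposition~\ref{prop:Efp_fields}) and carries no strict shock profile, so we restrict to the genuinely nonlinear fields $\lambda_\pm$ -- equivalently, by Lemma~\ref{lem:zerojump}, to jumps with $\llbracket u\rrbracket\neq 0$ -- and we treat $\lambda=\lambda_+$, the case $\lambda=\lambda_-$ being symmetric. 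Throughout we use the standing assumptions $n_\ast,\rho_\ast>0$ and $p'(n_\ast)>0$, i.e. $\nu_\ast:=n_\ast/r_\ast\in(0,1)$, which ensure strict hyperbolicity and genuine nonlinearity of $\lambda_+$ at $\W_\ast$.

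At $u_\ast=0$ one has $\lambda_+=\sqrt{\nu p'+\theta(1-\nu)}$ (cf. \eqref{eq:lambdapiusquared}), right eigenvector $\mathbf r_+=(1,\rho/r,\lambda_+)^\intercal$ and left eigenvector $\boldsymbol{\ell}_+$ solving $\boldsymbol{\ell}_+(\mathbf A-\lambda_+\mathbf I)=0$; when $\theta>0$ one may simply take $\boldsymbol{\ell}_+=\mathbf r_+^\intercal\,\mathrm d^2_{\W}\zeta$, which is a left eigenvector because $\mathrm d^2_{\W}\zeta$ symmetrises $\mathbf A$ (Section~\ref{entropy}), and for $\theta=0$ the same formula still defines a nonzero left eigenvector since $\mathbf r_+\notin\ker\mathbf X_0$. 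Hypothesis (i) is immediate: $\mathbf D$ in \eqref{eq:diff_Efp_munu}--\eqref{eq:defD0D1} is lower triangular with diagonal $(0,\nu(1-\nu)p'+\theta\nu^2,\theta(1-\nu))$, whose vanishing pattern is locally constant near $\W_\ast$ (rank two if $\theta>0$, rank one if $\theta=0$). For hypothesis (ii), $\boldsymbol{\ell}_+\mathbf D\mathbf r_+(\W_\ast)\neq 0$, it suffices that $\mathbf D\mathbf r_+\neq 0$, which is exactly the Kawashima--Shizuta computation already carried out above along the $\lambda_+$ field for all $\theta\geq 0$; the precise nonvanishing of the scalar then follows from the sign computation of step (I) below.

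The crux is hypothesis (iii): $\mathbf B(\xi):=i\xi(\mathbf A-\lambda_+\mathbf I)-\mathbf D$ must be one-to-one on the complexification of $Z$ in \eqref{eq:defZ} for every real $\xi$. For $\xi=0$ this reduces to $Z\cap\ker\mathbf D=\{0\}$, which follows from the explicit description of $\ker\mathbf D$ and $\mathrm{Ran}\,\mathbf D$ coming from the triangular form of $\mathbf D$ together with the Kawashima--Shizuta property of the $\lambda_+$ field. For $\xi\neq 0$, assuming $\mathbf B(\xi)\mathbf v=0$ with $\mathbf v\in\mathbb C Z$, one pairs the relation with $\overline{\mathbf v}$ (or, for $\theta>0$, with $\mathbf X\overline{\mathbf v}$ in order to symmetrise $\mathbf A$) and separates real and imaginary parts: the real part controls the nonnegative quantity $\overline{\mathbf v}^{\,\intercal}\mathbf D_{\mathrm{sym}}\mathbf v$ while the imaginary part is linear in $\xi$, and one is forced to conclude $\mathbf D\mathbf v=0$ and then $(\mathbf A-\lambda_+\mathbf I)\mathbf v=0$, hence $\mathbf v=0$ by the $\xi=0$ case. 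I expect this verification to be the main obstacle in the temperature-less case $\theta=0$: there $\mathbf D=\mathbf D_0$ is rank one and the symmetriser $\mathbf X_0$ is singular, so the pairing must be done directly and one has to exploit that the single surviving dissipative component of $\mathbf D_0$ still controls the component of $\mathbf v$ transverse to $\mathbf r_+$.

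It remains to check condition (I), $\boldsymbol{\ell}_+\mathbf D\mathbf r_+(\W_\ast)>0$. For $\theta>0$ we write $\boldsymbol{\ell}_+\mathbf D\mathbf r_+=\mathbf r_+^\intercal(\mathrm d^2_{\W}\zeta\,\mathbf D)\mathbf r_+$ and observe that $\mathrm d^2_{\W}\zeta\,\mathbf D$ is symmetric and positive semidefinite (its $(r,\rho)$-block has vanishing determinant and positive trace, and its $w$-entry equals $\theta(1-\nu)/r>0$), so this form is $\geq 0$ and vanishes only if $\mathbf r_+\in\ker\mathbf D$, which is excluded; for $\theta=0$ a direct computation with $\mathbf X_0\mathbf D_0$ gives $\boldsymbol{\ell}_+\mathbf D\mathbf r_+=\frac{\nu(1-\nu)(p')^2}{n}(1-\rho/r)^2=\frac{\nu^3(1-\nu)(p')^2}{n}>0$. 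With hypotheses (i)--(iii) and condition (I) verified and Liu's criterion automatic by Proposition~\ref{prop:liucond}, Theorem~\ref{th:pego84} provides, for every $\W_\times$ on the relevant Hugoniot branch of $\W_\ast$ with $|\W_\ast-\W_\times|$ smaller than some $\delta>0$, a shock profile of \eqref{eq:Eusysdiff} connecting $\W_\ast$ to $\W_\times$, which is the assertion.
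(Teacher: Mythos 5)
Your proposal follows the paper's strategy exactly at the top level -- normalise $u_\ast=0$ by Galilean invariance, discard the linearly degenerate field, verify hypotheses {\bf i.}--{\bf iii.} and condition {\bf I.} of Theorem~\ref{th:pego84} separately for $\theta=0$ and $\theta>0$, and close with Liu's criterion via Proposition~\ref{prop:liucond} -- but it diverges in how the two delicate hypotheses are checked, and one of those checks is left as a sketch. For condition {\bf I.} your route is genuinely different and arguably cleaner: writing $\boldsymbol{\ell}_+\mathbf D\mathbf r_+=\mathbf r_+^\intercal(\ud^2_{\W}\zeta\,\mathbf D)\mathbf r_+$ and invoking positive semidefiniteness replaces the paper's explicit evaluation $(1-\nu)\{\nu^2(p'-\theta)^2+\theta\lambda_+^2\}$; note however that the paper only proves \emph{symmetry} of $\ud^2_{\W}\zeta\,\mathbf D$, so the semidefiniteness you use is an additional fact that must be checked -- it holds, but only because the determinant of the upper $2\times 2$ block of $\mathbf X\mathbf D$ cancels \emph{identically} (one finds $\theta^2\nu^2\bigl(\tfrac{\nu(1-\nu)(p')^2}{n\rho}-\tfrac{(p')^2}{r^2}\bigr)=0$ using $\nu(1-\nu)=n\rho/r^2$), which is exactly the kind of computation your "vanishing determinant and positive trace" clause glosses over. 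For hypothesis {\bf iii.} your pairing argument (test $\mathbf B(\xi)\mathbf v=0$ against $\mathbf X\overline{\mathbf v}$, use that $\mathbf X(\mathbf A-\lambda_+\mathbf I)$ is symmetric and $\mathbf X\mathbf D$ is symmetric PSD to force $\mathbf D\mathbf v=0$, then $(\mathbf A-\lambda_+\mathbf I)\mathbf v=0$ and conclude by Kawashima--Shizuta) does work for both temperatures -- for $\theta=0$ the real part gives $v_1=v_2$, which already kills $\mathbf D_0\mathbf v$ despite $\mathbf X_0$ being singular -- whereas the paper proceeds by brute force, identifying $Z$ explicitly and computing $\textrm{Re}(\det\mathbf M)<0$ for a reduced $2\times2$ system. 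The one place where you misjudge the landscape is the temperature-less case, which you flag as "the main obstacle": it is in fact the trivial case, because for $\theta=0$ the set $Z$ of \eqref{eq:defZ} collapses to the one-dimensional eigenspace $\textrm{Ker}(\mathbf A-\lambda_+\mathbf I)$ (the paper shows $\alpha=0$ by direct elimination), so that $\mathbf B(\xi)\bigr|_{\mathbb C Z}=-\mathbf D\bigr|_{\mathbb C Z}$ independently of $\xi$ and injectivity is literally the statement $\mathbf D_0\mathbf r_+\neq 0$. So: same theorem invoked, same logical skeleton, a more conceptual but incompletely executed verification of {\bf iii.} and {\bf I.}; to make your argument self-contained you must actually prove the positive semidefiniteness of $\ud^2_{\W}\zeta\,\mathbf D$ and carry out the $\theta=0$ pairing (or, better, observe the collapse of $Z$).
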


\begin{proof} 
The result is proved if  the assumption of Theorem \ref{th:pego84} are satisfied.
Without loss of generality, we consider the case $u=0$ by using once more the invariance with respect to Galilean transformations.

{\it Case $\theta=0$.}
For zero temperature, the matrix $\mathbf{D}$ reduces to $\mathbf{D}_0$ defined in \eqref{eq:defD0D1}.
Also, a triple of right/left eigenvectors of $\mathbf{A}$ relative to the eigenvalue $\lambda_k$ is given by $\mathbf{r}_{k}=(1,1-\nu,\lambda_{k})$
and $\boldsymbol{\ell}_{k}=(p',-p'+\theta,\lambda_{k})$ where $k\in\{0,\pm\}$.
Condition {\bf i.} in Theorem \ref{th:pego84} is clearly satisfied since  $\mathrm{Ran}\,\mathbf{D}(\W)$ coincides with
$\textrm{Span}\{\mathbf{e}_2\}$ for any $\W$ where $\{\mathbf{e}_1,\mathbf{e}_2,\mathbf{e}_3\}$ is the canonical basis of $\mathbb R^3$.
As a consequence, $\mathrm{Ran}\,\mathbf{D}(\W)$ has rank one.

Next, we state that  $Z$ coincides with $\mathrm{Span}\{\mathbf r_+\}$.
Indeed, let us consider the vector $\mathbf{v}=(x,y,z)\in\R^3$ such that $(\mathbf{A}-\lambda_+\mathbf{I})\mathbf{v}\in\mathrm{Ran}\,\mathbf{D}$.
Then there holds
\begin{equation*}
	(\mathbf{A}-\lambda_+\mathbf{I})\mathbf{v}
	=\begin{pmatrix} -\lambda_+ & 0 & 1 \\ 0 & -\lambda_+ & \rho/r \\ p' & -p' & -\lambda_+ \end{pmatrix}\begin{pmatrix} x \\ y \\ z \end{pmatrix}
	=\begin{pmatrix} -d x + z \\  -d y + \rho z/r \\ p' x -p' y -d z \end{pmatrix}
	=\alpha \mathbf{e}_2\,,
\end{equation*}
for some $\alpha\in\mathbb{R}$.
Plugging the relation $z=d x$, into the third component, we deduce the identity $y=\rho x/r$.
Finally, we insert both equations for $z$ and $y$, into the second component getting
\begin{equation*}
	-d y + \dfrac{1}{r}\rho z=-\dfrac{1}{r} d \rho x+\dfrac{1}{r} d \rho x = \alpha 
\end{equation*}
which implies $\alpha=0$.
In particular, the set $Z$ coincides with the one-dimensional eigenspace of the eigenvalue $\lambda_+$,
that is, $Z=\textrm{Ker}(\mathbf{A}-\lambda_+\mathbf{I})$.
Thus, we are required to analyze the kernel of the operator $\mathbf{B}(\xi)$ restricted to $Z$,
that is, we look for vectors $\mathbf{v}=\alpha\mathbf{r}_+$ for some $\alpha\in\mathbb{C}$ such that
$\mathbf{B}(\xi)\mathbf{v}=-\alpha\mathbf{D}\mathbf{r}_+=\mathbf{0}$.
Since the Kawashima--Shizuta condition is satisfied also for $\theta=0$, $\mathbf{D}\mathbf{r}_+\neq \mathbf{0}$ and, therefore, $\alpha=0$.
As a consequence, hypothesis {\bf iii.} is satisfied.

Finally, let us show that conditions {\bf iii.}/{\bf I.} are also verified.
Indeed, there holds
\begin{equation*}
	\boldsymbol{\ell}_+\mathbf D \mathbf r_+=\nu(1-\nu)p'\begin{pmatrix} p' & -p' & \lambda_+\end{pmatrix}
	\begin{pmatrix} 0 & 0 & 0 \\ -1  & 1 & 0 \\ 0 & 0 & 0 \end{pmatrix} \begin{pmatrix} 1 \\ 1-\nu \\ \lambda_+ \end{pmatrix}
	=\nu^2(1-\nu)(p')^2>0\,.
\end{equation*}

{\it Case $\theta>0$.}
For strictly positive temperatures, it is readily verified that $\mathrm{Ran}\,{\mathbf D}(\W)=\textrm{Span}\left\{\mathbf{e}_2, \mathbf{e}_3\right\}$
for any $\W$.
Hence, hypothesis {\bf i.} holds.
 
A vector $\mathbf{v}=(x,y,z)$ lies in $Z$ if and only if $z=\lambda_+ x$.
Therefore the action of the linear operator $\mathbf{B}(\xi)$ is described by
\begin{equation*}
	\begin{aligned}  
	\mathbf{B}(\xi)\mathbf{v}
	&=i\xi\begin{pmatrix} 0 \\ (1-\nu)\lambda_+ x-\lambda_+ y \\
          (p'-\lambda_+^2)x +(-p'+\theta)y \end{pmatrix} 
	+\begin{pmatrix} 0 \\ -\nu(1-\nu)p' x+\nu\{(1-\nu)p'+\theta\nu\}y\\
          \theta\lambda_+ (1-\nu)x \end{pmatrix} 
	\end{aligned}   
\end{equation*}
which can be rewritten as a reduced two dimensional system with coefficient matrix
\begin{equation*}
	\begin{aligned}
	\mathbf{M}&:=\begin{pmatrix}	i\xi(1-\nu)\lambda_+-\nu(1-\nu)p'			&	-i\xi\lambda_+ + \nu(1-\nu)p' +\theta\nu^2	\\ 
							i\xi(p'-\lambda_+^2)+\theta\lambda_+(1-\nu)	&	-i\xi(p'-\theta)					\end{pmatrix}\\
			&\;=\begin{pmatrix}	i\xi(1-\nu)\lambda_+-\nu(1-\nu)p'			&	-i\xi\lambda_+ + \nu(1-\nu)p' +\theta\nu^2	\\ 
							i\xi(1-\nu)(p'-\theta)+\theta\lambda_+(1-\nu)	&	-i\xi(p'-\theta)					\end{pmatrix}
	\end{aligned}
\end{equation*}
The real part of the determinant of $\mathbf{M}$ is
\begin{equation*}
	 \textrm{Re}(\det\mathbf{M})=-\theta\lambda_+\{\nu(1-\nu)p'+\theta\nu^2\}(1-\nu)
		<-\theta\lambda_+\nu(1-\nu)^2p'
\end{equation*}
which is strictly negative for any $\theta>0$ and $\nu\in(0,1)$.
Hence, the linear transformation $\mathbf{M}$ is a one-to-one correspondence, exhibiting the validity of {\bf iii.}

Finally, we compute explicitly the value of $\boldsymbol{\ell}_+\mathbf D \mathbf r_+>0$.
Since
\begin{equation*}
	\begin{aligned}
	\mathbf{D}\mathbf{r_+}&
	=\begin{pmatrix} 0 & 0 & 0 \\ -\nu(1-\nu)p'  & \nu(1-\nu)p'+\theta\nu^2 & 0 \\ 0 & 0 & \theta(1-\nu) \end{pmatrix}
		\begin{pmatrix} 1 \\ 1-\nu \\ \lambda_+ \end{pmatrix}
	=(1-\nu)\begin{pmatrix} 0 \\ -\nu^2(p'-\theta) \\ \theta\lambda_+ \end{pmatrix}\,,
	\end{aligned}
\end{equation*}
there holds
\begin{equation*}
	\begin{aligned}
	\boldsymbol{\ell}_+\mathbf{D}\mathbf{r}_+
		=(1-\nu)\nu^2(p'-\theta)^2+\theta^2\lambda_+\geq \theta^2\lambda_+>0.
	\end{aligned}
\end{equation*}
Thus, since Liu's entropy condition is satisfied (see Proposition~\ref{prop:liucond}),
we deduce the existence of small amplitude shock profiles as a consequence of Theorem~4.1 in \cite{Pego84}.
\end{proof}

Furthermore, conditions described in \cite{HumpZumb02} and \cite{MascZumb04a} are satisfied, so that the small
amplitude shock profiles are also asymptotically stable in some appropriate Sobolev space.

\section{Large amplitude profiles for viscous Euler fluid-particle system}
\label{num}

In this final Section, we continue the analysis relative to the existence of shock profiles for \eqref{eq:efp} in the large amplitude regime.
Such a choice is dictated by the fact that the model has the additional feature of being invariant with respect to Galilean transformations.
As a consequence, we can assume, without loss of generality, that the chosen reference frame is comoving
with the wave, i.e. the speed $c$ is equal to zero.
Hence, after the straightforward rescaling $x\mapsto \y:=x/\eps$, we search for a solution $\mathrm W=(r,\rho,w)$ of  
\begin{equation}\label{eq:profileEuler}
	\mathbf D(\mathrm W)\frac{\ud \mathrm W}{\ud \y} = F(\mathrm W)-F(\W_\ast),
\end{equation}
where the flux $F$ has been introduced in \eqref{eq:eul_flux} and the diffusion matrix $\mathbf{D}=\mathbf{D}_0+\theta \mathbf{D}_1$
with $\mathbf{D}_0$ and $\mathbf{D}_1$ defined in \eqref{eq:defD0D1}.
Moreover, we assume that the solution $\mathrm W$ is subjected to far-end states, denoted by $\W_\ast$
and ${\W}_\times$, which are related by the Rankine--Hugoniot conditions \eqref{eq:rh_3d}. 
Whether the far-end state of the asymptotic values $\W_\ast$ and ${\W}_\times$ is reached at $-\infty$
or at $+\infty$ will be made precise further on.

Since the first row of $\mathbf D$ vanishes, the first equation in \eqref{eq:profileEuler} imposes that $w$ is constant:
\begin{equation*}
	w=w_\ast:=r_\ast u_\ast\,.
\end{equation*}
We are thus led to a $2\times 2$ differential system for the pair $(r,\rho)$ given by
\begin{equation*}
	\left\{\begin{aligned}
	& -\frac{np'(n)\rho}{r^2}\frac{\ud r}{\ud\y}+\left\{\frac{np'(n)\rho}{r^2}+\frac{\theta n^2}{r^2}\right\}\frac{d\rho}{\ud\y} = \left(\frac{\rho}{r}-\frac{\rho_\ast}{r_\ast}\right)w_\ast\,,\\
	& -\theta\,\frac{\rho w_\ast}{r^2}\frac{\ud r}{\ud\y} = \frac{w_\ast^2}{r}+p(n)+\theta \rho-\frac{w^2_\ast}{r_\ast}-p(n_\ast)-\theta \rho_\ast\,,
	\end{aligned}\right.
\end{equation*}
which, on its turn, is equivalent to a system for the  pair $(r,n)$ that is
\begin{equation}\label{eq:ode_profile}
	\left\{\begin{aligned}
	&-\frac{\theta n^2}{r^2}\frac{\ud r}{\ud\y} + \left(\frac{\theta n^2}{r^2}+\frac{\rho\,np'(n)}{r^2}\right)\frac{\ud n}{\ud\y} = w_\ast\left( \frac{n}{r}-\frac{n_\ast}{r_\ast} \right),\\
	&-\frac{\theta \rho w_\ast}{r^2}\frac{\ud r}{\ud\y} = \frac{w_\ast^2}{r}+p(n)+\theta \rho-\frac{w^2_\ast}{r_\ast}-p(n_\ast)-\theta \rho_\ast
	\end{aligned}\right.
\end{equation}
Any solution to the dynamical system \eqref{eq:ode_profile} asymptotically converging to $\W_\ast$ and $\W_\times$
corresponds to a (smooth) shock profile for \eqref{eq:Eusysdiff}.

\subsection{Analysis of the temperature-less case}

When $\theta=0$ and $u_\ast\neq 0$, system \eqref{eq:ode_profile} degenerates to the scalar differential equation for $n$
\begin{equation}\label{eq:odeth0}
	\frac{(r-n) p'(n)}{r^2}\ \frac{\ud n}{\ud\y}  = u_\ast\left( \frac{r_\ast}{r}-\frac{n_\ast}{n}\right)\,,
\end{equation}
coupled with the identity 
\begin{equation}\label{eq:constraint}
	\frac{r}{r_\ast}=\frac{r_\ast u_\ast^2}{r_\ast u_\ast^2 + p_\ast-p(n)}\,. 
\end{equation}
where $p_\ast:=p(n_\ast)$.
We bear in mind that the function $r=n+\rho$ has the meaning of a {\it hybrid density}, being the sum
of the densities of the carrier and the disperse phases, denoted by $n$ and
$\rho$, respectively. The system degenerating to a single equation, we
  replaced $\rho$ by $r-n$ in \eqref{eq:odeth0}.
Accordingly, $r$ is required to satisfy the admissibility constraint $r>n$ for any ${n\in(0,\infty)}$, since $\rho=r-n>0$. 
 Under this constraint, one sees at once that the equilibrium states of \eqref{eq:odeth0} satisfy $r_\ast/r=n_\ast/n$. 

To make our computations on system
\eqref{eq:odeth0}--\eqref{eq:constraint}  easier to follow, we will introduce
rescaled variables. However, we will formulate our main theorem in the natural variables. Let
\begin{equation}\label{eq:variousdef:1}
        \n:=\dfrac{n}{n_\ast}\qquad\textrm{and}\qquad \r:=\dfrac{r}{r_\ast}\,,
\end{equation}
together with the auxiliary parameters
\begin{equation}\label{eq:variousdef}
	\tau:=\frac{n_\ast}{r_\ast}\in(0,1)\,,\qquad
	\kappa:=\frac{r_\ast u_\ast^2}{p_\ast}\in(0,\infty)\,,\qquad
	\kappa_\ast:=\frac{n_\ast p'_\ast}{p_\ast}\in(0,\infty)\,,
\end{equation}
where $p'_\ast=p'(n_\ast)$.
The parameter $\tau$ describes the ratio between the density of the disperse phase and the corresponding total density. 
 In particular, in term of the rescaled variables, the discussion about the sign of $\rho$ will then concern the one of $\r-\tau\,\n$.
The dimensionless number $\kappa$ is reminiscent of the Eckert number in fluid mechanics and
it compares the kinetic energy of the mixture to  the pressure of the carrier phase.
The value $\kappa_\ast$ is a given threshold separating different behaviors for the solution
of problem \eqref{eq:odeth0}--\eqref{eq:constraint}. 
Note that, once $n_\ast$ is fixed, $\kappa_\ast$ is completely determined.
Moreover, if $\tau$ is fixed, $r_\ast$ is also given.
Finally, if additionally $\kappa$ is fixed, the absolute value of $u_\ast$ is determined by the formula
\begin{equation}\label{eq:signuast}
	|u_\ast|=\sqrt{p_\ast\tau\kappa/n_\ast}\,.
\end{equation}
Finally, let us introduce the rescaled pressure
\begin{equation}
	\p(\n):=\frac{p(n_\ast \n)}{p_\ast}\,.
\end{equation}
Note that the function $\p$ shares the same monotonicity and convexity of $p$ and that
\begin{equation}\label{eq:tworel4p}
	\p(0)=0\,,\qquad
	\p(1)=1\,,\qquad
	\p'(1)=\kappa_\ast\,.
\end{equation}
Taking advantage of the previous definitions, the differential equation \eqref{eq:odeth0}
with constraint \eqref{eq:constraint} rewrites as
\begin{equation}\label{eq:rescaled}
	\left\{\begin{aligned}
	&\frac{u_*}{\kappa}\ \frac{\left(\r-\tau\n\right) \p'(\n)}{\r^2}\frac{\ud\n}{\ud \y}=\T(\n,\r):=\frac{1}{\r}-\frac{1}{\n}\,,\\
	&\r=\r_\kappa(\n):=\dfrac{\kappa}{1+\kappa-\p(\n)}\,.
	\end{aligned}\right.
\end{equation}
where  the function $\n\mapsto \r_\kappa(\n)$ is defined for $\n\in\left(0,\bar{\n}(\kappa)\right)$
with $\bar{\n}(\kappa):=\p^{-1}(1+\kappa)$.

\begin{lemma}\label{lem:ncross}
For any $\kappa>0$ with $\kappa\neq \kappa_\ast$ there exists a unique $\n(\kappa)\neq 1$ solution to $\g(\n):=\T(\n,\r_\kappa(\n))=0$.
Moreover, the function $\kappa\mapsto \n(\kappa)$ is one-to-one from $(0,\infty)\setminus\{\kappa_\ast\}$ to $(0,\infty)\setminus\{1\}$
with $\n(\kappa) < 1$ if and only if $\kappa<\kappa_\ast$. 
\end{lemma}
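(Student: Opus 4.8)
The plan is to reduce the equilibrium equation $\g(\n)=0$ to a scalar equation $\kappa=\Phi(\n)$ for an explicit function $\Phi$, to show that $\Phi$ is a strictly increasing bijection of $(0,\infty)$ onto itself with $\Phi(1)=\kappa_\ast$, and then to read off every assertion of the lemma from this.

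First I would use that $\T(\n,\r)=\r^{-1}-\n^{-1}$, so that $\g(\n)=\T(\n,\r_\kappa(\n))=0$ is equivalent to $\r_\kappa(\n)=\n$. Since $\r_\kappa(\n)=\kappa/(1+\kappa-\p(\n))$ with $1+\kappa-\p(\n)>0$ on the domain $(0,\bar{\n}(\kappa))$, this is in turn equivalent to $\n\,(1+\kappa-\p(\n))=\kappa$, i.e.\ to $\kappa(\n-1)=\n(\p(\n)-1)$. The value $\n=1$ always solves this (consistently with $\r_\kappa(1)=1$, since $\p(1)=1$), and it corresponds to the trivial equilibrium $\W_\times=\W_\ast$, which is why the statement asks for $\n(\kappa)\neq1$; for $\n\neq 1$ the condition reads $\kappa=\Phi(\n)$ with
\[
	\Phi(\n):=\frac{\n\,(\p(\n)-1)}{\n-1},\qquad \n\in(0,\infty)\setminus\{1\}.
\]
I would then record the elementary properties of $\Phi$: since $(\p(\n)-1)/(\n-1)\to\p'(1)=\kappa_\ast$ as $\n\to1$ (definition of the derivative, using \eqref{eq:tworel4p}), $\Phi$ extends continuously to $\n=1$ with $\Phi(1)=\kappa_\ast$; and since $\p$ is positive and increasing on $(0,\infty)$ with $\p(0)=0$ and $\p(\n)\to+\infty$ as $\n\to+\infty$ (coercivity of $p$), one checks $\Phi>0$ on $(0,\infty)$ together with $\Phi(\n)\to0$ as $\n\to0^+$ and $\Phi(\n)\to+\infty$ as $\n\to+\infty$.

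The main step — the only one that is not bookkeeping — will be to prove that $\Phi$ is strictly increasing on $(0,\infty)$. Differentiating for $\n\neq1$ gives $\Phi'(\n)=\psi(\n)/(\n-1)^2$ with $\psi(\n):=\n\,\p'(\n)(\n-1)-(\p(\n)-1)$, so it suffices to show $\psi>0$ on $(0,\infty)\setminus\{1\}$. The trick is to differentiate once more: $\psi(1)=0$ and
\[
	\psi'(\n)=(\n-1)\bigl(2\,\p'(\n)+\n\,\p''(\n)\bigr),
\]
where $2\,\p'(\n)+\n\,\p''(\n)>0$ because $\p',\p''>0$ on $(0,\infty)$ (inherited from \eqref{eq:hyppressure}). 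Hence $\psi$ is strictly decreasing on $(0,1)$ and strictly increasing on $(1,\infty)$, and $\psi(1)=0$ forces $\psi>0$ away from $\n=1$; thus $\Phi'>0$ on $(0,\infty)\setminus\{1\}$, and since $\Phi$ is continuous at $\n=1$ it is strictly increasing on all of $(0,\infty)$.

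Finally I would assemble the conclusion. By the monotonicity and the two limits, $\Phi:(0,\infty)\to(0,\infty)$ is a continuous strictly increasing bijection with $\Phi(1)=\kappa_\ast$, so $\Phi^{-1}$ is a strictly increasing bijection of $(0,\infty)$. For $\kappa\neq\kappa_\ast$ the unique root of $\kappa=\Phi(\n)$ is $\n(\kappa):=\Phi^{-1}(\kappa)\neq1$, which by the first step is the unique solution of $\g(\n)=0$ with $\n\neq1$; the map $\kappa\mapsto\n(\kappa)$ is the restriction of $\Phi^{-1}$, hence one-to-one from $(0,\infty)\setminus\{\kappa_\ast\}$ onto $(0,\infty)\setminus\{1\}$, and $\n(\kappa)<1\iff\kappa=\Phi(\n(\kappa))<\Phi(1)=\kappa_\ast$. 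The only genuinely delicate point is the positivity of $\psi$, which is precisely where the convexity of the pressure enters.
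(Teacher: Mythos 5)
Your proof is correct, and the decisive computation is exactly the one the paper uses: the function you call $\Phi$ is the paper's $\kappa(\n)=\n\{\p(\n)-1\}/(\n-1)$, and your positivity argument for $\psi(\n)=\n(\n-1)\p'(\n)+1-\p(\n)$ (vanishing at $\n=1$ with $\psi'(\n)=(\n-1)\{2\p'(\n)+\n\p''(\n)\}$) is word for word the paper's proof that $\kappa'(\n)>0$. The organizational difference is that the paper runs two separate arguments: first, for each fixed $\kappa\neq\kappa_\ast$, it establishes existence and uniqueness of the nontrivial zero of $\g$ by a direct phase-line analysis ($\g$ is concave because $\g'(\n)=\n^{-2}-\p'(\n)/\kappa$ is decreasing, $\g(1)=0$, $\g(0^+)=-\infty$, $\g(\bar\n)<0$, and the sign of $\g'(1)=1-\kappa_\ast/\kappa$ locates the second root on the correct side of $1$); only then does it invert $\n\mapsto\kappa(\n)$ to get injectivity. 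You collapse both steps into the single statement that $\Phi$ is a strictly increasing bijection of $(0,\infty)$ onto itself with $\Phi(1)=\kappa_\ast$, which simultaneously yields existence, uniqueness, injectivity of $\kappa\mapsto\n(\kappa)$, and the sign comparison $\n(\kappa)<1\iff\kappa<\kappa_\ast$. This is more economical and renders the concavity argument redundant; what it loses is the qualitative picture of $\g$ itself (its concavity and the sign of $\g'$ at the two equilibria), which the paper reuses later, e.g.\ in the expansion \eqref{eq:exp_ncross_explicit}. One small point you should make explicit: after solving $\kappa=\Phi(\n)$ over all of $(0,\infty)\setminus\{1\}$ you must check that the root lies in the domain $(0,\bar\n(\kappa))$ of $\r_\kappa$, i.e.\ that $\p(\n(\kappa))<1+\kappa$; this is immediate since $\kappa=\Phi(\n)$ rearranges to $1+\kappa-\p(\n)=\kappa/\n>0$, and it is the content of the paper's closing observation that $\kappa(\n)>\p(\n)-1$. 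Incidentally, your value $\Phi(1)=\kappa_\ast$ is the correct one; the limit "$-1$" displayed in the paper's proof is a typographical slip.
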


\begin{proof} 
For $\kappa\neq \kappa_\ast$, the function $\g$ is such that 
\begin{equation*}
	\lim_{\n\to 0^+} \g(\n)=-\infty,\qquad \g(1)=0 ,\qquad 
	\g'(1)=1-\frac{\kappa_\ast}{\kappa}\neq 0,\qquad
	\g(\bar{\n})=-\frac{1}{\bar{\n}}<0.
\end{equation*}
Moreover, the derivative
\begin{equation}\label{eq:gprime}
	\g'(\n)=\frac{1}{\n^2}-\frac{\p'(\n)}{\kappa}
\end{equation}
is decreasing in $\n$, hence the function $\g$ is concave.
(Graphs of the function $\g$ for several values of $\kappa$ are depicted in Fig.~\ref{fig:g_plot}, in the case of the pressure  
law \eqref{eq:gammalaw} 
with $\gamma=2$.)
In particular, for $\kappa<\kappa_\ast$, respectively $\kappa>\kappa_\ast$, there exists a unique
value $\n\in(0,1)$, respectively $\n\in(1,\bar{\n})$, such that $\g(\n)=0$.

Conversely, given $\n\in(0,+\infty)\setminus\{1\}$, let  $\kappa(\n)$ be such that $\g(\n)=0$.
The latter identity can be equivalently written as $\n=\r_\kappa(\n)={\kappa}/{\left\{1+\kappa-\p(\n)\right\}}$.
As a consequence, we infer
\begin{equation}
	\kappa(\n)=\frac{\n\left\{\p(\n)-1\right\}}{\n-1}
		\qquad\textrm{for}\quad \n\neq 1\,.
\end{equation}
and thus, since \eqref{eq:hyppressure} holds, 
\begin{equation*}
	\lim_{\n\to 0^+} \kappa(\n)=0,\qquad 
	\lim_{\n\to 1}  \kappa(\n)=-1,\qquad
	\lim_{\n\to +\infty}  \kappa(\n)=+\infty.
\end{equation*}
In addition,  $\n\mapsto \kappa(\n)$ is differentiable with respect to $\n$ for $\n\neq 1$ with derivative
\begin{equation*}
	\kappa'(\n)
	=\frac{\n(\n-1)\p'(\n)+1-\p(\n)}{(\n-1)^2}\,.
\end{equation*}
Then, applying de l'H\^opital rule, we infer
\begin{equation*}
	\lim_{\n\to 1}  \kappa'(\n)
		=\lim_{\n\to 1}\frac{2\p'(\n)+\n\p''(\n)}{2}=\kappa_\ast+\frac12\,\p''(1)>0\,,
\end{equation*}
showing that $\kappa\in C^1(0,+\infty)$.
Moreover, the numerator in the expression for the derivative  $\kappa'(n)$ is positive, 
since it vanishes at $\n=1$ and a further differentiation gives
\begin{equation*}
	\frac{\ud}{\ud\n}\left\{\n(\n-1)\p'(\n)+1-\p(\n)\right\}=(\n-1)\left\{2\p'(\n)+\n\p''(\n)\right\}
\end{equation*}
which is of the same sign as $\n-1$ and so $\n\mapsto \kappa(\n)$ is increasing.

Finally, 
 thanks to the strict positivity of $\p'$,
 $\p(\n)-1$ is of the same sign as $\n-1$ and, since  $\kappa(\n)$ can be rewritten as
\begin{equation*}
	\kappa(\n)=\p(\n)-1+\frac{\p(\n)-1}{\n-1},
\end{equation*}
we deduce that $\kappa(\n)>\p(\n)-1$.
Therefore, $\n<\bar{\n}(\kappa(\n))$ and we conclude that 
there exists a unique $\kappa$ such that $\g(\n)=0$. 
\end{proof}

\begin{figure}[thb] 
   \tikzset{mark size=3.5}   
  \begin{tikzpicture}[thick,scale=0.5, every node/.style={scale=1.3}] 
    \input{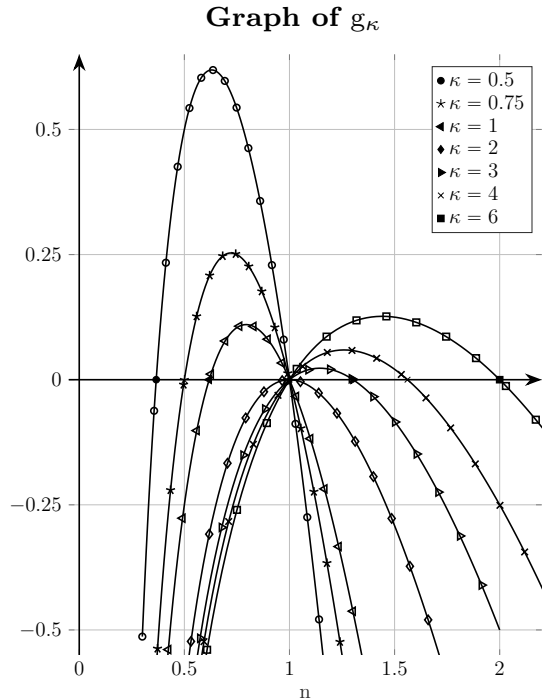}         
  \end{tikzpicture} 
\caption{\footnotesize The graph of the function $\g$ in \eqref{eq:rescaledODE} for the $\gamma$-law \eqref{eq:gammalaw}
with exponent $\gamma=2$.
The markers are the same as in Figures \ref{fig:ngraph} and \ref{fig:orbits}.}
\label{fig:g_plot}
\end{figure}

Let the function $\h$ be defined by
\begin{equation} 
	\h(\n):=\left\{\n \p(\n)\right\}'=\p(\n)+\n\p'(\n)\,.
\end{equation}
In particular, because $\h'=2\p'+\p''>0$, the function $\h$ is strictly increasing together with its inverse $\h^{-1}$.
Then, the following function is well-defined for any $\kappa>0$
\begin{equation}\label{eq:def:nhashtag}
	\n_\#(\kappa):=\h^{-1}(1+\kappa)\in(1,\infty)\,.
\end{equation}
Being $\h(1)=\p(1)+\p'(1)=1+\kappa_\ast$, there holds $\n_\#(\kappa_\ast)=1$.

\begin{lemma}\label{lem:tauhashtag}
Given $\kappa>0$, let $\n_\#=\n_\#(\kappa)$ be defined as in \eqref{eq:def:nhashtag}.
The function 
\begin{equation}\label{eq:def_tauhashtag}
	\tau_\#(\kappa):=\frac{\kappa}{\n_\#^2 \p'(\n_\#)}
\end{equation}
is such that $0<\tau_\#(\kappa)\leq 1$ for all $\kappa>0$ and $\tau_\#(\kappa)=1$ if and only if $\kappa=\kappa_\ast$.\par
Moreover, $\tau_\#=\tau_\#(\kappa)$ tends to $0$ as $\kappa\to 0^+$ and as $\kappa\to +\infty$.
\end{lemma}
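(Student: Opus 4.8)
The plan is to make the whole argument hinge on the relation defining $\n_\#=\n_\#(\kappa)$ in \eqref{eq:def:nhashtag}, namely $\h(\n_\#)=\p(\n_\#)+\n_\#\p'(\n_\#)=1+\kappa$, so that
\begin{equation*}
	\kappa=\p(\n_\#)+\n_\#\p'(\n_\#)-1\,.
\end{equation*}
Since $\kappa>0$, $\n_\#>0$ and $\p'(\n_\#)>0$ (recall $\p',\p''>0$ on $(0,\infty)$, inherited from \eqref{eq:hyppressure}), strict positivity of $\tau_\#$ in \eqref{eq:def_tauhashtag} is immediate; the substance lies in the upper bound, its equality case, and the two limits.

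First, substituting the displayed expression for $\kappa$ into \eqref{eq:def_tauhashtag}, I would observe that $\tau_\#(\kappa)\le 1$ is equivalent to $\phi(\n_\#)\ge 0$, where
\begin{equation*}
	\phi(\n):=\n(\n-1)\p'(\n)+1-\p(\n)
\end{equation*}
is precisely the numerator of the expression for $\kappa'(\n)$ in the proof of Lemma~\ref{lem:ncross}. There it was already established that $\phi(1)=0$ and $\phi'(\n)=(\n-1)\{2\p'(\n)+\n\p''(\n)\}$, which, by $\p',\p''>0$, is negative on $(0,1)$ and positive on $(1,\infty)$; hence $\phi$ has a strict global minimum $\phi(1)=0$, so $\phi\ge 0$ on $(0,\infty)$ with equality only at $\n=1$. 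This yields $0<\tau_\#(\kappa)\le 1$, and $\tau_\#(\kappa)=1$ if and only if $\n_\#=1$; as $\h$ is strictly increasing with $\h(1)=1+\kappa_\ast$, this holds if and only if $1+\kappa=1+\kappa_\ast$, that is, $\kappa=\kappa_\ast$.

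For the two limits I would use that $\kappa\mapsto\n_\#(\kappa)=\h^{-1}(1+\kappa)$ is continuous and strictly increasing, with $\h(0)=0$ and $\h(\n)\ge\n\kappa_\ast\to+\infty$ (using $\p'\ge\p'(1)=\kappa_\ast$ on $[1,\infty)$). As $\kappa\to 0^+$, one has $\n_\#\to\n_0:=\h^{-1}(1)\in(0,\infty)$, a fixed positive value, so $\n_\#^2\p'(\n_\#)\to\n_0^2\p'(\n_0)>0$ stays bounded and bounded away from $0$, whence $\tau_\#(\kappa)=\kappa/\bigl(\n_\#^2\p'(\n_\#)\bigr)\to 0$. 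As $\kappa\to+\infty$, $\n_\#\to+\infty$, and the convexity bound $\p(\n)=\int_0^\n\p'(s)\ud s\le\n\p'(\n)$ (from $\p(0)=0$ and monotonicity of $\p'$) turns the displayed identity for $\kappa$ into $\kappa\le 2\n_\#\p'(\n_\#)$, so that $0<\tau_\#(\kappa)\le 2/\n_\#\to 0$; the squeeze concludes.

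I do not anticipate a real obstacle: once the auxiliary function $\phi$ is singled out the argument is elementary, and the sign analysis of $\phi'$ is already available from Lemma~\ref{lem:ncross}. The only steps needing a sentence of justification are that $\n_\#$ stays bounded away from $0$ as $\kappa\to 0^+$ (a consequence of $\h(0)=0<1$ and the strict monotonicity of $\h$) and the elementary convexity estimate $\p(\n)\le\n\p'(\n)$ invoked in the $\kappa\to+\infty$ regime.
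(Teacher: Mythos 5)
Your proof is correct, and for the central inequality it is in substance the paper's argument seen through the change of variables $\kappa\mapsto\n_\#(\kappa)$: the paper introduces $\f(\kappa):=\n_\#^2\p'(\n_\#)-\kappa$ and shows it is strictly convex with minimum $0$ at $\kappa=\kappa_\ast$ (via $\f'(\kappa)=\n_\#-1$), whereas you observe that $\f(\kappa)=\phi(\n_\#(\kappa))$ with $\phi(\n)=\n(\n-1)\p'(\n)+1-\p(\n)$, which is exactly the numerator of $\kappa'(\n)$ whose nonnegativity was already established in the proof of Lemma~\ref{lem:ncross}. Your route is slightly more economical because it recycles that computation instead of redoing it in the $\kappa$ variable, and it makes the equality case $\tau_\#=1\iff\n_\#=1\iff\kappa=\kappa_\ast$ completely transparent. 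The limit as $\kappa\to0^+$ is handled identically in both arguments. For $\kappa\to+\infty$ you genuinely diverge from the paper: instead of de l'H\^opital applied to $\kappa/\{\n_\#^2\p'(\n_\#)\}$, you use the convexity bound $\p(\n)\le\n\p'(\n)$ to get $\kappa\le2\n_\#\p'(\n_\#)$ and hence $0<\tau_\#\le2/\n_\#\to0$; this is more elementary and avoids differentiating the implicitly defined $\n_\#$. The only justifications you flag as needed — that $\n_\#$ stays bounded away from $0$ as $\kappa\to0^+$ and that $\n_\#\to+\infty$ as $\kappa\to+\infty$ — are indeed the right ones, and both follow from the strict monotonicity of $\h$ together with $\h(0)=0$ and $\h(\n)\ge\kappa_\ast\n$ for $\n\ge1$, as you say.
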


\begin{proof}
To begin, let us observe that $\n_\#(\kappa_\ast)=1$ and $\p'(1)=\kappa_\ast$ so that $\tau_\#(\kappa_\ast)=1$.
The positivity of $\tau_\#$ being obvious, let us show that $\tau_\#\leq 1$ for $\kappa>0$,
with the equality holding only if $\kappa=\kappa_\ast$.
Indeed, the above inequality is equivalent to
\begin{equation}\label{eq:tauhashtag<1}
	\f(\kappa):=\n_\#^2 \p'(\n_\#)-\kappa\geq 0\qquad \forall\,\kappa>0\,.
\end{equation}
Note that $\f(\kappa_\ast)=\p'(1)-\kappa_\ast=0$.
Differentiating with respect to $\kappa$, we infer  
\begin{equation*}
	\f'(\kappa)=\left\{2 \p'(\n_\#)+\n_\# \p''(\n_\#)\right\}\n_\# \n_\#'-1
	=\frac{\left\{2 \p'(\n_\#)+\n_\# \p''(\n_\#)\right\}\n_\#}{\h'(\h^{-1}(1+\kappa))}-1
	=\n_\#-1\,.
\end{equation*}
Differentiating again, since $\n_\#'=1/\h'(\n_\#)>0$, we conclude
that $\f$ is strictly convex, its unique minimum being $0$ at $\kappa=\kappa_\ast$.              
As a consequence, inequality \eqref{eq:tauhashtag<1} holds.

Next, let us observe that $\n_\#(0)=\h^{-1}(1)>\h^{-1}(0)=0$ since $\h(0)=\p(0)=0$ and $\h^{-1}$ is strictly increasing.
Hence,  $\frac{\tau_\#(\kappa)}{\kappa}$ tends to a strictly positive number as $\kappa\to 0^+$ and the limit
of $\tau_\#$ at $\kappa=0$ is identified.

Concerning the behavior at $+\infty$, since $\h(+\infty)=+\infty$, there holds $n_\#(+\infty)=+\infty$,
Then, applying de l'H\^opital rule, we obtain
\begin{equation*}
	\lim_{\kappa\to +\infty} \tau_\#(\kappa)=\lim_{\kappa\to +\infty} \frac{1}{\{\n_\#^2 \p'(\n_\#)\}'}
		=\lim_{\kappa\to +\infty}\frac{1}{n_\#}\ \frac{\h'}{2\p'+\n_\#\p''}
		=\lim_{\kappa\to +\infty} \frac{1}{\n_\#}=0,
\end{equation*}
completing the proof.
\end{proof}

\begin{theorem}\label{th:singlimit_existence}
Given $\kappa>0$ with $\kappa\neq \kappa_\ast$, let
$n_\times=n_\ast \n_\times(\kappa)\neq n_\ast$  be the equilibrium value
  defined thanks to
$\n_\times(\kappa)$ the solution given by Lemma \ref{lem:ncross}.
Then, if $\tau<\tau_\#(\kappa)$, problem \eqref{eq:odeth0}-\eqref{eq:constraint} admits monotone solutions $\y\mapsto n(\y)$ connecting
asymptotically $n_\times$ to $n_\ast$ with monotonicity related to the sign of $u_\ast$.
\end{theorem}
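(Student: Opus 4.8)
The plan is to reduce the profile system \eqref{eq:odeth0}--\eqref{eq:constraint} to a single autonomous scalar ODE and then produce the heteroclinic orbit by an elementary phase-line argument; the only point that requires real work is showing that the hypothesis $\tau<\tau_\#(\kappa)$ is precisely what keeps the disperse density $\rho$ strictly positive along the whole trajectory, so that the ODE never degenerates. First I would record the reduction: for $\theta=0$ and $u_\ast\neq0$ one eliminates $r$ through the constraint, writing $\r=\r_\kappa(\n)$ with $\n=n/n_\ast$, so that \eqref{eq:rescaled} takes the scalar form
\begin{equation*}
	\frac{\ud\n}{\ud\y}=\Psi(\n):=\frac{\kappa\,\r_\kappa(\n)^2}{u_\ast\,\bigl(\r_\kappa(\n)-\tau\n\bigr)\,\p'(\n)}\;\g(\n),
	\qquad \g(\n)=\T\bigl(\n,\r_\kappa(\n)\bigr)=\frac{1}{\r_\kappa(\n)}-\frac1\n,
\end{equation*}
well defined on $\bigl(0,\bar\n(\kappa)\bigr)$ as long as $\r_\kappa(\n)\neq\tau\n$. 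Since $\rho=r-n=r_\ast\bigl(\r_\kappa(\n)-\tau\n\bigr)$, the denominator of $\Psi$ is, up to the positive factor $r_\ast$, exactly the disperse density, so admissibility ($\rho>0$) is the non-vanishing of this denominator.

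The key step will be to prove that $\tau<\tau_\#(\kappa)$ forces $\r_\kappa(\n)>\tau\n$ on all of $\bigl(0,\bar\n(\kappa)\bigr)$. I would analyse $\varphi(\n):=\r_\kappa(\n)/\n$. From $\r_\kappa'(\n)=\r_\kappa(\n)^2\p'(\n)/\kappa$ one checks that $\varphi'(\n)=0$ is equivalent to $\r_\kappa(\n)\,\n\,\p'(\n)=\kappa$, hence to $\p(\n)+\n\p'(\n)=\h(\n)=1+\kappa$; thus $\varphi$ has a unique critical point on $\bigl(0,\bar\n(\kappa)\bigr)$, namely $\n=\n_\#(\kappa)=\h^{-1}(1+\kappa)$ of \eqref{eq:def:nhashtag}, and there the critical relation gives $\varphi(\n_\#)=\kappa/\bigl(\n_\#^2\p'(\n_\#)\bigr)=\tau_\#(\kappa)$ by \eqref{eq:def_tauhashtag}. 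Because $\r_\kappa(0^+)>0$ one has $\varphi(\n)\to+\infty$ as $\n\to0^+$, and because $\r_\kappa(\n)\to+\infty$ as $\n\to\bar\n(\kappa)^-$ with $\bar\n(\kappa)<\infty$ the same holds at the right end; hence the unique interior critical point is the global minimum, $\min_{\n}\varphi(\n)=\tau_\#(\kappa)$. Consequently $\tau<\tau_\#(\kappa)$ is equivalent to $\r_\kappa(\n)-\tau\n>0$ throughout $\bigl(0,\bar\n(\kappa)\bigr)$, so that $\rho(\y)>0$ along any solution.

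It then remains to run the phase-line argument on the compact interval $I=[\n_{\min},\n_{\max}]$ with $\n_{\min}=\min\{1,\n_\times(\kappa)\}$ and $\n_{\max}=\max\{1,\n_\times(\kappa)\}$, where $\n_\times(\kappa)$ is the root supplied by Lemma~\ref{lem:ncross} (so $\n_\times(\kappa)\lessgtr1$ according to $\kappa\lessgtr\kappa_\ast$, and $I\subset\bigl(0,\bar\n(\kappa)\bigr)$). On $I$ the denominator of $\Psi$ is bounded away from $0$ by the previous step together with $\p'>0$ on the positive axis and $\r_\kappa>0$ on $\bigl(0,\bar\n(\kappa)\bigr)$, while the numerator is $C^1$ since $p\in C^2$; hence $\Psi\in C^1(I)$ is locally Lipschitz. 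By Lemma~\ref{lem:ncross}, $\g$ vanishes on $\bigl(0,\bar\n(\kappa)\bigr)$ exactly at $\n=1$ and $\n=\n_\times(\kappa)$, and being concave with two zeros it is strictly positive on the open interval $(\n_{\min},\n_{\max})$; the factor multiplying $\g$ keeps the sign of $u_\ast$. Thus $\Psi$ vanishes on $I$ only at the two endpoints and has constant sign $\operatorname{sgn}(u_\ast)$ in between. A solution issued from an interior point is therefore strictly monotone, cannot leave $I$ by uniqueness, and being monotone and bounded converges as $\y\to\pm\infty$ to zeros of $\Psi$ in $I$, namely $\n_{\min}$ and $\n_{\max}$; it is hence globally defined and heteroclinic. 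Undoing the scaling $n=n_\ast\n$ yields a monotone profile connecting $n_\ast$ and $n_\times=n_\ast\n_\times(\kappa)$, with orientation (which asymptote sits at $-\infty$) fixed by $\operatorname{sgn}(u_\ast)$ and by $\operatorname{sgn}(\n_\times(\kappa)-1)=\operatorname{sgn}(\kappa-\kappa_\ast)$, the full profile being recovered from $r=r_\ast\r_\kappa(\n)$, $\rho=r-n$, $w\equiv w_\ast$.

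The step I expect to be the main obstacle is the second paragraph, namely identifying $\tau_\#(\kappa)$ with the global minimum of $\r_\kappa(\n)/\n$: this is the mechanism by which $\tau<\tau_\#(\kappa)$ prevents the profile ODE from becoming singular (and explains why a small-$\tau$ restriction is natural). Once the denominator of $\Psi$ is under control, the remaining monotone-orbit construction is routine scalar dynamics.
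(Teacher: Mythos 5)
Your proposal is correct and follows essentially the same route as the paper: reduce to a scalar ODE in $\n$, use the concavity of $\g$ and its two zeros $1$ and $\n_\times$ to run a phase-line argument, and show that the hypothesis $\tau<\tau_\#(\kappa)$ is exactly the condition keeping $\rho=r_\ast(\r_\kappa(\n)-\tau\n)$ positive so that the ODE never degenerates. The only (minor) variation is in how that last point is established: the paper identifies $(\n_\#,\tau_\#)$ as the tangency point of the line $\r=\tau_\#\n$ with the convex curve $\r_\kappa$, whereas you characterize $\tau_\#(\kappa)$ as the global minimum of $\n\mapsto\r_\kappa(\n)/\n$ on $\bigl(0,\bar\n(\kappa)\bigr)$ — an equivalent and slightly more self-contained computation, since it needs only the monotonicity of $\h$ rather than the convexity of $\r_\kappa$.
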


\begin{rmk}The 
definition of the parameters has practical consequences, for instance for numerical purposes.
Choosing $u_\ast$, $\tau$ and $\kappa$ leads to inverting $n\mapsto \frac{n}{p(n)}$ in order to retrieve $n_\ast$, which might require additional 
assumptions on the pressure law, hopefully satisfied by the $\gamma$-law.
\end{rmk}

\begin{figure}[bht]
  \begin{tikzpicture}[thick,scale=0.6, every node/.style={scale=1.3}] 
%
%

\begin{axis}[
width=6in,
height=3in,
at={(0.822in,0.846in)},
xmin=0,
xmax=12,
ymin=0, 
ymax=1.1,
scale only axis,
area style, 
hide axis
]
    \addplot[name path=f,domain=0:11.5,black] table[row sep=crcr]{%
0	0\\
0.121212121212121	0.265256638446294\\
0.242424242424242	0.454802725126372\\
0.363636363636364	0.593295878967653\\
0.484848484848485	0.696201122173728\\
0.606060606060606	0.773615295185808\\
0.727272727272727	0.832353508623751\\
0.848484848484849	0.877146500790414\\
0.96969696969697	0.911357558260551\\
1.09090909090909	0.937427567679419\\
1.21212121212121	0.957160036312575\\
1.33333333333333	0.97190864488087\\
1.45454545454545	0.982703641586785\\
1.57575757575758	0.990338838818667\\
1.6969696969697	0.995432631608339\\
1.81818181818182	0.998471524977282\\
1.93939393939394	0.99984166131695\\
2.06060606060606	0.9998519740466\\
2.18181818181818	0.998751407096802\\
2.3030303030303	0.996741869382683\\
2.42424242424242	0.993988084105178\\
2.54545454545455	0.990625150316092\\
2.66666666666667	0.986764400436317\\
2.78787878787879	0.982497975570433\\
2.90909090909091	0.977902426920707\\
3.03030303030303	0.973041570977147\\
3.15151515151515	0.967968768255082\\
3.27272727272727	0.962728753323514\\
3.39393939393939	0.957359113060527\\
3.51515151515152	0.951891487280619\\
3.63636363636364	0.946352548870404\\
3.75757575757576	0.940764807772366\\
3.87878787878788	0.935147273453992\\
4	0.92951600308978\\
4.12121212121212	0.923884556985664\\
4.24242424242424	0.918264378365328\\
4.36363636363636	0.91266511120341\\
4.48484848484848	0.907094867100344\\
4.60606060606061	0.901560450074476\\
4.72727272727273	0.896067546469019\\
4.84848484848485	0.890620885835964\\
4.96969696969697	0.885224377591058\\
5.09090909090909	0.879881227376017\\
5.21212121212121	0.874594036371858\\
5.33333333333333	0.869364886246339\\
5.45454545454545	0.864195411962139\\
5.57575757575758	0.859086864299812\\
5.6969696969697	0.854040163644103\\
5.81818181818182	0.849055946330982\\
5.93939393939394	0.844134604645427\\
6.06060606060606	0.839276321388305\\
6.18181818181818	0.83448109978812\\
6.3030303030303	0.829748789414591\\
6.42424242424242	0.825079108651809\\
6.54545454545455	0.820471664205539\\
6.66666666666667	0.815925968049423\\
6.78787878787879	0.811441452156\\
6.90909090909091	0.807017481308783\\
7.03030303030303	0.802653364249631\\
7.15151515151515	0.798348363379976\\
7.27272727272727	0.794101703204169\\
7.39393939393939	0.789912577677376\\
7.51515151515152	0.785780156598426\\
7.63636363636364	0.781703591169129\\
7.75757575757576	0.777682018825447\\
7.87878787878788	0.773714567432007\\
8	0.769800358919501\\
8.12121212121212	0.765938512434249\\
8.24242424242424	0.762128147060308\\
8.36363636363636	0.758368384166832\\
8.48484848484848	0.75465834942676\\
8.60606060606061	0.750997174547114\\
8.72727272727273	0.747383998746219\\
8.84848484848485	0.743817970008769\\
8.96969696969697	0.740298246145907\\
9.09090909090909	0.736823995684149\\
9.21212121212121	0.73339439860412\\
9.33333333333333	0.730008646947537\\
9.45454545454546	0.726665945308668\\
9.57575757575758	0.723365511224573\\
9.6969696969697	0.720106575476705\\
9.81818181818182	0.71688838231501\\
9.93939393939394	0.713710189614313\\
10.0606060606061	0.71057126897164\\
10.1818181818182	0.707470905752134\\
10.3030303030303	0.704408399090286\\
10.4242424242424	0.701383061852468\\
10.5454545454545	0.698394220566006\\
10.6666666666667	0.695441215319465\\
10.7878787878788	0.69252339963825\\
10.9090909090909	0.689640140339141\\
11.030303030303	0.686790817366978\\
11.1515151515152	0.683974823616332\\
11.2727272727273	0.681191564740639\\
11.3939393939394	0.678440458951009\\
11.5151515151515	0.675720936806662\\
};

    \path[name path=axis1] (axis cs:0,0) -- (axis cs:2,0);
    \path[name path=axis2] (axis cs:2,0) -- (axis cs:11.5,0);

    \addplot [
        thick,
	color=gray,
        fill=gray, 
        fill opacity=0.05
    ]
    fill between[
        of=f and axis1,
        soft clip={domain=0:2},
    ];
    \addplot [
        thick,
	color=gray,
        fill=gray, 
        fill opacity=0.15
    ]
    fill between[
        of=f and axis2,
        soft clip={domain=2:11.5},
    ];
\end{axis}

\begin{axis}[%
width=6in,
height=3in,
at={(0.822in,0.846in)},
scale only axis,
area style,
stack plots=y,
clip=false,
xmin=0,
xmax=12,
xtick={\empty},
xlabel style={font=\color{white!15!black}},
xlabel={$\kappa$},
ymin=0,
ymax=1.1,
ytick={0, 1},
ylabel style={font=\color{white!15!black}},
ylabel={$\tau$},
axis lines = left,
axis line style={-{Stealth[scale=2]}},
]
\addplot [color=black, dotted, line width=1.5pt]
  table[row sep=crcr]{%
2	0\\
2	1\\
};
\node[right, align=left]
at (axis cs:1.8,-0.04) {$\kappa^*$};
\node[right, align=left]
at (axis cs:0.09,0.1) {$n_\times<n^*$};
\node[right, align=left]
at (axis cs:5,0.5) {$n_\times>n^*$};
\end{axis}

 \node at (8, 11)  (c)     {\bf\tiny Graph of $\tau_\#$};

      
    \end{tikzpicture}%
\caption{\footnotesize The graph of the function $\tau_\#$ in the case of the $\gamma$-law \eqref{eq:gammalaw} with $\gamma=2$.
Small shocks are concentrated in a neighborhood of $\kappa=\kappa_\ast=2$.}
\label{fig:tau_plot}             
\end{figure}
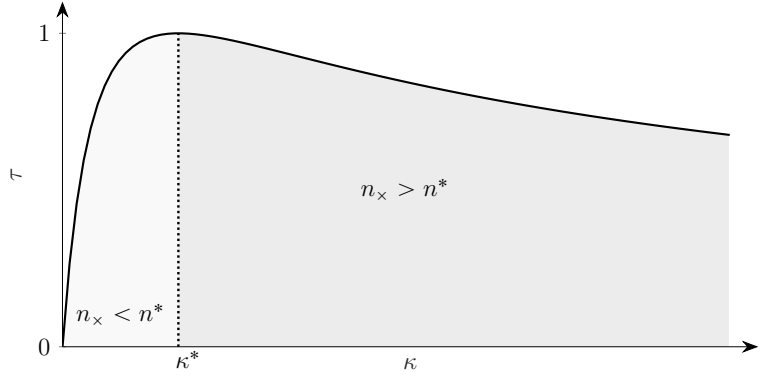  
  
\begin{proof}
For $\r-\tau\n\neq 0$ and introducing the new variable $\z$ such that
\begin{equation}\label{eq:def_abpsi}
	\frac{\ud}{\ud\z}=\frac{u_\ast\left(\r-\tau\n\right) \p'(\n)}{\kappa\r^2}\frac{\ud}{\ud \y}\,,
\end{equation}
problem \eqref{eq:rescaled} becomes
\begin{equation}\label{eq:rescaledODE}
	\frac{\ud \n}{\ud \z} = \g(\n).
\end{equation}
where $\g$ is defined as in Lemma  \ref{lem:ncross}.
A straightforward argument, based on the analysis of the sign of function $\g$, shows the existence
of the heteroclinic connection between $1$ and $\n_\times$ for \eqref{eq:rescaledODE} for $\kappa\neq \kappa_\ast$,
whenever $\r-\tau\n>0$.

The threshold level $\tau_\#$ appears as a consequence of the constraint $\r>\tau \n$,
indicating that the curve $(\n,\r_\kappa)$ lies above $(\n,\tau \n)$. 
Differentiating $\r_\kappa$ with respect to $\n$, we infer
\begin{equation*}
	\frac{\ud\r_\kappa}{\ud\n}=\frac{\kappa \p'(\n)}{\left\{1+\kappa-\p(\n)\right\}^2}\,,
\end{equation*}
which is positive and increasing for the properites of $\p$. 
In particular, $\r_\kappa$ is convex in $\left(0,\bar{\n}(\kappa)\right)$ where  $\bar{\n}(\kappa)$ has been introduced right after \eqref{eq:rescaled}.

Next let us look for the pair $(\n_\#,\tau_\#)$ such that the tangent to the graph of the functions $\r_\kappa$
is given by the straight line $\r=\tau_\# \n$.
This amounts to searching the solutions of 
\begin{equation*}
	\r_\kappa(\n_\#)=\frac{\kappa}{1+\kappa-\p(\n_\#)}=\tau_\# \n_\#\
	\quad\textrm{and}\quad
	\r_\kappa'(\n_\#)=\frac{\kappa \p'(\n_\#)}{\left\{1+\kappa-\p(\n_\#)\right\}^2}=\tau_\#\,.
\end{equation*}
Replacing the first identity into the second and simplifying, we get 
 ${\p(\n_\#)+\n_\# \p'(\n_\#)=1+\kappa}$. 
Then, we immediately recognize that $\n_\#=\h^{-1}(1+\kappa)$ and $\tau_\#=\r_\kappa(\n_\#)/\n_\#=\r_\kappa'(\n_\#)$
which corresponds to the value defined in \eqref{eq:def_tauhashtag}.
Note also that $\g(n_\#)=(1-\tau_\#)/\n_\#\geq 0$.
Summarizing, for $\tau\in(0,\tau_\#)$ the constraint $\r_\kappa>\tau\n$ is always
satisfied and the change of variables is legit.

Conversely, for $\tau\in(\tau_\#,1)$ there exist two values
$\n_\ell, \n_r\in (0,\bar{\n})$ with $\n_\ell<\n_r$ and $\r_\kappa(\n_{\ell,r})=\tau \n_{\ell,r}$, such that
the condition $\r_\kappa>\tau\n$ holds if and only if $\n\in(0,\n_\ell)$ or $\n\in(\n_r,\bar{\n})$.
In addition, for $\tau>\tau_\#$, we have
\begin{equation*}
	\g(\n_{\ell,r})=\frac{1}{\r_\kappa(\n_{\ell,r})}-\frac{1}{\n_{\ell,r}}=\frac{1+\kappa-\p(\n_{\ell,r})}{\kappa}-\frac{1}{\n_{\ell,r}}
		=\frac{1-\tau}{\tau \n_{\ell,r}}>0\,,
\end{equation*}
so that for $\kappa<\kappa_\ast$, there holds $0<\n_\times<\n_{\ell}<\n_{r}<1<\bar{\n}$.
In particular, for $\tau>\tau_\#$ and $\kappa<\kappa_\ast$,  the function $\varphi(\n):=\r_\kappa(\n)-\tau \n$
is negative in the interval $(\n_\ell,\n_r)\subset(\n_\times,1)$.
Similarly, for $\kappa>\kappa_\ast$, $\varphi$ is negative in ${(\n_\ell,\n_r)\subset(1,\n_\times)}$.
In both cases, the change of variables \eqref{eq:def_abpsi} is not applicable and existence of the connection is precluded
since the phyisical requirement $\rho>0$ is violated.
\end{proof}

\begin{rmk}\rm
Figure \ref{fig:ngraph} shows the profiles $\n$ (respectively, $\r$) connecting $1$ to $\n_\times$
(resp. $1$ to $\r_\kappa(\n_\times)$) associated to several values of $\kappa$ for the choice $\p(\n)=\n^2$,
illustrating the increasing character of the equilibrium map $\n_\times=\n_\times(\kappa)$.
This point is emphasized in Figure \ref{fig:orbits} in the phase portrait corresponding to the same values of $\kappa$,
showing that the orbits are convex.
Also, note that $\n_\times$ and $\r_\kappa(\n_\times)$ do not depend on $\tau$, but the profiles $n$ and $r$ do,
through $r_*=n_*/\tau$.
The condition $r-n=r_*(\r-\tau \n)>0$, with $\tau<\tau_\#$, shows as $\n\mapsto \tau_\#\n$ is tangent to the orbit
at the point $(\n_\#,\r_\kappa(\n_\#))$.
\end{rmk}

 \begin{figure}
   \tikzset{mark size=3.5}  
 \begin{tikzpicture}[thick,scale=0.5, every node/.style={scale=1.0}]
    \input{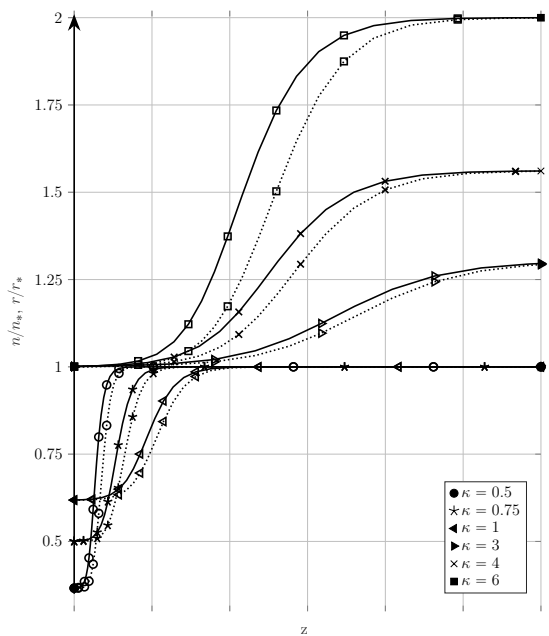}         
  \end{tikzpicture} 
\caption{\footnotesize Graphs of $n/n_\ast$ (plain) and
  $r/r_\ast=\r_\kappa(n/n_\ast)$ (dotted) where $n$ solves problem \eqref{eq:odeth0}-\eqref{eq:constraint}
 for several values of $\kappa$ such that $\tau<\tau_\#$.
The pressure law $p$ is the $\gamma$-law \eqref{eq:gammalaw} with exponent $\gamma=2$.} 
\label{fig:ngraph}
\end{figure}
                                  
\begin{figure}
  \tikzset{mark size=3.5}  
  \begin{tikzpicture}[thick,scale=0.5, every node/.style={scale=1.3}] 
    \input{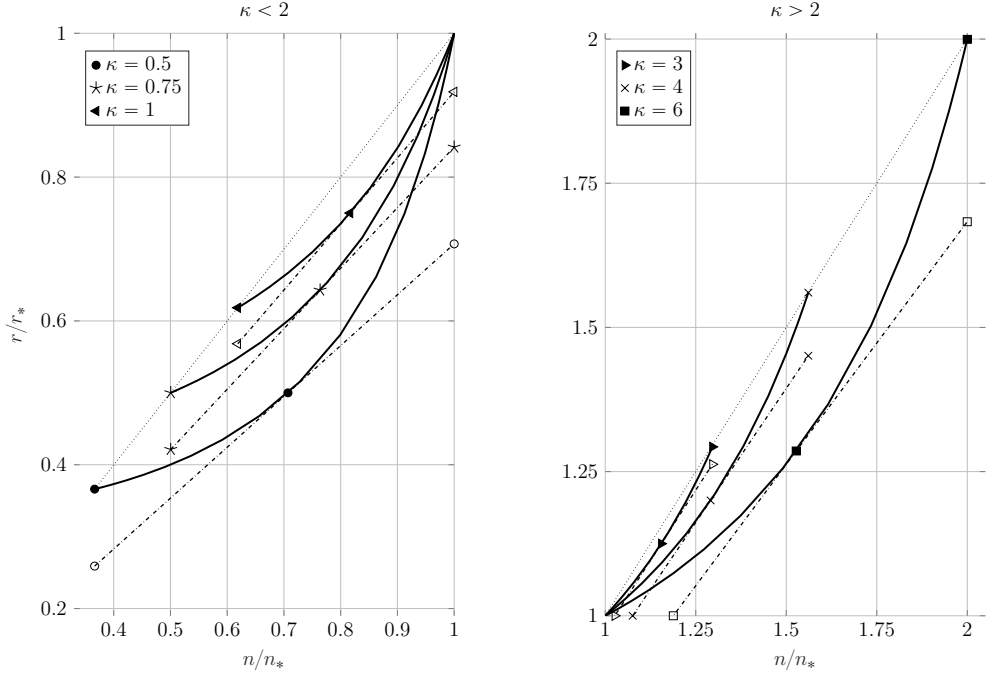}         
  \end{tikzpicture}  
\caption{\footnotesize Orbits connecting $\left(\n_\times,\r_\kappa(\n_\times)\right)$ to $(1,1)$
for several values of $\kappa$ for the $\gamma$-law \eqref{eq:gammalaw} with exponent $\gamma=2$.
The straight line $n\mapsto \tau_\#(\kappa) n$ is plotted for each value of $\kappa$, and the tangent point
with the corresponding orbit is indicated.
The markers are the same as in Figure \ref{fig:ngraph}.}
\label{fig:orbits}
\end{figure}  
 
Let us also observe that, since $\p(1)=1$, there holds $\r_\kappa(1)-\tau=1-\tau>0$.
Hence, small shocks are always admissible also in the case of zero-temperature.

\begin{exo}\rm     
For the sake of concreteness, let us again consider the pressure given by the $\gamma$-law \eqref{eq:gammalaw}.
Incidentally, let us note that $\kappa_\ast=\gamma$ for any positive constant $C$.
Then, most auxiliary functions can be determined giving the explicit expressions
\begin{equation*}
	\p(\n)=\n^\gamma,\qquad \h(\n)=(1+\gamma)\n^\gamma,\qquad
        \h^{-1}(\r)=\left(\frac{\r}{1+\gamma}\right)^{1/\gamma}.  
\end{equation*}           
Moreover, there holds
\begin{equation*}
	\n_\#(\kappa)=\left(\frac{1+\kappa}{1+\gamma}\right)^{1/\gamma}
	\quad\textrm{and}\quad
	\tau_\#(\kappa)=\frac{(1+\gamma)^{1+1/\gamma}}{\gamma}\ \frac{\kappa}{(1+\kappa)^{1+1/\gamma}}
\end{equation*}
In the special case $\gamma=2$, the function $\g$ is a rational function whose factorization is
\begin{equation*}          
	\begin{aligned} 
	\g(\n)&=\frac{1+\kappa-\n^2}{\kappa}-\frac{1}{\n}=-\frac{\n^3-(1+\kappa)\n+\kappa}{\kappa \n}=-\frac{1}{\n}(\n+\n_-)(\n-1)(\n-\n_\times)\,,
	\end{aligned}
\end{equation*} 
where $\n_-$ and $\n_\times$ are given by
\begin{equation*}
	\n_-:=\tfrac{1}{2}\left\{(1+4\kappa)^{1/2}+ 1\right\},\qquad
	\n_\times:=\tfrac{1}{2}\left\{(1+4\kappa)^{1/2}- 1\right\}.
\end{equation*}
Corresponding graphical representations of the function $\varphi$ (defined at the very end of proof of Theorem \ref{th:singlimit_existence}) 
for different choices of $\tau$ are given in Figure~\ref{fig:g_plot_bis}.
Here, the limiting value $\tau_{\#}$ is equal to $1$ at $\kappa=\gamma=2$ and is explicitly represented to show tangency of the graph
with the horizontal axis. 
Above this $\kappa$-dependent threshold value, the still existing heteroclinic connection from $\n_\times$ and $1$
(corresponding to the connection from $n_\times$ and $n_\ast$) is not physically admissible since the carrier phase $\rho$
is negative in a neighborhood of both asymptotic states. 
\begin{figure}
  \tikzset{mark size=3.5}  
  \begin{tikzpicture}[thick,scale=0.5, every node/.style={scale=1.3}] 
    \input{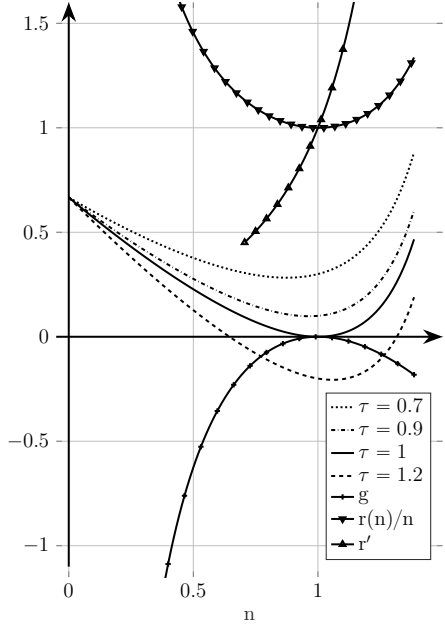}          
  \end{tikzpicture} 
\caption{\footnotesize Graphs of functions $\varphi(\n)=\r_\kappa(\n)-\tau \n$ in the case of $\gamma$-law \eqref{eq:gammalaw} 
with $\gamma=\kappa=2$ and various choices of $\tau$.
The graph of $\g$ (with opposite convexity) has been superposed for comparison,
as well as the maps $\n\mapsto \r_\kappa(\n)/\n$ and $\n\mapsto\r'(\n)$ that intersect at $(\n_\#,\tau_\#)$.}
\label{fig:g_plot_bis}
\end{figure}     
\end{exo} 

\subsection{Further scrutiny for positive temperature}
        
Next, we focus on the case $\theta>0$ with the intention of grasping information from the singular limit behavior $\theta=0$.
System \eqref{eq:ode_profile} can be equivalently written as
\begin{equation}\label{eq:postemp} 
	\left\{\begin{aligned}
	&\frac{r^2}{\rho\, p'+\theta n} \frac{\ud n}{\ud\y} =\frac{w_\ast}{n}\left( \frac{n}{r}-\frac{n_\ast}{r_\ast} \right)
		 -\frac{w_\ast n}{\rho}\left\{\frac{1}{r}-\frac{1}{r_\ast}+\frac{p-p_\ast+\theta(\rho-\rho_\ast)}{w_\ast^2}\right\}\,,\\
	&\theta\frac{\ud r}{\ud\y} =-\frac{w_\ast r^2}{\rho}\left\{\frac{1}{r}-\frac{1}{r_\ast}+\frac{p-p_\ast+\theta(\rho-\rho_\ast)}{w_\ast^2}\right\}\,.
	\end{aligned}\right.
\end{equation}
where $w_\ast=r_\ast u_\ast$.
Next, with same notation as before for $\n$, $\r$, $\tau$, $\kappa$, $\p$ (see \eqref{eq:variousdef:1}-\eqref{eq:variousdef}) and observing that $\rho= r_\ast(\r-\tau \n)$, 
we set $\T$ as in \eqref{eq:rescaled} and $\mathscr{B}_\eps:=\mathscr{B}^0+\eps \mathscr{B}^1$ where 
\begin{equation*}
	\mathscr{B}^0(\n,\r):=\frac{1+\kappa-\p(\n)}{\kappa}-\frac{1}{\r}=\g(\n)-\T(\n,\r),\qquad 
        \mathscr{B}^1(\n,\r):=\frac{1-\tau-(\r-\tau\n)}{\kappa}\,.
\end{equation*}
and $\eps:={\kappa\theta}/{u_\ast^2}={r_\ast\theta}/{p_\ast}$.
Then, the renormalized version of system \eqref{eq:postemp} is
\begin{equation*}
	\left\{\begin{aligned}
	u_{\ast} \frac{\ud \n}{\ud\y} &= \frac{\kappa \r^2}{(\r-\tau\n) \p'(\n)+\varepsilon\tau^2\n}\left\{\T(\n,\r)+\frac{\tau \n}{\r-\tau \n}\mathscr{B}_\eps(\n,\r)\right\}\,,\\
	\eps u_\ast\frac{\ud \r}{\ud\y} &= \frac{\kappa \r^2}{\r-\tau
          \n}\mathscr{B}_\eps(\n,\r)\,, 
	\end{aligned}\right.
\end{equation*}
Note that $\mathscr{B}_\eps$, varying linearly with respect to $\varepsilon$, depends also on $\p$ (through $\mathscr{B}_0$),
on $\tau$ (through $\mathscr{B}_1$) and on $\kappa$ (through both $\mathscr{B}^0$ and $\mathscr{B}^1$).
In addition, we remark that the parameter $\eps$ is small also in cases where $\theta$ is of order 1
and  $\kappa/u_\ast^2$ is small.

Introducing the new variable $\z$ such that
\begin{equation}  
	\frac{\ud}{\ud\z}=\frac{u_\ast(\r-\tau\n) \p'(\n)+\eps \tau^2\n}{\kappa\r^2}\frac{\ud}{\ud \y}\,,
\end{equation}
we arrive at the final expression
\begin{equation}\label{eq:rescaled_pos_temp}
	\left\{\begin{aligned}
	\frac{\ud \n}{\ud \z} &= \T(\n,\r)+\frac{\tau \n}{\r-\tau \n}\,\mathscr{B}_\eps(\n,\r)\,,\\
	\eps \frac{\ud \r}{\ud \z} &= \left(\p'(\n)+\frac{\eps\tau^2\n}{\r-\tau\n}\right) \mathscr{B}_\eps(\n,\r)\,.
	\end{aligned}\right.
\end{equation}
Preliminarily, observe that, since $\p(1)=1$ and thus $\mathscr B_\eps(1,1)=0$, the pair $(\n,\r)=(1,1)$
defines an equilibrium solution  for \eqref{eq:rescaled_pos_temp} for any $\varepsilon$, $\tau$ and $\kappa$.

\begin{prop}
 Let $\kappa>0$ and $\tau\in(0,1)$. Then, for any $\eps>0$ there exists a unique equilibrium point
 $(n_\times^\eps,r_\times^\eps)\neq(n_\ast,r_\ast)$ of system
 \eqref{eq:postemp}. Denoting $\n_\times^\eps=n_\times^\eps/n_\ast$ and
 ${\r_\times^\eps=r_\times^\eps/r_\ast}$, we have
 $\mathscr{T}(\n_\times^\eps,\r_\times^\eps)=\mathscr B_\eps(\n_\times^\eps,\r_\times^\eps)=0$.
 Moreover, the two coefficients $\n_{\times,0}$ and $\n_{\times,1}$ of the first
 order Taylor expansion of $\n_\times^\eps$ at $\eps=0$, 
viz. $\n_\times^\eps=\n_{\times,0}+\eps \n_{\times,1}+\mathscr{O}(\eps^2)$, are
\begin{equation}\label{eq:exp_ncross_explicit}
	\n_{\times,0}=\n_\times\qquad\textrm{and}\qquad
	\n_{\times,1}=\dfrac{1-\tau}{\kappa}\ \dfrac{\n_\times-1}{\g'(\n_\times)}<0\,,
\end{equation}
where $n_\times$ is the equilibrium of system \eqref{eq:rescaled} (as described in Lemma \ref{lem:ncross}).
\end{prop}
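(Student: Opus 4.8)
The plan is to collapse the $2\times 2$ equilibrium problem for \eqref{eq:postemp} to a single scalar equation in $\n$ and then read off everything from the concavity of $\g$ established in Lemma~\ref{lem:ncross} together with the implicit function theorem. A point is an equilibrium of \eqref{eq:postemp} iff both right‑hand sides vanish. Passing to the rescaled variables (using $\rho=r_\ast(\r-\tau\n)$, $p=p_\ast\p(\n)$ and the definitions of $\tau,\kappa,\eps$), a short computation shows that the bracket $\{\cdots\}$ common to both equations equals $-\mathscr B_\eps(\n,\r)/r_\ast$; since the prefactor $-w_\ast r^2/\rho$ of that bracket in the $r$‑equation is nonzero ($w_\ast=r_\ast u_\ast\neq 0$ because $|u_\ast|=\sqrt{p_\ast\tau\kappa/n_\ast}>0$, and $r,\rho>0$), every equilibrium satisfies $\mathscr B_\eps(\n,\r)=0$; the $n$‑equation then reduces to $\tfrac{w_\ast}{n}\bigl(\tfrac{n}{r}-\tfrac{n_\ast}{r_\ast}\bigr)=\tfrac{w_\ast\tau}{n}\bigl(\tfrac{\n}{\r}-1\bigr)=0$, which forces $\r=\n$. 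Conversely, using the identity $\mathscr B^0(\n,\r)=\g(\n)-\T(\n,\r)$ recorded right after \eqref{eq:rescaled_pos_temp} together with $\mathscr B^1(\n,\n)=(1-\tau)(1-\n)/\kappa$, a pair $(\n,\n)$ is an equilibrium iff
\begin{equation*}
	\Phi_\eps(\n):=\mathscr B_\eps(\n,\n)=\g(\n)+\eps\,\frac{(1-\tau)(1-\n)}{\kappa}=0 .
\end{equation*}
Thus every equilibrium has the form $n=n_\ast\n$, $r=r_\ast\n$ with $\mathscr T(\n,\r)=\mathscr B_\eps(\n,\r)=0$ — the second assertion of the Proposition — and it remains to study the nontrivial zero of $\Phi_\eps$.

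\emph{Existence and uniqueness.} The function $\g$ is strictly concave (by \eqref{eq:gprime}, $\g''=-2\n^{-3}-\p''/\kappa<0$), and adding an affine term leaves $\Phi_\eps$ strictly concave. We have $\Phi_\eps(1)=\g(1)=0$, $\Phi_\eps\to-\infty$ both as $\n\to 0^+$ and as $\n\to+\infty$, and $\Phi_\eps<0$ on $[\bar{\n}(\kappa),+\infty)$ (there $\g<0$ and $1-\n<0$, since $\bar{\n}(\kappa)=\p^{-1}(1+\kappa)>\p^{-1}(1)=1$). A strictly concave function with these features has superlevel set $\{\Phi_\eps\ge 0\}$ equal to a compact interval containing $1$, and its zeros are exactly the endpoints of that interval; since $\Phi_\eps'(1)=\bigl(\kappa-\kappa_\ast-\eps(1-\tau)\bigr)/\kappa$, which is close to $\g'(1)=1-\kappa_\ast/\kappa\neq 0$ for $\eps$ small (the regime relevant for the expansion), there are exactly two zeros: $\n=1$ and a unique $\n_\times^\eps\neq 1$, which lies in $(0,1)$ if $\kappa<\kappa_\ast+\eps(1-\tau)$ and in $(1,\bar{\n}(\kappa))$ otherwise. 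Being positive, $\n_\times^\eps$ yields the unique equilibrium $(n_\times^\eps,r_\times^\eps)=(n_\ast\n_\times^\eps,r_\ast\n_\times^\eps)\neq(n_\ast,r_\ast)$, and it sits in the physical region because $\rho=r_\ast\n_\times^\eps(1-\tau)>0$.

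\emph{Expansion and sign.} I would apply the implicit function theorem to $F(\n,\eps):=\g(\n)+\eps(1-\tau)(1-\n)/\kappa$ at $(\n_\times,0)$: one has $F(\n_\times,0)=\g(\n_\times)=0$ by Lemma~\ref{lem:ncross}, and $\partial_\n F(\n_\times,0)=\g'(\n_\times)\neq 0$ because the two zeros $1$ and $\n_\times$ of the strictly concave $\g$ straddle its unique critical point, so $\g'$ vanishes at neither. This produces a $C^1$ branch $\eps\mapsto\n_\times^\eps$, which for $\eps$ small coincides with the equilibrium of the previous step by uniqueness, with $\n_{\times,0}=\n_\times$ and
\begin{equation*}
	\n_{\times,1}=-\frac{\partial_\eps F(\n_\times,0)}{\partial_\n F(\n_\times,0)}=\frac{1-\tau}{\kappa}\,\frac{\n_\times-1}{\g'(\n_\times)} .
\end{equation*}
The same straddling observation settles the sign: if $\kappa<\kappa_\ast$ then $\n_\times<1$ lies to the left of the critical point, so $\g'(\n_\times)>0$ while $\n_\times-1<0$; if $\kappa>\kappa_\ast$ then $\n_\times>1$ lies to the right, so $\g'(\n_\times)<0$ while $\n_\times-1>0$; in both cases $(\n_\times-1)/\g'(\n_\times)<0$, hence $\n_{\times,1}<0$ because $(1-\tau)/\kappa>0$.

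The one genuinely delicate point is the reduction step: verifying that the bracket in the $r$‑equation of \eqref{eq:postemp} is a nonvanishing multiple of $\mathscr B_\eps$ and that $\mathscr B^0$ restricted to the diagonal $\r=\n$ collapses to $\g$. Once this is in place, existence, uniqueness and the first‑order expansion are immediate consequences of the strict concavity of $\g$ already available from Lemma~\ref{lem:ncross}. (The degenerate value $\eps=(\kappa-\kappa_\ast)/(1-\tau)$, when positive, is precisely where $\n_\times^\eps$ passes through $1$; away from it — and in particular for all $\eps$ small, as needed for the Taylor statement — the nontrivial equilibrium exists and is unique.)
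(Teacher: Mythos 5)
Your proposal is correct and follows the same route as the paper: reduce the equilibrium conditions to $\r=\n$ together with the scalar equation $\g(\n)=\eps(1-\tau)(\n-1)/\kappa$ (the paper's system \eqref{eq:equilibria}), then expand in $\eps$ and read the sign off the strict concavity of $\g$. You do, however, supply two things the paper's proof only asserts or omits: the explicit verification that the bracket in \eqref{eq:postemp} is a nonvanishing multiple of $\mathscr B_\eps$, and an actual existence/uniqueness argument via concavity of $\Phi_\eps$ — the paper jumps straight from \eqref{eq:equilibria} to the Taylor expansion. Your parenthetical about the degenerate value $\eps=(\kappa-\kappa_\ast)/(1-\tau)$ is a genuine and worthwhile observation: when $\kappa>\kappa_\ast$ this value is positive, $\n=1$ becomes a double root of the concave $\Phi_\eps$, and the nontrivial equilibrium disappears, so the clause ``for any $\eps>0$'' in the statement should be read as ``for $\eps$ small enough''; your sign argument for $\n_{\times,1}$ (the two roots of $\g$ straddle its critical point) is also cleaner than the identity displayed in the paper's proof, which implicitly treats $\g'(1)$ as zero.
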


\begin{proof}
The pair $(\n_\times^\eps,\r_\times^\eps)$ solves $\T(\n_\times^\eps,\r_\times^\eps)=\mathscr{B}_\eps(\n_\times^\eps,\r_\times^\eps)=0$
which is equivalent to
\begin{equation}\label{eq:equilibria}
  \left\{\begin{aligned}
      \r_\times^\eps&=\n_\times^\eps,\\
      \g(\n_\times^\eps)&=\frac{\eps(1-\tau)}{\kappa}(\n_\times^\eps-1)\,,
    \end{aligned}\right.
\end{equation}
referring to the notation in Lemma \ref{lem:ncross}. 
Since $\g(\n_\times)=0$, the zero-th order $\n_{\times,0}$ in the expansion with respect to $\eps$
of the solution $\n_\times^\eps$ coincides with $\n_\times$.
Moreover, the first order coefficient $\n_{\times,1}$ can be obtained from \eqref{eq:equilibria}
by substitution of the expansion and cancellation of the common coefficient $\eps$, that is
\begin{equation}\label{eq:exp_ncross}
	\n_{\times,1}\g'(\n_\times)=\dfrac{1-\tau}{\kappa}(\n_\times-1)\,.
\end{equation}
which gives the desired equality.
Note that $\g'(\n_\times)$ cannot vanish simultaneously with $\g(\n_\times)$
 since $\g'$ is strictly decreasing --see \eqref{eq:gprime}-- and $\g(1)=\g(\n_\times)=0$.

Finally, $\g'$ being decreasing yields
\begin{equation*}
	\frac{\g'(\n_\times)}{\n_\times-1}=\frac{\g'(\n_\times)-\g'(1)}{\n_\times-1}<0,
\end{equation*}
and thus $\n_{\times,1}$ is negative. 
\end{proof}

\begin{figure}
   \tikzset{mark size=3.5}  
  \begin{tikzpicture}[thick,scale=0.5, every node/.style={scale=1.3}] 
    \input{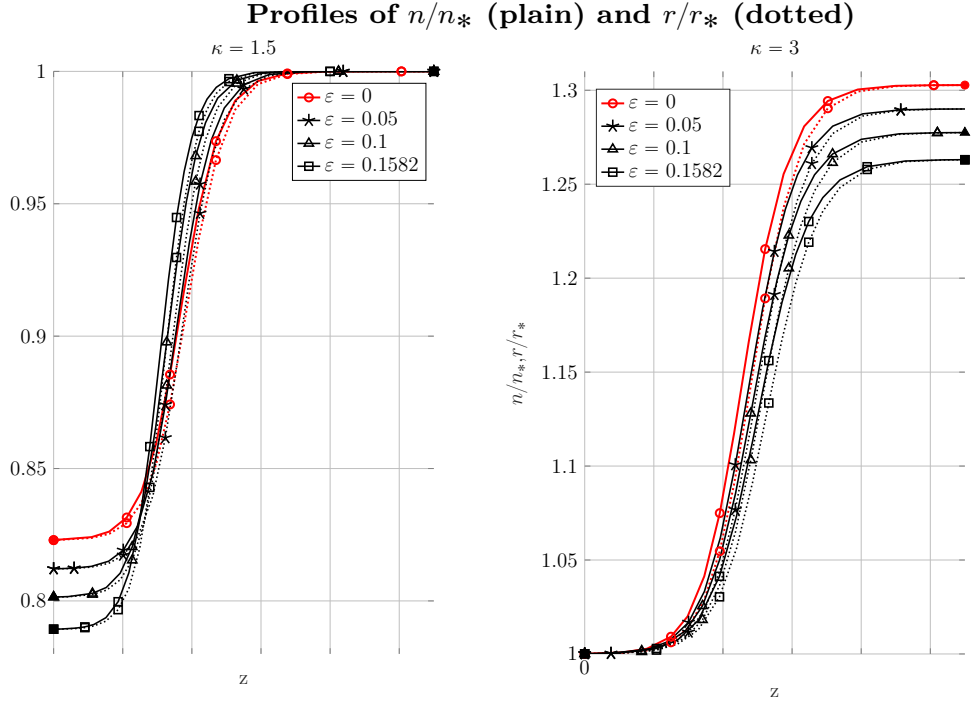}         
  \end{tikzpicture} 
\caption{\footnotesize Graphs of $\n$ (plain) and $\r(n)$ (dotted) where $\n$ solves  problem \eqref{eq:rescaled_pos_temp}
for several values of $\eps$, $n_\ast$ being fixed and $\tau=n_\ast/r_\ast$ being chosen strictly less 
than $\tau_\#(\kappa)$ (here, $\tau=0.3\,\tau_\#$). The pressure law is the $\gamma$-law \eqref{eq:gammalaw}
with exponent $\gamma=2$.} 
\label{fig:prof_nr} 
\end{figure}

\begin{figure}
  \tikzset{mark size=3.5}  
  \begin{tikzpicture}[thick,scale=0.5, every node/.style={scale=1.3}] 
    \input{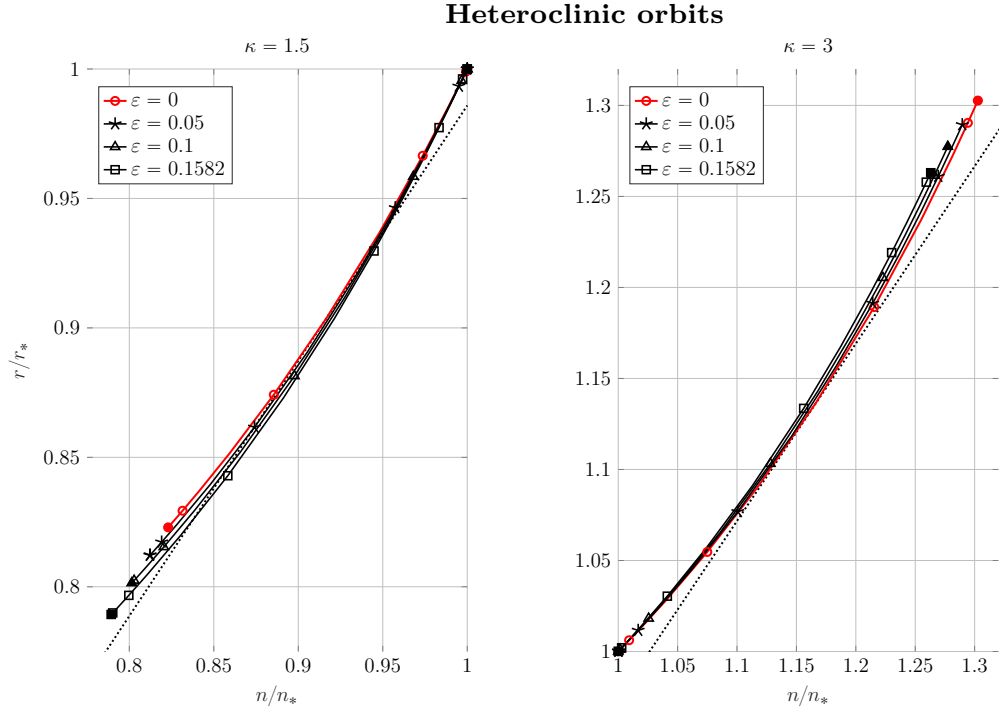}         
  \end{tikzpicture} 
\caption{\footnotesize Orbits connecting $(\n_\times,\r(\n_\times))$ to $(1,1)$ for several values of $\eps$
for the $\gamma$-law \eqref{eq:gammalaw} with exponent $\gamma=2$.
The straight line $n\mapsto \tau_\#(\kappa) n$ is plotted for reference, and the tangent point with the corresponding
orbit is indicated (markers as in Figure \ref{fig:prof_nr}).}
\label{fig:orbits_nr} 
\end{figure} 

\begin{rmk}\rm
The formation of viscous profiles joining monotonically the equilibrium values is shown in Figure~\ref{fig:prof_nr},
while Figure~\ref{fig:orbits_nr} represents the corresponding heteroclinic orbits in the $(n,r)$ plane.
The fact that $\n_\times^\eps<\n_\times$ for small values of $\eps$ is showing well. 
 
These numerical results are given with a purpose reduced to an illustration of the previous discussion,
showing a computational evidence for the existence of viscous shock profiles. 
However, the apparent convexity of the orbits is worth investigating, as is the fact that the sign of $\n_1$
seems to imply that, if $\kappa=3$, $\tau$ might be chosen closer to $\tau_\#$.
\end{rmk}

Capturing viscous profiles is very sensitive because it requires  the determination of the 
equilibrium value with high accuracy.
Again, the resolution of the differential system should be performed with a high-order method
in order to capture the profile.
A thorough numerical investigation will be presented  elsewhere, addressing in further details
the computational difficulties and the role of the parameters of the model.

\bibliographystyle{siam}
\bibliography{./FPS}

\end{document}